\def\input@path{{figures/}}\makeatother
\newtheorem{theorem}{Theorem}[section]
\newtheorem{corollary}[theorem]{Corollary}
\newtheorem{proposition}[theorem]{Proposition}
\newtheorem{lemma}[theorem]{Lemma}
\newtheorem*{theorem*}{Theorem}
\theoremstyle{definition}
\newtheorem{definition}[theorem]{Definition}
\newtheorem{example}[theorem]{Example}
\newtheorem{remark}[theorem]{Remark}
\newtheorem{notation}[theorem]{Notation}
\newtheorem{convention}[theorem]{Convention}
\newcommand{\R}{\mathbb{R}} 
\newcommand{\Z}{\mathbb{Z}} 
\renewcommand{\b}[1]{\mathbf{#1}} 
\newcommand{\set}[2]{\left\{ #1 \;\middle|\; #2 \right\}} 
\newcommand{\bigset}[2]{\big\{ #1 \;\big|\; #2 \big\}} 
\newcommand{\ssm}{\smallsetminus} 
\newcommand{\dotprod}[2]{\left\langle \, #1 \; \middle| \; #2 \, \right\rangle} 
\newcommand{\eqdef}{\mbox{\,\raisebox{0.2ex}{\scriptsize\ensuremath{\mathrm:}}\ensuremath{=}\,}} 
\newcommand{\transpose}[1]{{#1}^t} 
\newcommand{\fref}[1]{Figure~\ref{#1}} 
\newcommand{\ie}{\textit{i.e.}~} 
\newcommand{\eg}{\textit{e.g.}~} 
\newcommand{\aka}{\textit{a.k.a.}~} 
\definecolor{darkblue}{rgb}{0,0,0.7} 
\definecolor{green}{RGB}{57,181,74} 
\definecolor{violet}{RGB}{147,39,143} 
\newcommand{\red}{\color{red}} 
\newcommand{\green}{\color{green}} 
\newcommand{\darkblue}{\color{darkblue}} 
\newcommand{\defn}[1]{\textsl{\darkblue #1}} 
\newcommand{\para}[1]{\medskip\noindent\textit{#1.}} 
\newcommand*\circled[1]{\tikz[baseline=(char.base)]{\node[shape=circle,draw,inner sep=1pt, scale=.7] (char) {#1};}}
\newcommand{\blossom}{^\text{\ding{96}}} 
\newcommand{\Enrs}[1]{E_{nr}^{s}(#1)}
\newcommand{\Ers}[1]{E_{r}^{s}(#1)}
\newcommand{\Enrt}[1]{E_{nr}^{t}(#1)}
\newcommand{\Ert}[1]{E_{r}^{t}(#1)}
\newcommand{\peaks}[1]{\mathsf{peaks}(#1)} 
\newcommand{\deeps}[1]{\mathsf{deeps}(#1)} 
\newcommand{\distinguishedWalk}[2]{\mathsf{dw}(#1,#2)} 
\newcommand{\distinguishedArrows}[2]{\mathsf{da}(#1,#2)} 
\newcommand{\distinguishedString}[2]{\mathsf{ds}(#1,#2)} 
\newcommand{\distinguishedSign}[2]{\varepsilon(#1,#2)} 
\newcommand{\kn}{\kappa} 
\newcommand{\KN}{\textsc{kn}} 
\newcommandx{\NKC}[1][1=\bar Q]{\mathcal{K}_{\mathrm{nk}}(#1)} 
\newcommandx{\RNKC}[1][1=\bar Q]{\mathcal{C}_{\mathrm{nk}}(#1)} 
\newcommandx{\NKL}[1][1=\bar Q]{\mathcal{L}_{\mathrm{nk}}(#1)} 
\newcommandx{\NKG}[1][1=\bar Q]{\mathcal{G}_{\mathrm{nk}}(#1)} 
\newcommandx{\NFC}[1][1=\bar Q]{\mathcal{C}_{\mathrm{nf}}(#1)} 
\newcommand{\peak}{\mathrm{peak}} 
\newcommand{\deep}{\mathrm{deep}} 
\renewcommand{\top}{\mathrm{top}} 
\newcommand{\bottom}{\mathrm{bot}} 
\newcommand{\walk}{\operatorname{\omega}} 
\newcommand{\surface}{\mathcal{S}} 
\newcommand{\dual}{^*} 
\newcommand{\dissection}{\mathrm{D}} 
\newcommand{\vertices}{\mathcal{V}} 
\newcommand{\edges}{\mathcal{E}} 
\newcommand{\faces}{\mathcal{F}} 
\renewcommandx{\AC}[1][1=\dissection]{\mathcal{K}_{\mathrm{acc}}(#1)} 
\newcommandx{\RAC}[1][1=\dissection]{\mathcal{C}_{\mathrm{acc}}(#1)} 
\newcommandx{\SC}[1][1=\dissection\dual]{\mathcal{K}_{\mathrm{sla}}(#1)} 
\newcommandx{\RSC}[1][1=\dissection\dual]{\mathcal{C}_{\mathrm{sla}}(#1)} 
\newcommandx{\NCC}[1][1={\dissection, \dissection\dual}]{\mathcal{K}_{\mathrm{nc}}(#1)} 
\newcommandx{\RNCC}[1][1={\dissection, \dissection\dual}]{\mathcal{C}_{\mathrm{nc}}(#1)} 
\newcommand{\curveof}{\operatorname{\gamma}} 
\newcommand{\edgeof}{\operatorname{\varepsilon}} 
\newcommand{\dualedgeof}{\operatorname{\varepsilon}\dual} 
\DeclareRobustCommand{\SSS}{\reflectbox{$\mathsf{Z}$}} 
\DeclareRobustCommand{\ZZZ}{\mathsf{Z}} 
\newcommand{\vnext}[1]{#1_{\operatorname{next}}} 
\newcommand{\vprevious}[1]{#1_{\operatorname{prev}}} 
\newcommand{\dvector}[1]{\mathbf{d}(#1)} 
\newcommand{\dvectors}[1]{\mathbf{d}(#1)} 
\newcommandx{\dvectorFan}[1][1=\bar Q]{\mathcal{F}^\mathbf{d}(#1)} 
\newcommand{\gvector}[1]{\mathbf{g}(#1)} 
\newcommand{\gvectors}[1]{\mathbf{g}(#1)} 
\newcommandx{\gvectorFan}[1][1=\bar Q]{\mathcal{F}^\mathbf{g}(#1)} 
\newcommand{\cvector}[2]{\mathbf{c}(#1 \in #2)} 
\newcommand{\cvectors}[1]{\mathbf{c}(#1)} 
\newcommandx{\allcvectors}[1][1=\bar Q]{\mathbf{C}(#1)} 
\newcommandx{\cvectorFan}[1][1=\bar Q]{\mathcal{F}^\mathbf{c}(#1)} 
\newcommand{\point}[1]{\mathbf{p}(#1)} 
\newcommand{\HS}[1]{\mathbf{H}^{\le}(#1)} 
\newcommandx{\Asso}[2][1=\bar Q,2={}]{\mathsf{Asso}^{#2}(#1)} 
\newcommandx{\Zono}[2][1=\bar Q,2={}]{\mathsf{Zono}^{#2}(#1)} 
\newcommand{\multiplicityVector}{\b{m}} 
\newcommandx{\AR}[1][1=\bar Q]{\mathrm{AR}(#1)} 
\newcommandx{\tTC}[1][1=\bar Q]{\mathcal{K}^{\textrm{s$\tau$-tilt}}(#1)} 
\newcommand{\koszul}{^!} 
\def\l@section{\@tocline{1}{2pt}{0pc}{}{}}
\let\oldtocpart=\tocpart
\renewcommand{\tocpart}[2]{\bf\large\oldtocpart{#1}{#2}}
\let\oldtocsection=\tocsection
\renewcommand{\tocsection}[2]{\bf\oldtocsection{#1}{#2}}
\title[Non-kissing and non-crossing complexes for locally gentle algebras]{Non-kissing and non-crossing complexes \\ for locally gentle algebras}
\thanks{YP, VP and PGP were partially supported by the French ANR grant SC3A~(15\,CE40\,0004\,01). VP was partially supported by the French ANR grant CAPPS~(17\,CE40\,0018).}
\author{Yann Palu}
\address[Yann Palu]{LAMFA, Universit\'e Picardie Jules Verne, Amiens}
\email{yann.palu@u-picardie.fr}
\urladdr{\url{http://www.lamfa.u-picardie.fr/palu/}}
\author{Vincent Pilaud}
\address[Vincent Pilaud]{CNRS \& LIX, \'Ecole Polytechnique, Palaiseau}
\email{vincent.pilaud@lix.polytechnique.fr}
\urladdr{\url{http://www.lix.polytechnique.fr/~pilaud/}}
\author{Pierre-Guy Plamondon}
\address[Pierre-Guy Plamondon]{Laboratoire de Math\'ematiques d'Orsay, Universit\'e Paris-Sud, CNRS, Universit\'e Paris-Saclay}
\email{pierre-guy.plamondon@math.u-psud.fr}
\urladdr{\url{https://www.math.u-psud.fr/~plamondon/}}
\begin{document}

\begin{abstract}
Starting from a locally gentle bound quiver, we define on the one hand a simplicial complex, called the non-kissing complex.
On the other hand, we construct a punctured, marked, oriented surface with boundary, endowed with a pair of dual dissections.
From those geometric data, we define two simplicial complexes: the accordion complex, and the slalom complex, generalizing work of A. Garver and T. McConville in the case of a disk.
We show that all three simplicial complexes are isomorphic, and that they are pure and thin.
In particular, there is a notion of mutation on their facets, akin to $\tau$-tilting mutation.
Along the way, we also construct inverse bijections between the set of isomorphism classes of locally gentle bound quivers and the set of homeomorphism classes of punctured, marked, oriented surfaces with boundary, endowed with a pair of dual dissections.
\end{abstract}

\maketitle

\section{Introduction}
The aim of this paper is to prove that two combinatorial objects, called the \emph{non-kissing complex} and the \emph{non-crossing complex}, are isomorphic.
Both complexes appeared in different works in specific cases: the non-kissing complex of a grid appeared in \cite{McConville}, while the non-crossing complex of a disk appeared in \cite{GarverMcConville, MannevillePilaud-accordion}.
It was shown in \cite{PaluPilaudPlamondon} that these complexes are special cases of a more general simplicial complex, defined for any gentle algebra (see also~\cite{BrustleDouvilleMousavandThomasYildirim}).

In order to unify these two objects, we are lead to introduce two generalizations.
On the algebraic side, the non-kissing complex is extended from the class of gentle algebras to that of locally gentle algebras (which are an infinite-dimensional version of gentle algebras).
On the geometric side, we construct the non-crossing complex of an arbitrary oriented punctured surface endowed with a pair of dual dissections.

The two main objects of our study will thus be locally gentle algebras and dissections of surfaces.
Our first main result is that these two classes of objects are essentially the same.

\begin{theorem*}[\ref{thm:bijectionLocallyGentleAndSurfaces}]
There is an explicit bijection between the set of isomorphism classes of locally gentle bound quivers and the set of homeomorphism classes of oriented punctured marked surfaces with boundary endowed with a pair of dual cellular dissections.
\end{theorem*}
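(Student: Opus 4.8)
The plan is to exhibit explicit maps in both directions, $\Phi$ from locally gentle bound quivers to dissected surfaces and $\Psi$ in the reverse direction, and to prove that they descend to mutually inverse bijections on iso\,/\,homeomorphism classes. The bridge between the two sides is the notion of a ribbon graph (a graph together with a cyclic ordering of the half-edges at each vertex): on the one hand a locally gentle bound quiver $\bar Q = (Q,I)$ carries a canonical ribbon structure read off from its relations, and on the other hand a cellularly dissected oriented surface is determined up to homeomorphism by the ribbon graph traced out by its arcs.

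For $\Phi$, the idea is to turn $\bar Q$ into a ribbon graph $R$ whose edges are the vertices of $Q$ and whose vertices come in two colors. The gentle axioms guarantee that at each vertex of $Q$ there are at most two incoming and two outgoing arrows, and that each arrow lies in at most one relation of $I$ and avoids at most one composable arrow; this is precisely the data needed to cyclically order, at each endpoint, the arcs that will represent the vertices of $Q$. Concretely, an arrow $a\colon i\to j$ records that the arcs $i$ and $j$ are consecutive around a common marked point, the color of that point being dictated by whether the corresponding length-two path lies in $I$ (a \emph{relation}) or not (an avoided relation). Thickening $R$ into its ribbon surface $\surface$ produces an oriented surface with boundary in which the arcs indexed by the vertices of $Q$ form a dissection $\dissection$, while the marked points of the other color, together with their incident arcs, assemble into the dual dissection $\dissection\dual$. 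One then checks that $\dissection$ and $\dissection\dual$ are cellular and dual: cellularity follows from the defining property of the ribbon surface (its complementary regions are disks), and duality from the fact that the two colorings of marked points interchange the roles of faces and vertices.

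For $\Psi$, the idea is the reverse dictionary. Given $\surface$ with dual cellular dissections $(\dissection,\dissection\dual)$, declare the vertices of $Q$ to be the arcs of $\dissection$ (equivalently, by duality, the arcs of $\dissection\dual$). Put an arrow $i\to j$ whenever the arcs $i$ and $j$ are consecutive, in the cyclic order induced by the orientation, around a shared endpoint; impose the relation $i\to j\to \ell$ exactly when the three arcs are consecutive around a vertex of $\dissection$, and leave it as an avoided relation when they are consecutive around a vertex of $\dissection\dual$. The bounded-degree and unique-continuation conditions of the gentle axioms then follow directly from the cellularity of the two dissections (each arc has two endpoints, and at each endpoint the dual structure pairs a single relation-continuation with a single avoided one), so that $(Q,I)$ is locally gentle.

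The last step is to prove that $\Phi$ and $\Psi$ are well defined on equivalence classes and mutually inverse. Well-definedness amounts to checking that isomorphisms of bound quivers correspond to relabelings of arcs and that homeomorphisms preserve the dissection data, both of which are formal once the constructions are set up, and that the cyclic orderings built in $\Phi$ are canonical (independent of auxiliary choices). The composite $\Psi\circ\Phi$ returns the original quiver because reading consecutivities of arcs off the ribbon surface inverts the cyclic orderings used to build $R$. The main obstacle is the reverse composite $\Phi\circ\Psi$, which must recover the surface \emph{up to homeomorphism}: here one invokes the classification of oriented surfaces and argues that cellularity guarantees that no topological information is hidden away in a non-disk region, so that genus, orientability, and the number and type of boundary components and punctures are all determined by the local gluing data encoded in the ribbon graph. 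A final point requiring care, absent from the classical finite-dimensional case, is that a locally gentle algebra may be infinite-dimensional and its quiver may contain oriented cycles; one must therefore allow the arc set to be infinite and verify that the ribbon-surface construction remains valid under the local finiteness imposed by the gentle axioms, yielding a well-defined (possibly non-compact) surface.
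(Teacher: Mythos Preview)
Your overall strategy---construct explicit maps $\Phi$ and $\Psi$ and verify they are mutually inverse on equivalence classes---is exactly the paper's, and your $\Psi$ (arcs become vertices, angles become arrows) is essentially Definition~\ref{def:quiverDualDissections}. Your $\Phi$ via a bicolored ribbon graph is a legitimate alternative to the paper's construction, which instead glues one lozenge $L(\alpha)$ per arrow of the blossoming quiver~$\bar Q\blossom$ (Definition~\ref{def:surfaceQuiver}). The lozenge picture has the practical advantage that both dissections $\dissection_{\bar Q}$ and ${\dissection\dual\!\!_{\bar Q}}$ appear simultaneously as the green and red sides of the lozenges, whereas in your ribbon-graph sketch you still owe an explanation of where the arcs of $\dissection\dual$ come from (``the marked points of the other color, together with their incident arcs'' is not right: the arcs of $\dissection\dual$ are new edges, one crossing each arc of $\dissection$, not the same arcs regrouped).

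There are two concrete problems.

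First, your $\Psi$ has the relations backwards. If arcs $i,j,\ell$ of $\dissection$ are consecutive around a common endpoint $v\in V$, then the arrows $i\to j$ and $j\to\ell$ record successive angles at the \emph{same} vertex $v$, and their composition is the \emph{allowed} path, not a relation. The relation $\alpha\beta\in I$ arises when the three arcs are consecutive along the boundary of a \emph{face} of $\dissection$ (equivalently, around a vertex of $\dissection\dual$); see Definition~\ref{def:quiverDualDissections}\,(iii). As written, your $\Psi$ returns the Koszul dual $\bar Q\koszul$ rather than $\bar Q$ (compare Proposition~\ref{prop:dualityKoszul1}), so your $\Psi\circ\Phi$ computation would not close without a further swap.

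Second, and more seriously, your closing paragraph misreads the word ``locally''. A locally gentle bound quiver is by definition a \emph{finite} bound quiver (Definition~\ref{def:gentleQuiver}); ``locally'' only means that one drops the requirement that $kQ/I$ be finite-dimensional, i.e.\ one allows oriented cycles $c$ with $c,c^2\notin I$. The quiver has finitely many vertices and arrows, the blossoming quiver is finite, and the surface---built from finitely many lozenges---is compact. There is no infinite arc set and no non-compact surface to worry about. The genuine new feature in passing from gentle to locally gentle is the appearance of \emph{punctures}: each primitive oriented cycle in $\bar Q$ (equivalently, each infinite straight walk) contributes a puncture in $V_{\bar Q}$, and dually for $\bar Q\koszul$; see Remark~\ref{rem:propertiesSurface}\,(i) and~(v). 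Your proposal neither identifies nor addresses this, so the ``main obstacle'' you flag is a phantom, while the actual subtlety is missing.
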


On the algebraic side, we extend the study of walks from the gentle case~\cite{McConville, PaluPilaudPlamondon} to the locally gentle case, and define a notion of compatibility called non-kissing (see Section \ref{sec:nonKissingComplex}).
On the geometric side, we extend the study of dissections, accordions and slaloms from the case of the disk~\cite{GarverMcConville, MannevillePilaud-accordion} to the case of an arbitrary surface, and define a notion of compatibility called non-crossing (see Section \ref{sec:accordionSlalomNonCrossingComplexes}).
The combinatorial information contained in these notions is encoded in simplicial complexes: the non-kissing complex~$\NKC$ and the non-crossing complex~$\NCC$.

\begin{theorem*}[\ref{thm:complexesCoincide}]
The complexes~$\NKC$ and~$\NCC$ are isomorphic.
\end{theorem*}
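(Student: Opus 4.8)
The plan is to build an explicit bijection between the vertex sets of $\NKC$ and $\NCC$ and then show that it carries the non-kissing relation to the non-crossing relation, so that the two simplicial complexes coincide as abstract complexes. The natural source of such a bijection is Theorem~\ref{thm:bijectionLocallyGentleAndSurfaces}: a locally gentle bound quiver $\bar Q$ corresponds to an oriented punctured marked surface $(\surface, \dissection, \dissection\dual)$ with a pair of dual cellular dissections. Under this correspondence, the vertices of the non-kissing complex are the (suitable, \eg\ non-self-kissing) walks on the bound quiver, while the vertices of the non-crossing complex are the accordions (equivalently slaloms) on the surface. So the first step is to describe, for each walk $\walk$ on $\bar Q$, the curve $\curveof(\walk)$ on $\surface$ that it traces out, using the local dictionary between arrows of $\bar Q$ and segments of the dissection supplied by the proof of Theorem~\ref{thm:bijectionLocallyGentleAndSurfaces}. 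I would verify that this assignment $\walk \mapsto \curveof(\walk)$ is a bijection from the vertices of $\NKC$ to the vertices of $\NCC$, and that its inverse reads off from an accordion/slalom the corresponding walk in $\bar Q$.

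The second step is the heart of the argument: translating ``kissing'' into ``crossing'' pointwise. I would show that two walks $\walk_1, \walk_2$ kiss if and only if the corresponding curves $\curveof(\walk_1), \curveof(\walk_2)$ cross on the surface. The plan is to proceed locally, face by face of the dissection. A kiss between two walks is a local configuration where one walk runs alongside another and then ``turns away'' on the same side; under the dictionary this should correspond exactly to two curves entering a common face of $\dissection$ (or its dual) and separating its boundary marked points in an interleaving (crossing) pattern. I would set up the combinatorial model of a crossing inside a single polygonal face, match it against the combinatorial definition of a kiss between two strings/walks, and check the equivalence in each local case. Because both notions are defined by purely local data along the shared portion of the two objects, establishing the equivalence on each face and then patching the local statements together should suffice.

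The main obstacle I anticipate is the bookkeeping forced by the \emph{locally} gentle (infinite-dimensional) setting and by the presence of \emph{punctures} and of the \emph{pair of dual} dissections. In the finite gentle/disk case treated in \cite{GarverMcConville, PaluPilaudPlamondon} the walks are finite and the surface is a disk, so crossings can be counted directly; here walks may be infinite (biinfinite or one-sided infinite along a band/cyclic part of the algebra), and one must make sense of curves $\curveof(\walk)$ that wind infinitely and of their crossings, presumably up to the appropriate homotopy that respects the punctures. I would handle this by reducing every (non-)crossing test to a finite window: two curves cross iff they cross in some finite portion, and two walks kiss iff they kiss along a finite common substring, so the infinite behaviour never contributes a genuinely new crossing or kiss. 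Care is also needed because crossings must be detected with respect to \emph{both} dissections $\dissection$ and $\dissection\dual$ simultaneously (this is what the non-crossing complex of the dual pair records), so the local analysis should be carried out so that a kiss on the quiver matches a crossing with respect to whichever of $\dissection$, $\dissection\dual$ governs that local piece.

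Once the vertex bijection is established and shown to intertwine kissing with crossing, the isomorphism of complexes is immediate: a subset of vertices is a face of $\NKC$ (pairwise non-kissing) precisely when its image is a face of $\NCC$ (pairwise non-crossing). I would therefore organize the write-up as (i) the curve construction $\curveof$ and its bijectivity on vertices, (ii) the local kiss $\Leftrightarrow$ crossing equivalence with the finite-window reduction, and (iii) the one-line conclusion that $\curveof$ is a simplicial isomorphism $\NKC \cong \NCC$.
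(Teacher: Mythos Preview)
Your plan is essentially the paper's own proof: it proves the theorem by combining the bijection between walks and $\dissection_{\bar Q}$-accordions (Proposition~\ref{prop:walks=arcs}), the equivalence kissing~$\Leftrightarrow$~crossing (Lemma~\ref{lem:nonKissing=nonCrossing}), and the identification of accordions with slaloms (Proposition~\ref{prop:accordionsSlaloms}); the second direction then follows from Theorem~\ref{thm:bijectionLocallyGentleAndSurfaces}.

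Two of your anticipated obstacles are not actually obstacles, and simplifying them will streamline your write-up. First, the ``finite-window reduction'' is already built into Definition~\ref{def:kiss}: a kiss is by definition a \emph{finite} common substring in $\Sigma_\top(\omega)\cap\Sigma_\bottom(\omega')$, so infinite (eventually cyclic) walks cause no new difficulty on the quiver side, and on the surface side a crossing of curves is always a transversal intersection at a point. Second, you do \emph{not} need to detect crossings ``with respect to both $\dissection$ and $\dissection\dual$'': by Definition~\ref{def:noncrossingComplex} and Proposition~\ref{prop:accordionsSlaloms}, the non-crossing complex $\NCC$ is simply the complex of pairwise non-crossing $\dissection$-accordions (equivalently $\dissection\dual$-slaloms), and ``crossing'' means ordinary intersection of curves on the surface (Definition~\ref{def:crossingCurves}), not a dissection-dependent notion. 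So step~(ii) reduces to the single pictorial argument of Lemma~\ref{lem:nonKissing=nonCrossing}: draw the local configuration of a kiss and observe that it forces a transversal crossing of $\curveof(\omega)$ and $\curveof(\omega')$, and conversely.
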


Finally, we show in Section \ref{sec:propertiesOfComplexes} that these complexes are combinatorially very well-behaved.

\begin{theorem*}[\ref{prop:purity} and \ref{coro:thin}]
The complexes~$\NKC$ and~$\NCC$ are pure and thin.  Their dimension is computed, and mutation is explicitly described.
\end{theorem*}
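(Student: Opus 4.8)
The plan is to exploit the isomorphism of Theorem~\ref{thm:complexesCoincide} and prove each of the three assertions on whichever of the two models is the most transparent, transporting the conclusion to the other model for free. Purity together with the dimension count is cleanest on the geometric side, while thinness (and the resulting mutation) is most naturally phrased as a flip of arcs, and I would exploit this freedom throughout.

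First, I would establish purity and compute the dimension on the non-crossing complex~$\NCC$. A facet is a maximal collection of pairwise non-crossing accordions (equivalently slaloms), and the key step is to identify such a maximal collection with a full subdissection refining the cell structure of~$\surface$ into \emph{minimal} cells: a collection of pairwise non-crossing arcs cuts the surface into cells, and it is a facet precisely when no further compatible arc can be inserted, that is, when every cell is minimal. Since every minimal cell contributes the same amount to the Euler relation $V - E + F = \chi(\surface)$, the number~$E$ of internal arcs of a maximal dissection is forced, and it depends only on the genus, the numbers of boundary components and punctures, and the boundary marked points. This simultaneously yields purity — all facets have the same cardinality — and the explicit dimension formula, obtained by solving the Euler relation for~$E$.

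Second, for thinness I would analyze the near-facets. A face of codimension one inside a facet corresponds to a dissection in which every cell is minimal except for a single exceptional region~$R$, and the statement to prove is that~$R$ can be split by one additional non-crossing arc in exactly two ways. Locally this reduces to reading off the boundary word of~$R$ and checking that the only admissible refinements compatible with the surrounding dissection are the two ``diagonals'' of~$R$. Establishing that these two completions always exist and that there is no third one is precisely the thinness (hence mutation) statement, and composing the two refinements produces the explicit description of the flip $\gamma \mapsto \gamma'$.

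The main obstacle is exactly this local analysis of the exceptional cell~$R$ on a general punctured surface. On a disk it is the classical quadrilateral flip, but with positive genus, several boundary components, or punctures one must rule out degenerate configurations — for instance a region wrapping around a handle or enclosing a puncture — and confirm that the boundary pattern of~$R$ always admits exactly two admissible diagonals. I expect the cleanest route around this to be transporting the question through Theorem~\ref{thm:complexesCoincide} to the non-kissing side, where, given a walk~$\gamma$ in a facet, the two candidate replacement walks can be written down explicitly by a local surgery at the kissing sites; this certifies existence and uniqueness uniformly across all the surfaces produced by Theorem~\ref{thm:bijectionLocallyGentleAndSurfaces} and renders the mutation combinatorially explicit.
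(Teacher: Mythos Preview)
Your purity argument on the geometric side has a real gap. Accordions are not arbitrary $B$-arcs: they must satisfy the accordion condition relative to~$\dissection$, so a facet of~$\NCC$ is a maximal family of pairwise non-crossing \emph{accordions}, not a maximal family of arcs. In particular there is no reason for it to be a triangulation, and your sentence ``every minimal cell contributes the same amount to the Euler relation'' is precisely the statement that needs proof. You never say what a minimal cell for an accordion dissection looks like, and this is not obvious: the answer depends on how~$\dissection$ sits inside the cell. (In the paper's geometric proof of Lemma~\ref{lemm:facesAreBounded} a facet is only embedded into a tagged triangulation as a \emph{partial} one, which suffices for finiteness but not for a dimension count.) Without this identification the Euler argument does not start.

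The paper proves purity entirely on the non-kissing side, by a different mechanism. The key tool is the countercurrent order~$\prec_\alpha$ of Definition~\ref{def:orderForWalks}: for each arrow~$\alpha\in Q_1\blossom$ there is a unique \emph{distinguished walk} in a facet~$F$, namely the $\prec_\alpha$-maximum among all walks of~$F$ marked at~$\alpha$. The substantial step is Proposition~\ref{prop:numberOfDistinguishedArrows}: every bending walk in~$F$ carries exactly two distinguished arrows, every finite straight walk exactly one, and every infinite straight walk none. Double-counting the arrows of~$Q\blossom$ against the walks of~$F$ then forces the number~$b$ of bending walks to satisfy $2b + s = |Q_1\blossom| = 4|Q_0|-|Q_1|$ with $s=2|Q_0|-|Q_1|$, whence~$b=|Q_0|$. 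This never touches the cell structure of a facet.

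For thinness your instinct to pass to the non-kissing side is correct and matches the paper, but ``local surgery at the kissing sites'' is not the right picture: the walks of a facet do not kiss one another, so there are no kissing sites to operate on. The replacement walk is again built from the countercurrent order. The two distinguished arrows of the walk~$\omega$ being removed delimit a distinguished substring~$\sigma$ with~$\omega=\rho\sigma\tau$, and the flip replaces~$\omega$ by~$\omega'=\rho'\sigma\tau'$, where~$\rho'$ and~$\tau'$ are read off the distinguished walks of~$F\ssm\{\omega\}$ at the two arrows adjacent to the endpoints of~$\sigma$ (Proposition~\ref{prop:flip}). So the same machinery that yields purity also produces the explicit mutation; the geometric $\SSS/\ZZZ$ description (Proposition~\ref{prop:mutationAccordions}) is then derived from this, rather than the other way around.
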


We end in Section \ref{sec:vectors} with a discussion of~$\b{g}$-vectors,~$\b{c}$-vectors and~$\b{d}$-vectors associated to walks and curves.
Consequences on the representation theory of locally gentle algebras will be investigated in a future project.

Finally, let us briefly review the algebraic and geometric objects which appear in this paper.

Gentle algebras are a class of finite-dimensional associative algebras over a field defined by generators and relations.
Their representation theory was first systematically investigated in~\cite{ButlerRingel}, and they have been thouroughly studied since.
Locally gentle algebras are obtained by dropping the requirement that gentle algebras be finite-dimensional.
It turns out that their representations theory is also well-behaved \cite{Crawley-Boevey} and that these algebras are Koszul \cite{BessenrodtHolm}.
Recently, the $\tau$-tilting theory \cite{AdachiIyamaReiten} of gentle algebra has been studied in \cite{PaluPilaudPlamondon, BrustleDouvilleMousavandThomasYildirim}.

Dissections of surfaces, on the other hand, are certain collections of pairwise non-intersecting curves on an orientable surface.
They have been defined and studied in \cite{Baryshnikov, Chapoton-quadrangulations, GarverMcConville, MannevillePilaud-accordion} in the case where the surface is an unpunctured disk.

The idea of associating a finite-dimensional algebra to a dissection (or a triangulation) of a surface seems to take its roots in the theory of cluster algebras and cluster categories.
This was first done for triangulations of polygons in \cite{CalderoChapotonSchiffler}, and then for any orientable surface with boundary in \cite{ABCP, Labardini}.  
The algebra of a dissection as we shall use it in this paper has appeared in \cite{DavidRoeslerSchiffler}.
In most of the above cases, the algebras obtained are gentle algebras.
It has also been shown in \cite{BaurCoelhoSimoes} that any gentle algebra is obtained from a dissection of a surface, and that the module category of the algebra can be interpreted by using curves on the surface.
In the case where the surface is a polygon, the $\tau$-tilting theory of the algebra of a dissection has been studied in \cite{PaluPilaudPlamondon, PilaudPlamondonStella}.

Conversely, the construction of a surface associated to a gentle algebra has appeared in \cite{OpperPlamondonSchroll}.
We give a different construction of the same surface in this paper, which is obtained by ``glueing'' quadrilaterals to the arrows of the blossoming quiver (as defined in \cite{PaluPilaudPlamondon}, and called ``framed quiver'' in \cite{BrustleDouvilleMousavandThomasYildirim}).
Our construction has the advantage that it easily yields the two dual dissections of the surface at the same time (the dissection and dual lamination of \cite{OpperPlamondonSchroll}).
Note that our dissections are always cellular, while those in \cite{BaurCoelhoSimoes} can be arbitrary.
Remarkably, gentle algebras and surfaces were linked recently in \cite{HaidenKatzarkovKontsevich, LekiliPolishchuk}, where the Fukaya category of the surface is shown to be equivalent to the bounded derived category of the associated gentle~algebra.

\section*{Acknowledgements}
We thank Sibylle Schroll for discussions on geometric models for gentle algebras, Salvatore Stella for discussions on accordion complexes of dissections of surfaces, and William Crawley-Boevey and Henning Krause for discussions on the representation theory of infinite dimensional algebras.

\section{Non-kissing complex}\label{sec:nonKissingComplex}

\enlargethispage{.2cm}
We first quickly review the definition of the non-kissing complex of a gentle bound quiver following the presentation of~\cite{PaluPilaudPlamondon} and extend it to locally gentle bound quivers.

\subsection{Locally gentle bound quivers and their blossoming quivers}

We consider a \defn{bound quiver}~$\bar Q = (Q,I)$, formed by a finite quiver~$Q$ and an ideal~$I$ of the path algebra~$kQ$ (the~$k$-vector space generated by all paths in~$Q$, including vertices as paths of length zero, with multiplication induced by concatenation of paths) such that~$I$ is generated by linear combinations of paths of length at least two.
Note that we \textbf{do not require} that the quotient algebra~$kQ/I$ be finite dimensional.
The following definition is adapted from~\cite{ButlerRingel}.

\begin{definition}
\label{def:gentleQuiver}
A \defn{locally gentle bound quiver}~$\bar Q \eqdef (Q,I)$ is a (finite) bound quiver~where
\begin{enumerate}[(i)]
\item each vertex~$a \in Q_0$ has at most two incoming and two outgoing arrows,
\item the ideal~$I$ is generated by paths of length exactly two,
\item for any arrow~$\beta \in Q_1$, there is at most one arrow~$\alpha \in Q_1$ such that~$t(\alpha) = s(\beta)$ and~${\alpha\beta\notin I}$ (resp.~$\alpha\beta \in I$) and at most one arrow~$\gamma \in Q_1$ such that~$t(\beta) = s(\gamma)$~and~${\beta\gamma\notin I}$~(resp.~${\beta\gamma \in I}$).
\end{enumerate}
The algebra~$kQ/I$ is called a \defn{locally gentle algebra}.
A \defn{gentle bound quiver} is a locally gentle bound quiver~$\bar Q$ such that the algebra $kQ/I$ is finite-dimensional; the algebra is then called a \defn{gentle algebra}.
\end{definition}

\begin{definition}
\label{def:blossomingQuiver}
A locally gentle bound quiver~$\bar Q$ is \defn{complete} if any vertex~$a \in Q_0$ is incident to either one ($a$ is a leaf) or four arrows ($a$ is an internal vertex).
The \defn{pruned subquiver} of a quiver~$\bar Q$ is the locally gentle quiver obtained by deleting all leaves of~$\bar Q$ (degree one vertices) and their incident arrows.
The \defn{blossoming quiver} of a locally gentle bound quiver~$\bar Q$ is the complete locally gentle bound quiver~$\bar Q\blossom$ whose pruned subquiver is~$\bar Q$.
The vertices of~$Q\blossom_0 \ssm Q_0$ are called \defn{blossom vertices}, and the arrows in~$Q\blossom_1 \ssm Q_1$ are called \defn{blossom arrows}.
\end{definition}

In other words, $\bar Q\blossom$ is obtained from~$\bar Q$ by adding blossom arrow and blossom vertices at each incomplete vertex of~$\bar Q$ and by completing~$I$ accordingly.
Examples are represented on \fref{fig:quivers}.

\begin{figure}[t]
	\capstart
	\centerline{\includegraphics[scale=.6]{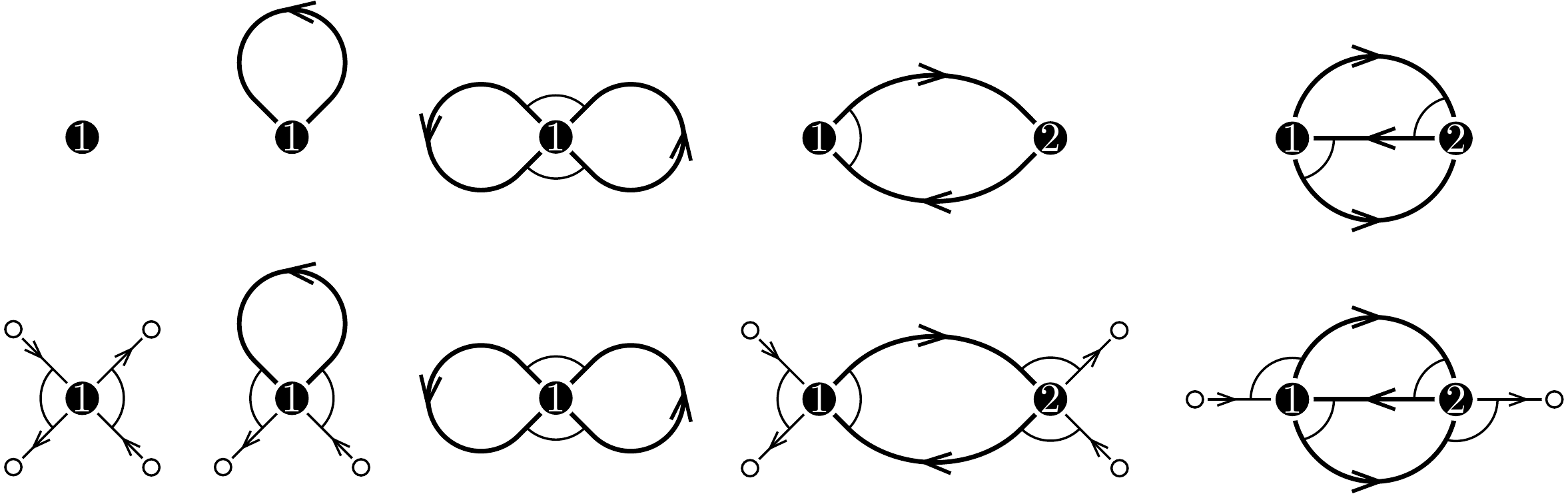}}
	\caption{Some locally gentle quivers (top) and their blossoming quivers (bottom). Initial vertices are solid and labeled while blossom vertices are hollow, and initial arrows are bold while blossom arrows are thin.}
	\label{fig:quivers}
\end{figure}

\begin{remark}
\label{rem:sizeBlossomingQuiver}
Note that~$\bar Q\blossom$ has~$2|Q_0|-|Q_1|$ incoming blossom arrows and~$2|Q_0|-|Q_1|$ outgoing blossom arrows.
Therefore, it has $|Q_0\blossom| = |Q_0| + 2(2|Q_0|-|Q_1|) = 5|Q_0|-2|Q_1|$ vertices and~${|Q_1\blossom| = |Q_1| + 2(2|Q_0|-|Q_1|) = 4|Q_0|-|Q_1|}$ arrows.
\end{remark}

\subsection{Strings and walks}

The non-kissing complex is constructed using the combinatorics of strings and walks in the quiver~$\bar Q$, whose definitions are now briefly recalled.
The terminology and notations in the following definitions is borrowed from~\cite{ButlerRingel, Crawley-Boevey}.

For any arrow~$\alpha$ of~$Q$, define a formal inverse~$\alpha^{-1}$ with the properties that~${s(\alpha^{-1}) = t(\alpha)}$, $t(\alpha^{-1}) = s(\alpha)$, $\alpha^{-1}\alpha = \varepsilon_{t(\alpha)}$ and~$\alpha\alpha^{-1} = \varepsilon_{s(\alpha)}$, where~$\varepsilon_v$ is the path of length zero starting and ending at the vertex~$v \in Q_0$.

\begin{definition}
\label{def:string}
Let~$\bar Q = (Q,I)$ be a locally gentle bound quiver.
A \defn{finite string} in~$\bar Q$ is a word of the form
\(
\rho = \alpha_1^{\varepsilon_1}\alpha_2^{\varepsilon_2}\cdots \alpha_\ell^{\varepsilon_\ell},
\)
where
	\begin{enumerate}[(i)]
	\item $\alpha_i \in Q_1$ and~$\varepsilon_i \in \{-1,1\}$ for all~$i \in [\ell]$,
	\item $t(\alpha_i^{\varepsilon_i}) = s(\alpha_{i+1}^{\varepsilon_{i+1}})$ for all~$i \in [\ell-1]$,
	\item there is no path~$\pi \in I$ such that~$\pi$ or~$\pi^{-1}$ appears as a factor of~$\rho$, and
	\item $\rho$ is reduced, in the sense that no factor~$\alpha\alpha^{-1}$ or~$\alpha^{-1}\alpha$ appears in~$\rho$, for~$\alpha \in Q_1$.
	\end{enumerate}
The integer~$\ell$ is called the \defn{length} of the string~$\rho$.
We let~$s(\rho) \eqdef s(\alpha_1^{\varepsilon_1})$ and~$t(\rho) \eqdef t(\alpha_\ell^{\varepsilon_\ell})$ denote the source and target of~$\rho$.
For each vertex~$a \in Q_0$, there is also a \defn{string of length zero}, denoted by~$\varepsilon_a$, that starts and ends at~$a$.
\end{definition}

\begin{definition}
An oriented cycle~$c$ in~$Q$ such that~$c, c^2 \notin I$ is called \defn{primitive} if it cannot be written as an $n$-th power ($n>1$) of a cycle.
\end{definition}

\begin{notation}
If~$c$ is an oriented cycle in~$Q$ such that~$c, c^2 \notin I$, we write:
\begin{itemize}
\item $c^\infty$ for the infinite word~$c c c\cdots$,
\item ${}^\infty c$ for the infinite word~$\cdots c c c$,
\item $^\infty c^\infty$ for the bi-infinite word~$^\infty c c^\infty$.
\end{itemize}
We also define~${}^{-\infty}c \eqdef {}^{\infty}(c^{-1}) = (c^\infty)^{-1}$ and similarly~$c^{-\infty} \eqdef (c^{-1})^\infty = (^\infty c)^{-1}$.
\end{notation}

\begin{definition}
An \defn{eventually cyclic string} for~$\bar Q$ is a word~$\rho$ of the form~${}^\infty(c_1^{\varepsilon_1})\sigma (c_2^{\varepsilon_2})^\infty$, where $c_1, c_2$ are oriented cycles in~$\bar Q$ (possibly of length zero) and $\varepsilon_1, \varepsilon_2$ are signs in~$\{\pm 1\}$ such that $c_1^{2\varepsilon_1} \sigma c_2^{2\varepsilon_2}$ is a finite string in~$\bar Q$.
\end{definition}

\begin{definition}
A \defn{string} for~$\bar Q$ is a word which is either a finite string or an eventually cyclic infinite string.
We often implicitly identify the two inverse strings~$\rho$ and~$\rho^{-1}$, and call it an \defn{undirected string} of~$\bar Q$.
\end{definition}

\begin{notation}
To avoid distinguishing between finite, infinite and bi-infinite words, we denote strings as products~$\rho = \prod_{i < \ell < j} \alpha_\ell$ where~$i < j \in \Z \cup \{\pm \infty\}$ and~$\alpha_\ell \in Q_0$ for all~$i < \ell < j$.
\end{notation}

\begin{definition}
\label{def:walk}
A \defn{walk} of a locally gentle quiver~$\bar Q$ is a maximal string of its blossoming quiver~$\bar Q \blossom$ (meaning that at each end it either reaches a blossom vertex of~$\bar Q \blossom$ or enters an infinite oriented cycle).
As for strings, we implicitly identify the two inverse walks~$\omega$ and~$\omega^{-1}$, and call it an \defn{undirected walk} of~$\bar Q$.
\end{definition}

\begin{definition}
\label{def:substrings}
A \defn{substring} of a walk~$\omega = \prod_{i < \ell < j} \alpha_\ell^{\varepsilon_\ell}$ of~$\bar Q$ is a string~$\sigma = \prod_{i' < \ell < j'} \alpha_\ell^{\varepsilon_\ell}$ of~$\bar Q$ for some~$i \le i' < j' \le j$, where the inequality~$i \le i'$ (resp.~$j' \le j$) is strict when~$i \ne -\infty$ (resp.~$j \ne \infty$). In other words, $\sigma$ is a factor of~$\omega$ such that
\begin{itemize}
\item the endpoints of~$\sigma$ are not allowed to be the possible blossom endpoints of~$\omega$,
\item the position of~$\sigma$ as a factor of~$\omega$ matters (the same string at a different position is considered a different substring).
\end{itemize}
Note that the string~$\varepsilon_a$ is a substring of~$\omega$ for each occurence of~$a$ as a vertex of~$\omega$.
We denote by~$\Sigma(\omega)$ the set of substrings of~$\omega$.
We use the same notation for undirected walks (of course, substrings of an undirected walk are undirected).
\end{definition}

\begin{definition}
\label{def:topBottom}
We say that the substring~$\sigma = \prod_{i' < \ell < j'} \alpha_\ell^{\varepsilon_\ell}$ is \defn{at the bottom} (resp.~\defn{on top}) of the walk~$\omega = \prod_{i < \ell < j} \alpha_\ell^{\varepsilon_\ell}$ if~$i'=-\infty$ or~$\varepsilon_{i'} = 1$ and~$j'=+\infty$ or~$\varepsilon_{j'} = -1$ (resp.~if~$i'=-\infty$ or~$\varepsilon_{i'} = -1$ and~$j'=\infty$ or~$\varepsilon_{j'} = 1$).
In other words the (at most) two arrows of~$\omega$ incident to the endpoints of~$\sigma$ point towards~$\sigma$ (resp.~outwards from~$\sigma$).
We denote by~$\Sigma_\bottom(\omega)$ and~$\Sigma_\top(\omega)$ the sets of bottom and top substrings of~$\omega$ respectively.
We use the same notations for undirected~walks.
\end{definition}

\begin{definition}
\label{def:straightBending}
A \defn{peak} (resp.~\defn{deep}) of a walk~$\omega$ is a substring of~$\omega$ of length zero which is on top (resp.~at the bottom of~$\omega$).
A \defn{corner} is either a peak or a deep (in other words, it is a vertex of~$\omega$ where the arrows change direction).
A walk~$\omega$ is \defn{straight} if it has no corner (\ie if~$\omega$ or~$\omega^{-1}$ is a path in~$\bar Q\blossom$), and \defn{bending} otherwise.
A \defn{peak walk} (resp.~\defn{deep walk}) is a walk with a unique corner (in other words, it switches orientation only once), which is a peak (resp.~deep). For~$a \in Q_0$, we denote by~$a_\peak$ the peak walk with peak at~$a$ and by~$a_\deep$ the deep walk with deep~at~$a$.
\end{definition}

\subsection{Non-kissing complex}

We can now define the non-kissing complex of a locally gentle bound quiver following~\cite{McConville, PaluPilaudPlamondon, BrustleDouvilleMousavandThomasYildirim}.
We start with the kissing relation for walks.

\begin{definition}
\label{def:kiss}
Let~$\omega$ and~$\omega'$ be two undirected walks on~$\bar Q$.
We say that~$\omega$ \defn{kisses}~$\omega'$ if ${\Sigma_\top(\omega) \cap \Sigma_\bottom(\omega')}$ contains a \emph{finite} substring.
See \fref{fig:kissing}.
We say that~$\omega$ and~$\omega'$ are \defn{kissing} if~$\omega$ kisses~$\omega'$ or~$\omega'$ kisses~$\omega$ (or both).
Note that we authorize the situation where the commmon finite substring is reduced to a vertex~$a$, meaning that~$a$ is a peak of~$\omega$ and a deep of~$\omega'$.
Observe also that~$\omega$ can kiss~$\omega'$ several times, that~$\omega$ and~$\omega'$ can mutually kiss, and that~$\omega$ can kiss itself.
\begin{figure}[h]
	\capstart
	\centerline{\includegraphics[scale=1]{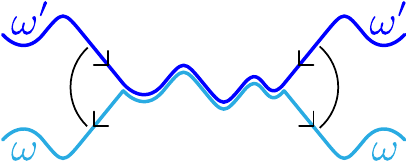}}
	\caption{A schematic representation of two kissing walks: $\omega$~kisses~$\omega'$.}
	\label{fig:kissing}
\end{figure}
\end{definition}

\begin{definition}
\label{def:nKc}
The \defn{non-kissing complex} of~$\bar Q$ is the simplicial complex~$\NKC$ whose faces are the collections of pairwise non-kissing walks of~$\bar Q$.
Note that self-kissing walks never appear in~$\NKC$ by definition.
In contrast, no straight walk can kiss another walk by definition, so that they appear in all facets of~$\NKC$.
The \defn{reduced non-kissing complex}~$\RNKC$ is the simplicial complex whose faces are the collections of pairwise non-kissing bending walks of~$\bar Q$.
\end{definition}

\section{Accordion complex, slalom complex, and non-crossing complex}\label{sec:accordionSlalomNonCrossingComplexes}

\enlargethispage{.1cm}
We now temporarily switch topic and present the accordion complex and the slalom complex of a pair of dual dissections of an orientable surface.
The latter is directly inspired from the case of a disk treated in~\cite{GarverMcConville}, while the former is obtained by duality as was already observed in the case of a disk in~\cite{MannevillePilaud-accordion}.
In the general case of arbitrary orientable surfaces, the correspondence between accordions and slaloms requires a little more attention and is treated in Section~\ref{subsec:accordionsVSSlaloms}.

\subsection{Dual dissections of a surface}

Before defining accordions and slaloms, we need a strong notion of pairs of dual dissections of an orientable surface.
We first review classical definitions of curves, arcs and dissections of a surface adapting it to our setting.

\begin{definition}
A \defn{marked surface}~$\bar\surface \eqdef (\surface, M)$ is an orientable surface~$\surface$ with boundaries, together with a set~$M$ of marked points which can be on the boundary of~$\surface$ or not.
For~$V \subset \surface$,
\begin{enumerate}[(i)]
\item a \defn{$V$-arc} on~$\bar\surface$ is a curve on~$\surface$ connecting two points of~$V$ and whose interior is disjoint from~$M$ and the boundary of~$\surface$.
\item a \defn{$V$-curve} on~$\bar\surface$ is a curve on~$\surface$ which at each end either reaches a point of~$V$ or infinitely circles around and finally reaches a puncture of~$M$, and whose interior is disjoint from~$M$ and the boundary of~$\surface$.
\end{enumerate}
As usual, curves and arcs are considered up to homotopy relative to their endpoints in~$\surface \ssm M$, and curves homotopic to a boundary are not allowed.
\end{definition}

\begin{definition}
\label{def:crossingCurves}
Two curves or arcs \defn{cross} when they intersect in~their~interior.
We will always assume that collections of arcs on a surface are in minimal position, in the sense that they cross each other transversaly, and the number of crossings is minimal.
It is pointed out in \cite{Thurston} that the results in~\cite{FreedmanHassScott} and~\cite{Neumann-Coto} imply that this assumption can always be satisfied (up to homotopy).
\end{definition}

\begin{definition}
A \defn{dissection} of~$\bar\surface$ is a collection~$\dissection$ of pairwise non-crossing arcs on~$\bar\surface$.
The \defn{edges} of~$\dissection$ are its arcs together with the boundary arcs of~$\bar\surface$.
The \defn{faces} of~$\dissection$ are the connected components of the complement of the union of the edges of~$\dissection$ in the surface~$\surface$.
We denote by~$\vertices(\dissection)$, $\edges(\dissection)$ and~$\faces(\dissection)$ the sets of vertices, edges and faces of~$\dissection$ respectively.
The dissection~$\dissection$ is \defn{cellular} if all its faces are topological disks.
For~$V \subseteq M$, a \defn{$V$-dissection} is a dissection with only $V$-arcs.
\end{definition}

\begin{convention}
\textbf{All throughout the paper, all dissections are considered cellular.}
\end{convention}

\begin{definition}
Consider a marked surface~$\bar\surface = (\surface, V \sqcup V\dual)$, where~$V$ and~$V\dual$ are two disjoint sets of marked points so that the points of~$V$ and~$V\dual$ that are on the boundary of~$\surface$ alternate.
A cellular $V$-dissection~$\dissection$ of~$\bar\surface$ and a cellular $V\dual$-dissection~$\dissection\dual$ of~$\bar\surface$ are \defn{dual cellular dissections} if there are pairs of mutually inverse bijections~$V\dual \leftrightarrow \faces(\dissection)$ and~$V \leftrightarrow \faces(\dissection\dual)$, both denoted~$\dual$ in both directions, such that~$\dissection$ has an edge joining its vertices~$u,v \in V$ and separating its faces~$f,g \in  \faces(\dissection)$ if and only if~$\dissection\dual$ has an edge joining its vertices~$f\dual, g\dual \in V\dual$ and separating its faces~$u\dual, v\dual \in \faces(\dissection\dual)$.
\end{definition}

Some examples of dual cellular dissections on different surfaces are represented in \fref{fig:dissections}.
\begin{figure}[t]
	\capstart
	\centerline{\includegraphics[scale=.7]{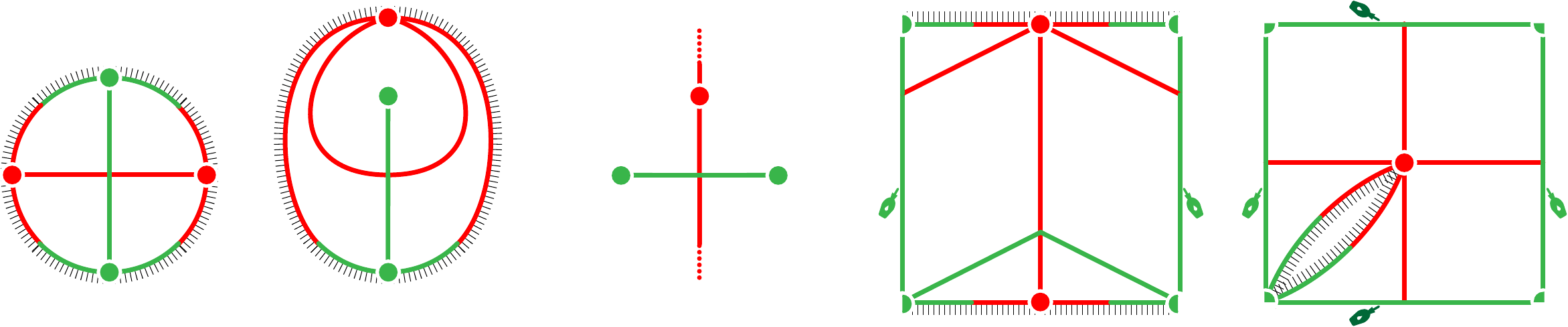}}
	\caption{Some pairs~$(\dissection, \dissection\dual)$ of dual cellular dissections on different surfaces. The dissection~$\dissection$ is in green while its dual dissection~$\dissection\dual$ is~in~red. The boundaries of the surfaces are shaded, and the glue symbols indicate how to identify some edges to get the desired surfaces. The first two surfaces are disks, the third is a sphere without boundary, the fourth is a cylinder, and the fifth is a torus with one boundary component.}
	\label{fig:dissections}
\end{figure}
Note that contrarily to the usual conventions, the dual vertex~$f\dual$ of a face~$f$ of~$\dissection$ is not always in the interior of the face~$f$.
More precisely, there are two situations:
\begin{itemize}
\item if a face~$f$ has no edge on the boundary of~$\surface$, its dual vertex~$f\dual$ lies in the interior~of~$f$ and is a puncture of~$\bar\surface$,
\item if a face~$f$ has an edge on the boundary of~$\surface$, then it has exactly one such edge and its dual vertex~$f\dual$ lies on this edge.
\end{itemize}
In fact, the second point forces the following characterization of the cellular dissections that admit a dual cellular dissection.

\begin{proposition}
\label{prop:conditionsDualDissections}
For a cellular $V$-dissection~$\dissection$ of a marked surface~$\bar\surface = (\surface, V \sqcup V\dual)$, the following assertions are equivalent:
\begin{enumerate}[(i)]
\item there exists a cellular $V\dual$-dissection~$\dissection\dual$ of~$\bar\surface$ such that~$\dissection$ and~$\dissection\dual$ are dual cellular dissections,
\item each face of~$\dissection$ contains exactly one point of~$V\dual$ (in particular, at most one boundary edge).
\end{enumerate}
Moreover, the cellular dissection~$\dissection\dual$ is uniquely determined.
\end{proposition}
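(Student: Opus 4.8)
The plan is to prove the two implications separately and then establish uniqueness, concentrating the genuine topological content in a single ``placement'' step, namely that the dual vertex $f\dual$ actually sits inside the face $f$.

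First I would prove (i)~$\implies$~(ii). Since $\dissection$ is a $V$-dissection, the interiors of its arcs are disjoint from $M = V \sqcup V\dual$ and from the boundary of~$\surface$, so every point $p \in V\dual$ determines a well-defined face of~$\dissection$: an interior (puncture) point lies in the interior of a unique face, and a boundary point lies on a unique boundary edge, hence in a unique face. This yields a map $c \colon V\dual \to \faces(\dissection)$. The crux of this direction is to check that $c$ coincides with the duality bijection $\dual \colon V\dual \to \faces(\dissection)$, i.e.\ that $f\dual \in f$ for every face~$f$. For this I would work locally around~$f\dual$: by the edge condition, the arcs of~$\dissection\dual$ incident to~$f\dual$ are exactly the duals $e\dual$ of the interior edges~$e$ bounding~$f$, each crossing its primal edge~$e$ once; since $\dissection\dual$ is in minimal position, non-crossing and cellular, the cyclic order of these dual arcs around~$f\dual$ matches the cyclic order of the edges~$e$ around~$f$, which confines $f\dual$ to~$f$. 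As $\dual$ is a bijection, so is $c$, giving exactly one point of $V\dual$ per face; and since each boundary edge of a face carries one $V\dual$-point by the alternation hypothesis, this forces at most one boundary edge per face.

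Next I would prove (ii)~$\implies$~(i) by an explicit construction. For each face~$f$ let $f\dual$ be the unique point of~$V\dual$ it contains. For each interior edge~$e$ of~$\dissection$ separating faces~$f$ and~$g$, draw an arc~$e\dual$ from~$f\dual$ to~$g\dual$ crossing~$e$ exactly once at an interior point of~$e$ and otherwise running through the interiors of~$f$ and~$g$; this is possible since $\bar f \cup e \cup \bar g$ is connected, and the interior of~$e\dual$ then avoids $M$ because $f$ and~$g$ contain no point of~$V\dual$ other than $f\dual, g\dual$ and no point of~$V$ in their interiors. Drawing these arcs as disjoint ``spokes'' from~$f\dual$ to the bounding edges of~$f$ makes the collection $\dissection\dual \eqdef \set{e\dual}{e \text{ an interior edge of } \dissection}$ a family of pairwise non-crossing $V\dual$-arcs. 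I would then verify that $\dissection\dual$ is cellular by showing that each of its faces retracts onto a single vertex $v \in V$, using that $\dissection$ is cellular and that $V$ and $V\dual$ alternate on each boundary component to handle the boundary edges. Finally, with the bijections $f \mapsto f\dual$ from $\faces(\dissection)$ to $V\dual$ and $v \mapsto$ (the face of $\dissection\dual$ around~$v$) from $V$ to $\faces(\dissection\dual)$, the required edge condition holds by construction, since $e$ joining $u,v$ and separating $f,g$ corresponds to $e\dual$ joining $f\dual, g\dual$ and separating $u\dual, v\dual$.

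For uniqueness, I would reuse the first implication: any dual~$\dissection\dual$ must have vertex set~$V\dual$, and its assignment $f \mapsto f\dual$ is forced to send~$f$ to the unique point of~$V\dual$ contained in~$f$, so the vertices and their distribution among faces are determined. Each dual edge then joins $f\dual$ to~$g\dual$ for an adjacent pair of faces and crosses the separating edge~$e$ once, and inside the disk $\bar f \cup e \cup \bar g$ any two such arcs are homotopic relative to their endpoints; hence $\dissection\dual$ is determined up to homotopy.

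I expect the main obstacle to be the topological bookkeeping rather than a single hard idea: rigorously establishing the placement $f\dual \in f$ and that the constructed $\dissection\dual$ has only disk faces, while correctly treating the degenerate configurations — an interior arc with the same face on both sides (so that $e\dual$ becomes a loop at $f\dual$), a face carrying a boundary edge (where $f\dual$ lies on the boundary rather than in the interior), a $V\dual$-point that is not an endpoint of any dual arc, and self-folded pieces. It is precisely in these cases that the alternation of $V$ and $V\dual$ on the boundary and the minimality of the intersections must be invoked with care.
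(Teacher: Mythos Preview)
Your proposal is correct and follows the same overall route as the paper: the paper also constructs $\dissection\dual$ face by face, by drawing ``half-edges'' (your ``spokes'') from $f\dual$ to the midpoints of the non-boundary edges of $f$ and gluing them across each edge of $\dissection$.

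The difference is one of thoroughness rather than strategy. The paper declares the implication (i)~$\Rightarrow$~(ii) ``immediate by definition'' and does not justify the placement $f\dual \in f$, the cellularity of $\dissection\dual$, or uniqueness; you correctly identify these as the points with actual topological content and outline arguments for each. Your treatment of the degenerate configurations (self-folded edges, boundary faces, isolated $V\dual$-points) is likewise absent from the paper's proof. So your plan is an honest fleshing-out of what the paper leaves implicit, not a different argument.
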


\begin{proof}
We mimick the proof of~\cite[Prop.~1.12]{OpperPlamondonSchroll}.
The direct implication is immediate by definition.
Assume conversely that each face~$f$ of~$\dissection$ contains exactly one point~$f\dual$ of~$V\dual$.
We construct half-edges of~$\dissection\dual$ in each face~$f$ of~$\dissection$ by joining the dual point~$f\dual$ to the middle of each boundary edge of~$f$ that is not a boundary edge of~$\surface$.
For each edge~$a$ of~$\dissection$, the half-edges incident to the middle of~$a$ in the two faces of~$\dissection$ containing~$a$ form the dual edge~$a\dual$ of~$a$.
All these dual edges do not cross (as the half-edges do not cross in each face of~$\dissection$) and form the dual cellular dissection~$\dissection\dual$~of~$\dissection$.
\end{proof}

\begin{definition}
We consider a set~$B$ of points on the boundary of the surface~$\surface$ such that~$B$ and~$V \cup V\dual$ alternate along the boundary of~$\surface$.
The points of~$B$ are called the \defn{blossom points}.
See \eg Figures~\ref{fig:accordions} and~\ref{fig:slaloms} where the blossom points appear as white hollow vertices.
\end{definition}

\subsection{Accordion complex and slalom complex}

\enlargethispage{.1cm}
Let~$\bar\surface = (\surface, V \sqcup V\dual)$ be a surface with two disjoint sets~$V$ and~$V\dual$ of marked points, and let~$B$ be the corresponding blossom points on the boundary of~$\surface$.
We say that a $B$-curve is \defn{external} if it is homotopic to a boundary arc of~$\surface \ssm B$, and \defn{internal} otherwise.
Note that no $B$-curve can cross an external $B$-curve.
Consider two dual cellular dissections~$\dissection$ and~$\dissection\dual$ of~$\bar\surface$.

The following definition generalises the one of \cite{MannevillePilaud-accordion} for the case where~$\bar \surface$ is a disk.  A~very similar definition appears in \cite{BaurCoelhoSimoes} in a slightly different context under the \mbox{name ``permissible~arc''}.

\begin{definition}
\label{def:accordion}
A \defn{$\dissection$-accordion} is a $B$-curve~$\alpha$ of~$\bar\surface$ such that whenever~$\alpha$ meets a face~$f$~of~$\dissection$,
\begin{enumerate}[(i)]
\item it enters crossing an edge~$a$ of~$f$ and leaves crossing an edge~$b$ of~$f$ (in other words, $\alpha$ is not allowed to circle around~$f\dual$ when~$f\dual$ is a puncture),
\item the two edges~$a$ and~$b$ of~$f$ crossed by~$\alpha$ are consecutive along the boundary of~$f$,
\item $\alpha$, $a$ and~$b$ bound a disk inside~$f$ that does not contain~$f\dual{}$.
\end{enumerate}
By convention, we also consider that the punctures of~$V$ are $\dissection$-accordions that are considered external.
If we were working on the universal cover of the surface, the~$\dissection$-accordion associated to a puncture would be the infinite line crossing all (infinitely many) arcs attached to the puncture.
\end{definition}

\fref{fig:accordions} illustrates some $\dissection$-accordions for the dissections~$\dissection$ of \fref{fig:dissections}.

\begin{figure}[t]
	\capstart
	\centerline{\includegraphics[scale=.7]{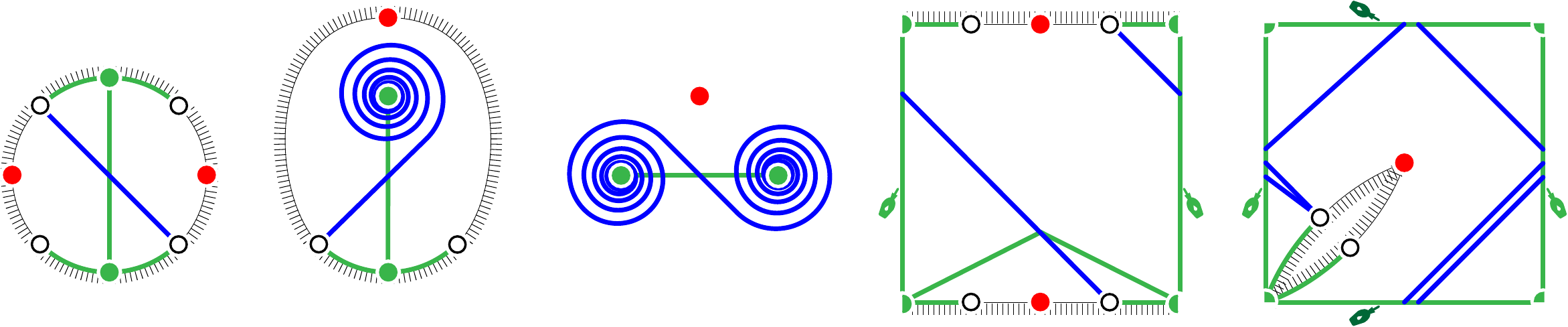}}
	\caption{Some $\dissection$-accordions (in blue) for the dissections~$\dissection$ of \fref{fig:dissections} (in green).}
	\label{fig:accordions}
\end{figure}

\begin{remark}
In Definition~\ref{def:accordion}, observe that
\begin{itemize}
\item the edges of~$a$ and~$b$ of~$f$ crossed by~$\alpha$ might coincide, see the second example in Figure~\ref{fig:accordions}.
\item the first condition is automatically satisfied if~$f\dual$ is not a puncture, \ie when it lies on the boundary of~$\surface$.
\end{itemize}
\end{remark}

\begin{definition}
\label{def:accordionComplex}
The \defn{$\dissection$-accordion complex}~$\AC$ is the simplicial complex whose faces are the collections of pairwise non-crossing $\dissection$-accordions.
Note that self-crossing accordions never appear in~$\AC$ by definition.
In contrast, no external accordion can cross another accordion, so that they appear in all facets of~$\AC$. 
The \defn{reduced $\dissection$-accordion complex}~$\RAC$ is the simplicial complex whose faces are the collections of pairwise non-crossing internal $\dissection$-accordions.
\end{definition}

\enlargethispage{.3cm}
The following definition generalizes the one from~\cite{GarverMcConville} for the case where~$\bar\surface$ is a disk.

\begin{definition}
\label{def:slalom}
A \defn{$\dissection\dual$-slalom} is a $B$-curve~$\alpha$ of~$\bar\surface$ such that, whenever~$\alpha$ crosses an edge~$a$ of~$\dissection$ contained in two faces~$f,g$ of~$\dissection$, the marked points~$f\dual$ and~$g\dual$ lie on opposite sides of~$\alpha$ in the union of~$f$ and~$g$ glued along~$a$.
Here, we consider that~$f\dual$ lies on the right (resp.~left) of~$\alpha$ when~$\alpha$ circles clockwise (resp.~counterclockwise) around~$f\dual$.
By convention, we also consider that the punctures of~$V$ are $\dissection\dual$-slaloms that are considered external.
If we were working on the universal cover of the surface, the~$\dissection$-accordion associated to a puncture would be the infinite line crossing all (infinitely many) arcs attached to the puncture.
\end{definition}

\fref{fig:slaloms} illustrates some $\dissection\dual$-slaloms for the dual dissections~$\dissection\dual$ of \fref{fig:dissections}.

\begin{figure}[t]
	\capstart
	\centerline{\includegraphics[scale=.7]{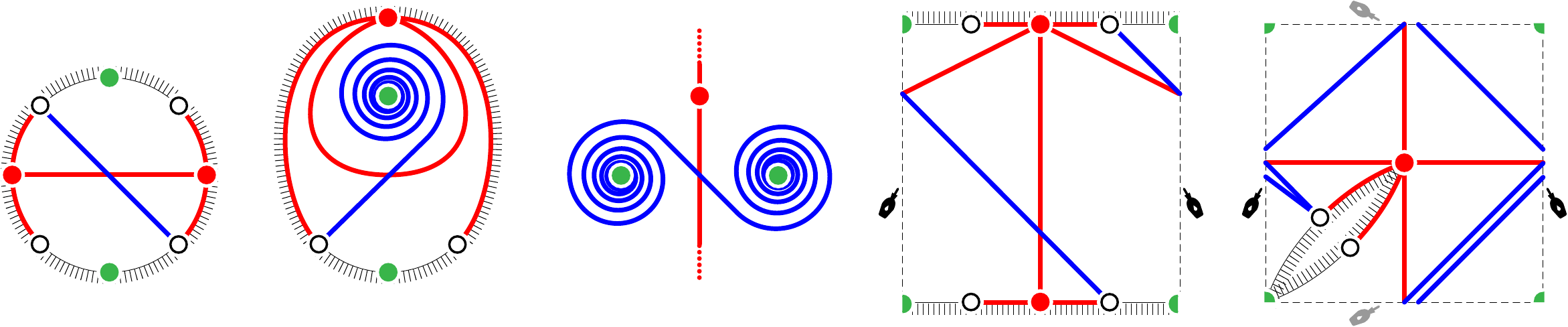}}
	\caption{Some $\dissection\dual$-slaloms (in blue) for the dual dissections~$\dissection\dual$ of \fref{fig:dissections} (in red).}
	\label{fig:slaloms}
\end{figure}

\begin{definition}
\label{def:slalomComplex}
The \defn{$\dissection\dual$-slalom complex}~$\SC$ is the simplicial complex whose faces are the collections of pairwise non-crossing $\dissection\dual$-slaloms.
Note that self-crossing slaloms never appear in~$\SC$ by definition.
In contrast, no external slalom can cross another slalom, so that they appear in all facets of~$\SC$. 
The \defn{reduced $\dissection\dual$-slalom complex}~$\RSC$ is the simplicial complex whose faces are the collections of pairwise non-crossing internal $\dissection\dual$-slaloms.
\end{definition}

\subsection{Accordions versus slaloms and the non-crossing complex}
\label{subsec:accordionsVSSlaloms}

The following statement is illustrated by Figures~\ref{fig:accordions} and~\ref{fig:slaloms}.

\begin{proposition}
\label{prop:accordionsSlaloms}
For two dual cellular dissections~$\dissection$ and~$\dissection\dual$, the $\dissection$-accordions are precisely the $\dissection\dual$-slaloms and the (reduced) $\dissection$-accordion complex coincides with the (reduced) $\dissection\dual$-slalom complex.
\end{proposition}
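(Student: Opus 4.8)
The plan is to prove first that the $\dissection$-accordions are exactly the $\dissection\dual$-slaloms \emph{as sets of $B$-curves}, and then to deduce the equality of complexes for free. Indeed, both $\AC$ and $\SC$ are defined as the collections of pairwise non-crossing curves among their respective vertex sets, and the crossing relation (\ie intersecting in the interior) is an intrinsic property of a pair of $B$-curves that does not refer to $\dissection$ or $\dissection\dual$. Likewise, being internal or external (being homotopic to a boundary arc of $\surface \ssm B$) is intrinsic, and the convention that the punctures of $V$ are external is imposed identically in Definitions~\ref{def:accordion} and~\ref{def:slalom}. Hence, once the two vertex sets coincide, so do the faces, giving $\AC = \SC$ and $\RAC = \RSC$ at once. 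It therefore remains to prove the equivalence ``$\alpha$ is a $\dissection$-accordion $\iff$ $\alpha$ is a $\dissection\dual$-slalom'' for every $B$-curve $\alpha$.

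First I would observe that this equivalence is \textbf{local}, and can be checked one face of $\dissection$ at a time. The tool is the explicit description of $\dissection\dual$ coming from the proof of Proposition~\ref{prop:conditionsDualDissections}: inside each face $f$ of $\dissection$, the dual dissection consists precisely of the half-edges joining the dual point $f\dual$ to the midpoints of the non-boundary edges of $f$. These half-edges cut $f$ into \emph{sectors}, one for each corner (vertex) of $f$, the sector at a vertex $v$ being bounded by the two half-edges pointing to the midpoints of the two edges of $f$ incident to $v$. Both defining conditions only see this local picture. On the one hand, the accordion conditions (ii) and (iii) assert that, in each passage, $\alpha$ enters and leaves $f$ through two consecutive edges $a$ and $b$ and cuts off the corner between them on the side away from $f\dual$; equivalently, $\alpha$ crosses a single sector of $f$, entering and leaving through its two bounding half-edges while staying on the far side of $f\dual$. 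On the other hand, the slalom condition at an edge $a$ separating $f$ and $g$ asserts that $f\dual$ and $g\dual$ lie on opposite sides of $\alpha$ in the disk $f \cup_a g$, which is exactly to say that $\alpha$ crosses the dual edge $a\dual$ joining $f\dual$ to $g\dual$ through the midpoint $m_a$ of $a$.

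I would then match the two descriptions crossing by crossing, after placing $\alpha$ in minimal position with respect to $\dissection \cup \dissection\dual$ simultaneously. In this position each crossing of $\alpha$ with an edge $a$ occurs near the midpoint $m_a$, where $a\dual$ meets $a$, so that crossing $a$ is the same event as crossing the half-edge to $m_a$ and passing from one sector to the neighbouring one; the slalom requirement that $f\dual$ and $g\dual$ be separated by $\alpha$ then says precisely that $\alpha$ skirts the corner sector rather than circling $f\dual$, which is the accordion requirement (iii). The main obstacle is the careful bookkeeping at the two types of degeneracies and, above all, at punctures. When $f$ carries a boundary edge, $f\dual$ sits on that edge and has no half-edge, and the case $a = b$ (the curve re-crossing the same edge, see the second example of \fref{fig:accordions}) must be handled separately. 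When $f\dual$ is a puncture, the accordion condition (i), forbidding $\alpha$ to circle around $f\dual$, has to be reconciled with the clockwise/counterclockwise sign convention of Definition~\ref{def:slalom}, including for the infinite ends of $\alpha$ that spiral into a puncture; it is also at this point that minimal position with respect to both dissections is essential, as it is what prevents $\alpha$ from entering and leaving $f$ through non-consecutive edges, a passage which would force $\alpha$ to cross a dual half-edge without crossing its primal edge. Once these local cases are settled, the global equivalence, and hence the proposition, follows.
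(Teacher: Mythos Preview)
Your overall plan—show that the $\dissection$-accordions and the $\dissection\dual$-slaloms coincide as sets of $B$-curves, and then get the equality of (reduced) complexes for free since crossing and internality are intrinsic—is correct and is exactly how the paper begins as well.

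The gap is in your claimed local equivalence. You assert that the check ``can be done one face of~$\dissection$ at a time'' and that, in minimal position, ``crossing $a$ is the same event as crossing the half-edge to $m_a$''. Neither is true in the form you state. The accordion condition in a face $f$ only says that the arc of~$\alpha$ cuts off a corner of~$f$ away from $f\dual$; it does \emph{not} pin down which half of the edge $a$ the crossing sits in, so the arc need not lie in a single sector in any literal sense, and crossing $a$ need not force a crossing with~$a\dual$. More seriously, the slalom condition at an edge $a$ couples the two adjacent faces $f,g$: it asks that $f\dual$ and $g\dual$ lie on opposite sides of the arc of~$\alpha$ in $f\cup_a g$. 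When $\alpha$ cuts off the corner at the \emph{same} endpoint $u$ of $a$ in both $f$ and $g$—which the accordion conditions alone certainly allow, since $\alpha$ may bend around a vertex of~$\dissection$—both $f\dual$ and $g\dual$ land on the side of~$\alpha$ away from~$u$, and in that passage $\alpha$ stays inside the single $\dissection\dual$-face $u\dual$ without meeting $a\dual$ at all. So the correspondence you propose is not a face-by-face or edge-by-edge bijection; one must instead track which $\dissection\dual$-face $\alpha$ occupies and detect the transitions between them.

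This is precisely what the paper does, and by a different shape of argument than yours. For the forward implication it follows three consecutive $\dissection$-edges $a,b,c$ crossed by~$\alpha$ with corner vertices $u$ (between $a,b$) and $v$ (between $b,c$): if $u=v$ the segment of $\alpha$ stays in the single $\dissection\dual$-cell $u\dual$ and there is nothing to check, while if $u\ne v$ one verifies directly that $u$ and $v$ are separated by~$\alpha$ in $u\dual\cup_{b\dual}v\dual$. For the converse it argues by contrapositive through four cases—consecutive crossings through non-adjacent edges, through adjacent edges bounding a non-disk region, through a loop edge, and $\alpha$ spiralling inside a cell—each time exhibiting a $\dissection\dual$-edge along which the slalom condition fails. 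Your sector picture is the right geometry underlying all of this, but to turn it into a proof you would need exactly this $u=v$ versus $u\ne v$ bookkeeping and the case analysis for the converse, rather than the symmetric local matching you sketch.
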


\begin{proof}
Let~$\alpha$ be a~$\dissection$-accordion.
Let~$a$,~$b$,~$c$ be three consecutive edges of~$\dissection$ crossed by~$\alpha$.
Let~$u \in \dissection$ be the common endpoint of~$a$ and $b$ given by Definition~\ref{def:accordion} and let~$v \in \dissection$ be the common endpoint of~$b$ and~$c$.
If $u = v$, then the segment of~$\alpha$ between~$a$ and~$c$ stays in the same face of~$\dissection\dual$.
Otherwise, when crossing~$b$, the accordion~$\alpha$ leaves the face~$u\dual$ of $\dissection\dual$ and enters the face~$v\dual$.
In the surface obtained by glueing the two cells~$u\dual$ and~$v\dual$ along~$b\dual$, the marked points~$u$ and~$v$ are separated by~$\alpha$.
Conversely, assume that~$\alpha$ is an curve of~$(\surface, B)$ which is not a $\dissection$-accordion.
We are in one of the following cases.

Case 1: The curve~$\alpha$ consecutively crosses two edges~$a$ and~$b$ of~$\dissection$ that do not share a common endpoint.
Let~$f$ be the face of~$\dissection$ that contains the segment of~$\alpha$ between~$a$ and~$b$.
Consider the connected component of~$f\setminus\alpha$ that does not contain~$f\dual$.
The boundary of this component contains an edge~$c \in \dissection \setminus \{a\}$ that shares a common endpoint with~$a$.
Let~$u,v$ be the endpoints~of~$c$.
Then~$\alpha$ cuts the cells $u\dual,v\dual$ glued along~$c\dual$ into two connected components, one containing both~endpoints~of~$\delta$.

Case 2: The curve~$\alpha$ consecutively crosses two edges~$a$ and~$b$ of~$\dissection$ that share a unique common endpoint, but the region delimited by~$\alpha$,~$a$ and~$b$ is not a disk.
Since the dissection~$\dissection$ is cellular, that region has to contain some marked point~$u \in \dissection\dual$.
One conclude that~$\alpha$ is not a~$\dissection\dual$-slalom similarly as in the first case, by considering the component of~$u\dual \setminus \alpha$ that does not contain~$u$.

Case 3: The curve~$\alpha$ consecutively crosses two edges~$a$ and~$b$ of~$\dissection$ that share two different endpoints, and none of the two regions delimited by~$\alpha$,~$a$ and~$b$ is a disk.
Since the dissection~$\dissection$ is cellular, both regions have to contain a puncture.
This contradicts the assumption that the dissections~$\dissection$ and~$\dissection\dual$ are dual to each other.

Case 4: If~$\alpha$ stays inside a $\dissection$-cell, then it rolls around a~$\dissection\dual$ puncture.
There is a~$\dissection\dual$-edge~$a$ ending in that puncture. When~$\alpha$ crosses~$a$, the puncture stays on the same side of~$\alpha$, hence~$\alpha$ is not a~$\dissection\dual$-slalom.
\end{proof}

\begin{definition}
\label{def:noncrossingComplex}
We call \defn{non-crossing complex} of the pair of dual cellular dissections~$(\dissection, \dissection\dual)$ the simplicial complex~$\NCC \eqdef \AC = \SC$.
Similarly, the \defn{reduced non-crossing complex} of~$(\dissection, \dissection\dual)$ is~$\RNCC \eqdef \RAC = \RSC$.
\end{definition}

\begin{remark}
The $\dissection$-slaloms and the $\dissection\dual$-accordions are defined dually and also coincide.
\end{remark}

\begin{example}
When the dissection~$\dissection$ is a classical dissection of a polygon (with no punctures) where each cell has at most one boundary edge, the dual dissection~$\dissection\dual$ is a tree.
The non-crossing complex in this situation was treated in detail in~\cite{GarverMcConville, MannevillePilaud-accordion}.
Note that even when the surface~$\surface$ is a disk, we authorize in the present paper both~$\dissection$ and~$\dissection\dual$ to have interior faces and punctures.
\end{example}

\begin{remark}
A quick disclaimer about accordion complexes and slalom complexes of arbitrary cellular dissections.
As stated in Proposition~\ref{prop:conditionsDualDissections}, a cellular dissection~$\dissection$ with a face containing more than one boundary edge does not admit a dual cellular dissection~$\dissection\dual$.
This prevents the use of the definitions given in this section.
Although the accordion complex (resp.~slalom complex) could still be defined as in~\cite{MannevillePilaud-accordion} (resp.~\cite{GarverMcConville}) for the disk, the resulting complexes would be joins of smaller accordion complexes (resp.~slalom complexes) defined in this paper.
See \cite[Prop.~2.4]{MannevillePilaud-accordion} for a detailed statement in the case of the disk.
\end{remark}

\section{Non-kissing versus non-crossing}

In this section, we show that non-kissing and non-crossing complexes actually coincide.
Namely, any pair of dual cellular dissections~$(\dissection, \dissection\dual)$ defines a locally gentle bound quiver~$\bar Q_{\dissection}$ (see Section~\ref{subsec:D2Q}) such that the non-crossing complex~$\NCC$ is isomorphic to the non-kissing complex~$\NKC[\bar Q_{\dissection}]$.
Conversely, any locally gentle bound quiver~$\bar Q$ gives rise to a surface~$\surface_{\bar Q}$ equipped with a pair of dual cellular dissections~$\dissection_{\bar Q}$ and~${{\dissection\dual}\!\!_{\bar Q}}$ (see Section~\ref{subsec:Q2D}) such that the non-kissing complex~$\NKC$ is isomorphic to the non-crossing complex~$\NCC[\dissection_{\bar Q}, {\dissection\dual\!\!_{\bar Q}}]$.
In fact, we show that the accordion, slalom, and non-kissing complexes can all be pictured in a unique way on the surface.
Interestingly, via the correspondence between pairs of dual cellular dissections and locally gentle bound quivers, the duality between the dissections~$\dissection$ and~$\dissection\dual$ translates into the Koszul duality between the locally gentle algebras of~$\bar Q$ and~$\bar Q\koszul$.
Many examples are treated in Section~\ref{subsec:Q2D}.

\subsection{The bound quiver of a dissection}
\label{subsec:D2Q}

Let~$\dissection$ and~$\dissection\dual$ be two dual cellular dissections of a marked surface~$(\surface, V\sqcup V\dual)$.
Let $B$ be the set of blossom points.

\begin{definition}
\label{def:quiverDualDissections}
The bound quiver of the dissection~$\dissection$ is the bound quiver $\bar Q_{\dissection} = (Q_{\dissection}, I_{\dissection})$ defined as follows:
\begin{enumerate}[(i)]
\item the set of vertices of~$Q_{\dissection}$ is the set of edges of $\dissection$;
\item there is an arrow from~$a$ to~$b$ for each common endpoint~$v$ of~$a$ and~$b$ such that~$b$ comes immediately after~$a$ in the counterclockwise order around~$v$;
\item the ideal~$I_{\dissection}$ is generated by the paths of length two in~$Q_{\dissection}$ obtained by composing arrows which correspond to triples of consecutive edges in a face of~$\dissection$.
\end{enumerate}
The bound quiver of the dissection~$\dissection\dual$ is the bound quiver $\bar Q_{\dissection\dual} = (Q_{\dissection\dual}, I_{\dissection\dual})$ defined by replacing~$\dissection$ by~$\dissection\dual$ in the above.
\end{definition}

\fref{fig:quiversDissections} illustrates this construction for the dissections of \fref{fig:dissections}.

\begin{figure}[t]
	\capstart
	\centerline{\includegraphics[scale=.7]{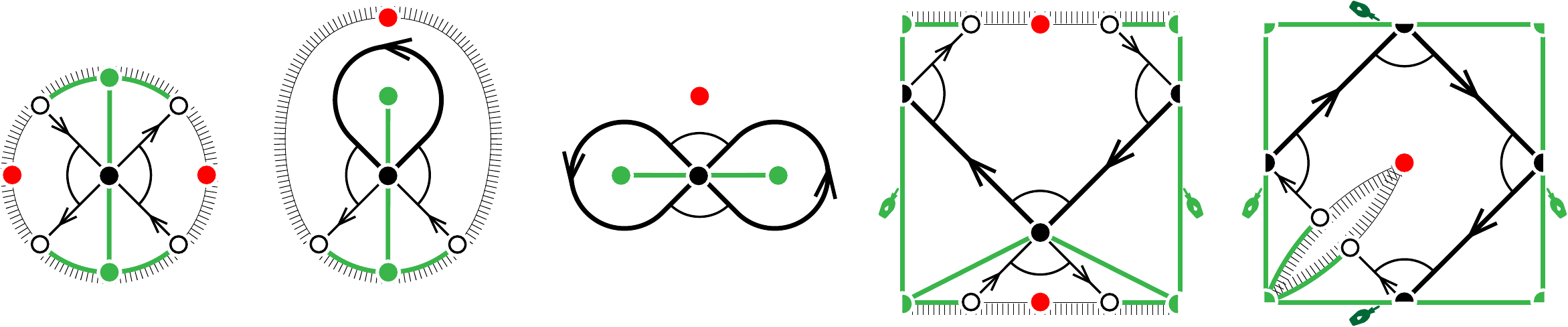}}
	\medskip
	\centerline{\includegraphics[scale=.7]{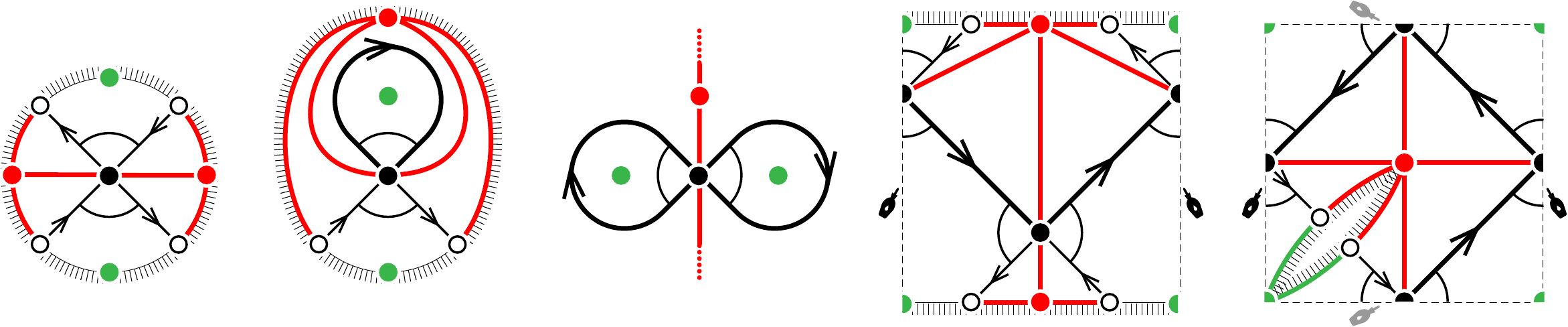}}
	\caption{The quivers associated to the dissections of \fref{fig:dissections}. As stated in Proposition~\ref{prop:dualityKoszul1}, dual dissections give rise to Koszul dual bound quivers.}
	\label{fig:quiversDissections}
\end{figure}

\begin{remark}
The blossoming quiver~$\bar Q_\dissection\blossom$ of the quiver~$\bar Q_\dissection$ is obtained with the same procedure by considering additional blossom vertices along the boundary of the surface. See \fref{fig:quiversDissections}.
\end{remark}

\begin{lemma}
\label{lemm:quiverOfDissectionIsLocallyGentle}
The bound quiver~$\bar Q_{\dissection} = (Q_{\dissection}, I_{\dissection})$ is a locally gentle bound quiver.
\end{lemma}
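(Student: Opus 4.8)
The plan is to verify directly the three defining conditions (i)--(iii) of Definition~\ref{def:gentleQuiver}, working locally around each marked point $v \in V$ and each face of $\dissection$. The setup I will use throughout is this: since the arcs of $\dissection$ are pairwise non-crossing and $\surface$ is oriented, the edges of $\dissection$ incident to a given endpoint $v$ inherit a cyclic counterclockwise order, and between two cyclically consecutive such edges lies exactly one angular sector, which is a corner of a single face of $\dissection$ (here I use that $\dissection$ is cellular, so each face is a disk with a genuine cyclic boundary sequence of edges). By Definition~\ref{def:quiverDualDissections}(ii), the arrows of $Q_\dissection$ at $v$ are precisely the pairs $(a,b)$ of consecutive edges at $v$, oriented $a \to b$ when $b$ is the immediate counterclockwise successor of $a$; I will say that such an arrow \emph{lives at $v$} and \emph{belongs to} the face whose corner is the sector swept from $a$ to $b$.

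Conditions (i) and (ii) are then quick. For (i), a vertex of $Q_\dissection$ is an edge $a$ of $\dissection$, having at most two ends; at each end $a$ has a unique counterclockwise successor and predecessor among the edges of $\dissection$, producing one arrow out of and one arrow into $a$ (or none in $Q_\dissection$, just a blossom arrow of $\bar Q_\dissection\blossom$, if that neighbour is a boundary arc). Summing over the at most two ends gives at most two outgoing and two incoming arrows. For (ii), I must only check that a triple of consecutive edges in a face yields a genuine length-two \emph{path}: traversing the boundary of a face $f$ with $f$ on its left, successive edges $\dots,a,b,c,\dots$ meet at successive vertices where the traversal agrees with the counterclockwise-successor rule, so $a \to b$ and $b \to c$ are composable. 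Hence each generator of $I_\dissection$ is a path of length exactly two.

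Condition (iii) is the main point, where gentleness is extracted. Fix an arrow $\beta\colon a \to b$ living at a vertex $v$ and belonging to a face $f$. There are at most two arrows $\gamma$ with $s(\gamma)=b$: one living at $v$ (towards the counterclockwise successor of $b$ at $v$) and one living at the other endpoint $w$ of $b$. Following the boundary of $f$ past $b$ continues at $w$, so $a$, $b$, and the successor of $b$ at $w$ are three consecutive edges of $f$, giving $\beta\gamma \in I_\dissection$; whereas at $v$ the sectors from $a$ to $b$ and from $b$ to its successor are corners of two distinct faces, so $a,b$ and the successor of $b$ at $v$ are never consecutive in one face and $\beta\gamma \notin I_\dissection$. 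Thus at most one $\gamma$ gives $\beta\gamma \in I_\dissection$ and at most one gives $\beta\gamma \notin I_\dissection$; the symmetric analysis of the arrows $\alpha$ with $t(\alpha)=s(\beta)=a$ yields the dual statement, establishing (iii).

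The main obstacle is the orientation bookkeeping underlying (ii) and (iii): one must nail down the counterclockwise conventions precisely enough to be sure that ``continuing along the same face'' corresponds exactly to composites lying in $I_\dissection$, while ``turning at the shared vertex'' corresponds to composites lying outside $I_\dissection$. Cellularity is what makes this local-to-global matching unambiguous, since it guarantees that each face's boundary is a well-defined cyclic word of edges on which the successor rule can be read off consistently.
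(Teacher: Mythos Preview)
Your proof is correct and follows the same approach as the paper's: the paper also argues that at each of the two endpoints of an edge~$a$ there is at most one incoming and one outgoing arrow, and that composites at the \emph{same} endpoint of the middle edge are the ones not in~$I_\dissection$ while composites at \emph{different} endpoints give the relations. You simply supply more of the bookkeeping that the paper leaves implicit.

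One orientation slip is worth flagging. In your check of~(ii) you assert that traversing~$\partial f$ with~$f$ on the left sends each edge to its counterclockwise successor at the shared vertex. In fact the opposite holds: when you walk along~$\partial f$ keeping~$f$ on your left and pass from edge~$a$ to edge~$b$ at~$v$, the interior sector of~$f$ at~$v$ is swept counterclockwise from~$b$ to~$a$, so it is~$a$ that is the counterclockwise successor of~$b$, and the arrow of~$Q_\dissection$ sitting at that corner is~$b\to a$. Equivalently, the arrows run \emph{against} the left-hand traversal (or with the right-hand one). This does not damage your conclusions---the generators of~$I_\dissection$ are still length-two paths, just~$c\to b\to a$ rather than~$a\to b\to c$, and your analysis of~(iii) (same endpoint~$\Rightarrow$ composable, different endpoints~$\Rightarrow$ relation) goes through unchanged---but the sentence as written is backwards. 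Your final paragraph correctly identifies this orientation bookkeeping as the delicate point, so it is worth getting the convention right there.
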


\begin{proof}
Each edge~$a$ of~$\dissection$ has two endpoints, and at each endpoint there is at most one edge of~$\dissection$ coming immediately after~$a$ in the clockwise order, and at most one edge of~$\dissection$ coming immediately before~$a$.
Thus, in~$Q_{\dissection}$, there are at most two arrows leaving~$a$ and at most two arriving in~$a$.
Among these arrows, those arising from the same endpoint of~$a$ will be composable, and those arising from different endpoints will yield relations.
Thus $\bar Q_{\dissection}$ is locally gentle.
\end{proof}

\begin{definition}
\label{defi:koszulDual}
The \defn{Koszul dual} of a locally gentle bound quiver~$\bar Q = (Q,I)$ is the bound quiver~$\bar Q\koszul = (Q\koszul, I\koszul)$ defined as follows:
\begin{enumerate}[(i)]
\item the quiver~$Q\koszul$ is equal to the opposite quiver of~$Q$, that is, the quiver obtained from~$Q$ by reversing all arrows;
\item the ideal~$I\koszul$ is generated by the opposites of the paths of length two in~$Q$ that do not appear in~$I$.
\end{enumerate}
\end{definition}
It is seen in \cite{BessenrodtHolm} that the (ungraded version of the) Koszul dual of a locally gentle algebra~$kQ/I$ is isomorphic to~$kQ\koszul/I\koszul$.
Note that~$\bar Q\koszul$ is locally gentle, and that Koszul duality is an involution and commutes with blossoming:~$(\bar Q\koszul)\koszul = \bar Q$ and~$(\bar Q\koszul)\blossom = (\bar Q\blossom)\koszul$.
Examples are represented on \fref{fig:koszulQuivers}.

\begin{figure}[t]
	\capstart
	\centerline{\includegraphics[scale=.6]{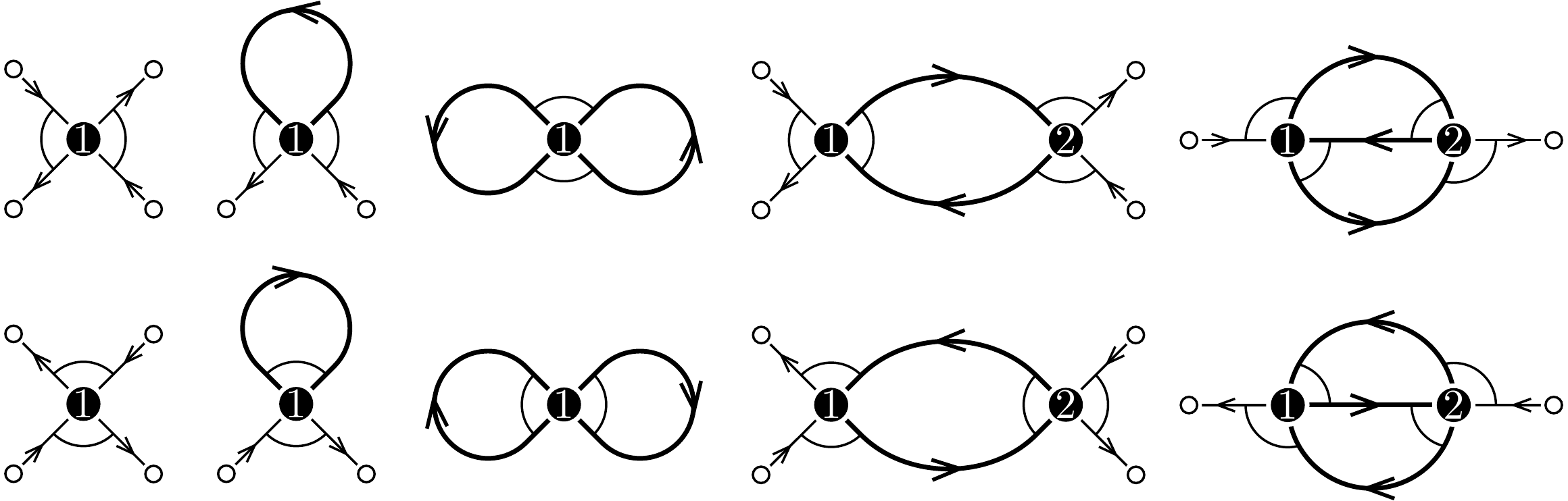}}
	\caption{The Koszul duals of the (blossoming) quivers of \fref{fig:quivers}.}
	\label{fig:koszulQuivers}
\end{figure}

The following proposition appears in \cite[Proposition 1.25]{OpperPlamondonSchroll}.  

\begin{proposition}
\label{prop:dualityKoszul1}
Let~$\dissection$ and~$\dissection\dual$ be two dual cellular dissections of a marked surface~$(\surface, V\sqcup V\dual)$.
The bound quivers~$\bar Q_{\dissection}$ and~$\bar Q_{\dissection\dual}$ are Koszul dual to each other.
\end{proposition}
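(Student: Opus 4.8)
The plan is to establish the Koszul duality by directly comparing the two constructions of bound quivers from dissections given in Definition~\ref{def:quiverDualDissections}, using the duality bijections $V \leftrightarrow \faces(\dissection\dual)$ and $V\dual \leftrightarrow \faces(\dissection)$ that define dual cellular dissections. Since the vertices of $Q_\dissection$ are the edges of $\dissection$ and the vertices of $Q_{\dissection\dual}$ are the edges of $\dissection\dual$, and since the duality sets up a bijection $a \leftrightarrow a\dual$ between edges of $\dissection$ and edges of $\dissection\dual$ (each edge $a$ of $\dissection$ joining $u,v \in V$ and separating faces $f,g$ corresponds to the edge $a\dual$ of $\dissection\dual$ joining $f\dual, g\dual \in V\dual$ and separating $u\dual, v\dual$), I would first fix this identification on the level of vertices of the two quivers. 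Under Koszul duality (Definition~\ref{defi:koszulDual}), the underlying quiver $Q_\dissection\koszul$ is the opposite of $Q_\dissection$, so the goal splits into two parts: matching arrows and matching relations.

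First I would analyze the arrows. In $Q_\dissection$, an arrow $a \to b$ corresponds to a common endpoint $v \in V$ of the edges $a,b$ of $\dissection$ with $b$ immediately following $a$ counterclockwise around $v$. The key geometric observation is that two edges $a, b$ of $\dissection$ sharing an endpoint $v$ and consecutive in the rotation around $v$ are precisely two edges bounding a common face of $\dissection$; dually, their images $a\dual, b\dual$ share the vertex $v\dual \in V\dual$ (the dual of the face they bound) and are consecutive in the rotation around $v\dual$. Thus each arrow of $Q_\dissection$ arising at a vertex $v$ corresponds to a pair $(a\dual, b\dual)$ of consecutive edges around the dual vertex $v\dual$, i.e.\ to an arrow of $Q_{\dissection\dual}$. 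The orientation bookkeeping is where I expect the main obstacle: one must check that counterclockwise consecutiveness around $v$ translates, under the duality and the alternation of $V$ and $V\dual$ along the boundary together with the fixed orientation of $\surface$, into the \emph{reversed} consecutiveness around $v\dual$, so that an arrow $a \to b$ in $Q_\dissection$ matches an arrow $b\dual \to a\dual$ in $Q_{\dissection\dual}$, i.e.\ the arrow reversal demanded by the opposite quiver in Koszul duality. Keeping track of which of the two faces around $a$ carries the ``next'' edge and how the counterclockwise convention interacts with the orientation requires a careful local picture at each vertex, and this is the step that most needs a figure.

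Next I would match the relations. By Definition~\ref{def:quiverDualDissections}(iii), the ideal $I_\dissection$ is generated by paths $a \to b \to c$ where $a,b,c$ are three consecutive edges \emph{within a single face} of $\dissection$, whereas the composable-but-\emph{unrelated} paths are exactly those arising from three edges consecutive around a single vertex $v \in V$ but \emph{not} all in one face (equivalently, turning around $v$ past a face boundary). Under the duality, a path through a face of $\dissection$ corresponds to a path turning around a vertex of $\dissection\dual$, and conversely a path turning around a vertex of $\dissection$ corresponds to a path through a face of $\dissection\dual$. Therefore the relations of $\bar Q_\dissection$ (face-paths) are interchanged with the non-relations (vertex-paths), which is exactly the prescription of Definition~\ref{defi:koszulDual}(ii): the ideal $I\koszul$ is generated by the opposites of the length-two paths \emph{not} lying in $I$. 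Combining the arrow-reversal matching with this interchange of face-paths and vertex-paths yields an isomorphism of bound quivers $\bar Q_{\dissection\dual} \cong (\bar Q_\dissection)\koszul$, which is the desired statement. Since this proposition already appears as \cite[Proposition~1.25]{OpperPlamondonSchroll}, I would note that one may alternatively simply cite that reference, but the direct verification above makes the correspondence transparent and self-contained.
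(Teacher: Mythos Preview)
Your proposal is correct and follows essentially the same route as the paper's proof: identify vertices via the edge duality $a \leftrightarrow a\dual$, match arrows (with reversal) by observing that edges consecutive around a vertex of $\dissection$ correspond to edges consecutive along a face of $\dissection$ and hence around a vertex of $\dissection\dual$, and then swap relations with non-relations via the face/vertex interchange. The paper carries this out in two sentences; your only slip is notational (you write $v\dual$ for the dual of the \emph{face} bounded by $a$ and $b$, not of the vertex $v$), but the underlying argument is the same.
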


\begin{proof}
The duality provides a bijection~$a\in (Q_\dissection)_0\mapsto a\dual\in (Q_{\dissection\dual})_0$, and also induces a bijection~${\alpha \in (Q_\dissection)_1 \mapsto \alpha\dual \in (Q_{\dissection\dual})_1}$ with~$s(\alpha\dual) = t(\alpha)\dual$ and~$t(\alpha\dual) = s(\alpha)\dual$.
Moreover, for any two composable arrows~$\alpha,\beta\in(Q_\dissection)_0$, we have~$\alpha\beta\in I_\dissection$ if and only if~$s(\alpha),t(\alpha),t(\beta)$ are three consecutive edges of a~$\dissection$-cell~$f$, if and only if~$s(\alpha)\dual,t(\alpha)\dual,t(\beta)\dual$ are three consecutive edges in~$\dissection\dual$ ending in~$f\dual$, if and only if~$\beta\dual\alpha\dual\notin I_{\dissection\dual}$.
\end{proof}

\subsection{The surface of a locally gentle bound quiver}
\label{subsec:Q2D}

We now associate a surface to a locally gentle bound quiver.
This construction yields the same surface as the one constructed in \cite{OpperPlamondonSchroll} (see Remark \ref{rem:comparisonWithOPS} for a comparison of the two constructions).

\begin{definition}
\label{def:surfaceQuiver}
The surface~$\surface_{\bar Q}$ of a locally gentle bound quiver~$\bar Q = (Q,I)$ is the surface obtained from the blossoming quiver~$\bar Q\blossom$ as follows:
\begin{enumerate}[(i)]
\item for each arrow~$\alpha \in Q_1\blossom$, consider a lozange~$L(\alpha)$ with four (oriented) sides
\begin{alignat*}{2}
& \Enrs{\alpha} = [v(\alpha), s(\alpha)] \qquad & \Enrt{\alpha} = [v(\alpha), t(\alpha)] \qquad & \text{(colored green)} \\
\text{and} \qquad & \Ers{\alpha} = [f\dual(\alpha), s(\alpha)] \qquad & \Ert{\alpha} = [f\dual(\alpha), t(\alpha)] \qquad & \text{(colored red)}
\end{alignat*}
placed as follows:

\centerline{
\begin{tikzpicture}[
	scale=1.5,
	thick,
	decoration={markings, mark=at position 0.5 with {\arrow{>}}},
	roundNode/.style={circle, fill=black, inner sep=2pt, outer sep=1.5pt}
	]
	\node[roundNode, label={[label distance=-3pt]left:{$s(\alpha)$}}] (s) at (-1,0) {};
	\node[roundNode, label={[label distance=-3pt]right:{$t(\alpha)$}}] (t) at (1,0) {};
	\node[roundNode, fill=green, label={[label distance=-3pt]above:{$\green v(\alpha)$}}] (v) at (0,1) {};
	\node[roundNode, fill=red, label={[label distance=-3pt]below:{$\red f\dual(\alpha)$}}] (vd) at (0,-1) {};
	\draw[color=black, postaction={decorate}] (s) -> (t) node[midway, above] {$\alpha$};
	\pic [draw, angle radius=.7cm] {angle = s--t--vd};
	\pic [draw, angle radius=.7cm] {angle = vd--s--t};
	\draw[color=green, postaction={decorate}] (v) -> (s) node[midway, above left, outer sep=-3pt] {$\green \Enrs{\alpha}$};
	\draw[color=green, postaction={decorate}] (v) -> (t) node[midway, above right, outer sep=-3pt] {$\green \Enrt{\alpha}$};
	\draw[color=red, postaction={decorate}] (vd) -> (s) node[midway, below left, outer sep=-3pt] {$\red \Ers{\alpha}$};
	\draw[color=red, postaction={decorate}] (vd) -> (t) node[midway, below right, outer sep=-3pt] {$\red \Ert{\alpha}$};
\end{tikzpicture}
}

\item for any~$\alpha, \beta \in Q_1\blossom$ with~$t(\alpha) = s(\beta)$, proceed to the following identifications:
    \begin{itemize}
    \item if~$\alpha\beta \in I$, then glue~$\Ert{\alpha}$ with~$\Ers{\beta}$,
    \item if~$\alpha\beta \notin I$, then glue~$\Enrt{\alpha}$ with~$\Enrs{\beta}$.
    \end{itemize}
\end{enumerate}
The orientations on the edges are only used to perform the identifications and can be immediately forgotten.
\end{definition}

Definition~\ref{def:surfaceQuiver} constructs an orientable surface~$\surface_{\bar Q}$ with boundaries.
\fref{fig:surfaces} illustrates this construction for the quivers of \fref{fig:quivers}.

\begin{figure}[t]
	\capstart
	\centerline{\includegraphics[scale=.7]{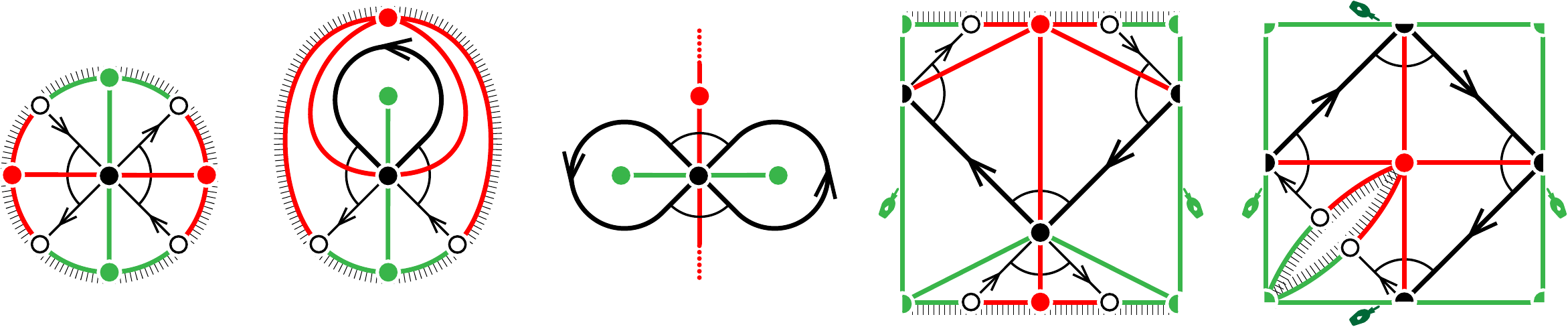}}
	\caption{The surface~$\surface_{\bar Q}$ for the quivers~$\bar Q$ of \fref{fig:quivers}.}
	\label{fig:surfaces}
\end{figure}

\begin{remark}
We could equivalently construct the surface~$\surface_{\bar Q}$ as follows:
\begin{itemize}
\item consider one $(\ell+1)$-gon~$P_\omega$ for each straight walk~$\omega$ of length~$\ell$ in~$\bar Q$ or in~$\bar Q\koszul$ (just closing~$\omega$ with an additional edge connecting~$t(\omega)$ to~$s(\omega)$),
\item for each arrow~$\alpha \in Q_1\blossom$, consider the only straight walks~$\omega$ of~$\bar Q$ and~$\omega\koszul$ of~$\bar Q\koszul$ containing~$\alpha$, and glue the polygons~$P_\omega$ and~$P_{\omega\koszul}$ along their edges corresponding to~$\alpha$.
\end{itemize}
\end{remark}

\enlargethispage{-.4cm}
The advantage of Definition~\ref{def:surfaceQuiver} is that it automatically endows~$\surface_{\bar Q}$ with two disjoint sets~$V_{\bar Q}$ and~${{V\dual}\!\!_{\bar Q}}$ of marked points and two dual cellular dissections~$\dissection_{\bar Q}$ and~${\dissection\dual\!\!_{\bar Q}}$ defined as follows.

\begin{definition}
\label{def:dissectionQuiver}
The surface~$\surface_{\bar Q}$ is endowed with
\begin{itemize}
\item the set~$V_{\bar Q}$ of points~$v(\alpha)$ for~$\alpha \in Q_1\blossom$ after the identifications given by~(ii),
\item the $V_{\bar Q}$-dissection~$\dissection_{\bar Q}$ given by all sides~$\Enrs{\alpha}$ and~$\Enrt{\alpha}$ for~$\alpha \in Q_1\blossom$ after the identifications given by~(ii).
\end{itemize}
The set~${{V\dual}\!\!_{\bar Q}}$ and the ${{V\dual}\!\!_{\bar Q}}$-dissection~${\dissection\dual\!\!_{\bar Q}}$ are defined dually.
\end{definition}

\begin{proposition}
\label{prop:dissectionsAreCellular}
Let~$\bar Q$ be a locally gentle bound quiver.
Then the dissections~$\dissection_{\bar Q}$ and~$\dissection\dual\!\!_{\bar Q}$ are cellular and dual to each other.
\end{proposition}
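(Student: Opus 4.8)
The plan is to read off the combinatorial data of both dissections directly from the lozange tiling of Definition~\ref{def:surfaceQuiver}, and then to obtain duality either by checking the incidence condition by hand or, more economically, by invoking Proposition~\ref{prop:conditionsDualDissections}. First I would record the local structure of the tiling. The surface~$\surface_{\bar Q}$ is the union of the closed lozanges~$L(\alpha)$, one per arrow~$\alpha \in Q_1\blossom$, each a topological disk, glued only along their green and red sides according to~(ii). Around an internal vertex~$a \in Q_0\blossom$ (a degree-four vertex, and by Remark~\ref{rem:sizeBlossomingQuiver} these are exactly the vertices of~$Q_0$), the gentleness condition~(iii) produces two incoming arrows~$\alpha_1,\alpha_2$ and two outgoing arrows~$\beta_1,\beta_2$ with, after relabelling,~$\alpha_i\beta_i \notin I$ and~$\alpha_i\beta_{3-i} \in I$; tracing the identifications~(ii) shows that the four corners at~$a$ are cyclically glued along alternately green and red sides, so that a single green edge and a single red edge cross transversally at~$a$. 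Around a blossom vertex only one corner occurs and~$a$ lies on the boundary. In particular every interior green edge of~$\dissection_{\bar Q}$ is an arc between two points of~$V_{\bar Q}$ passing through exactly one such~$a$, where it is crossed by the red edge indexed by the same~$a$; this crossing is the edge-level duality, and the blossom corners supply the boundary arcs along which the points of~$V_{\bar Q}$ and~$V\dual_{\bar Q}$ alternate.

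Next I would identify the faces of~$\dissection_{\bar Q}$. Cutting~$\surface_{\bar Q}$ along the green edges leaves the interiors of the lozanges, which remain connected exactly across their red sides; two lozanges share a red side precisely when~$\alpha\beta \in I$, and such a gluing identifies their red vertices~$f\dual(\alpha) = f\dual(\beta)$. Hence the lozanges contributing to one face are exactly those whose red vertex equals a fixed~$f\dual$, and they are glued in the cyclic order dictated by the maximal sequence of arrows~$\ldots\alpha_{-1}\alpha_0\alpha_1\ldots$ with consecutive products in~$I$, a sequence that is unique at each step by gentleness. Because~$Q\blossom$ is finite, this sequence either terminates at blossom arrows on both ends — giving a finite fan of lozanges, a disk with~$f\dual$ on a boundary edge — or is purely periodic — giving a finite cyclic wheel of lozanges, a disk with~$f\dual$ as an interior puncture. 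Orientability of~$\surface_{\bar Q}$ guarantees the wheel wraps once around~$f\dual$ without twisting, so in both cases the face is a topological disk. This proves that~$\dissection_{\bar Q}$ is cellular, and it simultaneously yields a bijection~$\faces(\dissection_{\bar Q}) \leftrightarrow V\dual_{\bar Q}$,~$f \mapsto f\dual$, under which each face contains exactly one point of~$V\dual_{\bar Q}$ and at most one boundary edge.

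Finally I would deduce duality. Condition~(ii) of Proposition~\ref{prop:conditionsDualDissections} being met for the marked surface~$(\surface_{\bar Q}, V_{\bar Q} \sqcup V\dual_{\bar Q})$, there is a unique cellular dissection dual to~$\dissection_{\bar Q}$, obtained by joining each~$f\dual$ to the midpoints of the green edges bounding its face that are not boundary arcs of~$\surface_{\bar Q}$. By the local picture of the first step these midpoints are precisely the internal vertices~$a$, and the two half-edges meeting at~$a$ are exactly the two red sides crossing there, namely~$\Ert{\alpha}$ and~$\Ers{\beta}$ with~$\alpha\beta \in I$. Thus the canonical dual coincides with the red dissection~$\dissection\dual_{\bar Q}$ of Definition~\ref{def:dissectionQuiver}, which is therefore cellular and dual to~$\dissection_{\bar Q}$; the symmetric roles of the green and red sides furnish the bijection~$\faces(\dissection\dual_{\bar Q}) \leftrightarrow V_{\bar Q}$ as well, completing the proof.

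The step I expect to be the main obstacle is the cellularity argument of the second paragraph, specifically controlling the purely periodic case, which is invisible in the finite-dimensional (gentle) setting. One must argue that an infinite relation-sequence is forced to be a genuine cycle producing a single cone point rather than a branched or non-disk configuration; this is exactly where finiteness of~$Q\blossom$, the unique continuation of relation-paths guaranteed by gentleness, and the orientability of~$\surface_{\bar Q}$ are all needed together.
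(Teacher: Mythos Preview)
Your argument is correct and is essentially the paper's proof viewed through the dual colour. The paper shows cellularity of~$\dissection\dual_{\bar Q}$ by analysing the star of each green vertex~$v \in V_{\bar Q}$ (indexed by maximal paths of~$\bar Q\blossom$) and observing that the red edges enclose~$v$ in a polygon; you instead show cellularity of~$\dissection_{\bar Q}$ by analysing the star of each red vertex~$f\dual \in V\dual_{\bar Q}$ (indexed by maximal relation-sequences) and observing that the green edges enclose~$f\dual$ in a polygon. For duality, the paper simply notes that each interior green edge crosses exactly one red edge (your first paragraph), whereas you route through Proposition~\ref{prop:conditionsDualDissections}; both are one-line conclusions once the cells are understood.

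One small correction: your appeal to orientability in the periodic case is not the right lever. That the wheel of lozanges wraps exactly once around~$f\dual$ follows directly from the fact that the arrows in one minimal period of the relation-sequence are pairwise distinct (unique continuation in a locally gentle quiver, plus finiteness of~$Q\blossom$), so the lozanges~$L(\alpha_1), \dots, L(\alpha_n)$ are distinct and the oriented gluing rule of Definition~\ref{def:surfaceQuiver} assembles them into an honest~$2n$-gon with~$f\dual$ at its centre. Orientability of the face is a consequence of that oriented gluing, not an independent hypothesis you need to invoke.
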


\begin{proof}
In the construction of~$\surface_{\bar Q}$, if two arrows~$\alpha$ and~$\beta$ of~$\bar Q \blossom$ are such that~${t(\alpha) = s(\beta)}$ and~${\alpha\beta \in I\blossom}$, then the vertices~$v(\alpha)$ and~$v(\beta)$ are identified on the surface.
Thus, any two arrows of a given path contribute the same vertex of~$V_{\bar Q}$, so~$V_{\bar Q}$ is in bijection with the set of maximal paths in~$\bar Q\blossom$.

Let~$v\in V_{\bar Q}$. The situation around~$v$ is as follows, depending on whether~$v$ corresponds to a finite or infinite maximal path of~$\bar Q\blossom$:

\begin{figure}[h]
	\capstart
	\centerline{\includegraphics[scale=.7]{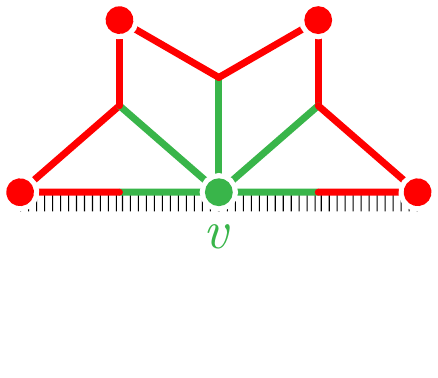} \qquad \includegraphics[scale=.7]{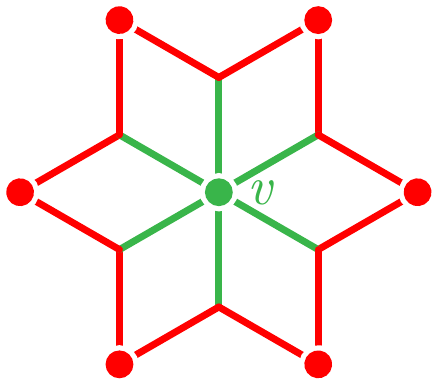}}
	\caption{Two possible situations around~$v$: on the left,~$v$ corresponds to a finite maximal path of~$\bar Q$, while on the right,~$v$ corresponds to an infinite one.}
	\label{fig:stars}
\end{figure}

In both cases, we see that~$v$ is enclosed in a polygon of~$\dissection_{\bar Q}\dual$.  This shows that~$\dissection_{\bar Q}\dual$ is a cellular dissection of~$\surface_{\bar Q}$.
Dually, one shows that~$\dissection_{\bar Q}$ is also a cellular dissection.

Moreover, by construction, each edge of~$\dissection_{\bar Q}$ which is not on the boundary of~$\surface_{\bar Q}$ crosses exactly one edge of~$\dissection_{\bar Q}\dual$, and vice versa.
This shows that~$\dissection_{\bar Q}$ and~$\dissection_{\bar Q}\dual$ are dual to each other.
\end{proof}

The following statement immediately follows from the definitions and was probably already observed by the reader on \fref{fig:surfaces}.

\begin{theorem}
\label{thm:bijectionLocallyGentleAndSurfaces}
Up to isomorphism, the constructions of Definitions \ref{def:quiverDualDissections} and \ref{def:surfaceQuiver} are inverse~to~each other.
They induce a bijection between the set of isomorphism classes of locally gentle bound quivers and the set of homeomorphism classes of marked surfaces with a pair of dual cellular dissections.
\end{theorem}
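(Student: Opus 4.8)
The plan is to verify directly that the two constructions compose to the identity in both orders. Since Proposition~\ref{prop:dissectionsAreCellular} and Lemma~\ref{lemm:quiverOfDissectionIsLocallyGentle} already guarantee that each construction lands in the correct class of objects, this reduces to matching combinatorial data on one side and producing an explicit homeomorphism on the other. Write~$\Phi$ for the map of Definition~\ref{def:surfaceQuiver} sending a locally gentle bound quiver~$\bar Q$ to the surface~$\surface_{\bar Q}$ with its pair of dual dissections~$(\dissection_{\bar Q}, \dissection_{\bar Q}\dual)$, and~$\Psi$ for the map of Definition~\ref{def:quiverDualDissections} sending~$(\dissection, \dissection\dual)$ to~$\bar Q_{\dissection}$. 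I will treat~$\Psi \circ \Phi \cong \mathrm{id}$ and~$\Phi \circ \Psi \cong \mathrm{id}$ separately.

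For~$\Psi \circ \Phi$, the key is the local picture around the vertices of~$\bar Q\blossom$ already analysed in the proof of Proposition~\ref{prop:dissectionsAreCellular} and depicted in~\fref{fig:stars}. First I would recall that the points of~$V_{\bar Q}$ are in bijection with the maximal paths of~$\bar Q\blossom$, all the points~$v(\alpha)$ along a non-relation path being identified. Then, inspecting the gluing rule~(ii) around a black vertex~$c \in Q\blossom_0$, one sees that exactly two green edges are incident to~$c$ and that they sit on opposite sides of~$c$ in the cyclic order green–red–green–red of the (at most four) lozenges surrounding~$c$; hence they merge into a single arc of~$\dissection_{\bar Q}$ passing through~$c$. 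This yields a bijection between the edges of~$\dissection_{\bar Q}$ and the vertices of~$Q\blossom$, restricting to a bijection between the interior edges and~$Q_0$. Under this bijection, two edges of~$\dissection_{\bar Q}$ are consecutive counterclockwise around a point of~$V_{\bar Q}$ precisely when the corresponding vertices of~$\bar Q\blossom$ are joined by an arrow (consecutive vertices along a maximal non-relation path), so the arrows produced by Definition~\ref{def:quiverDualDissections}(ii) are exactly the arrows of~$\bar Q$. Dually, the faces of~$\dissection_{\bar Q}$ correspond to points of~$V_{\bar Q}\dual$, i.e.\ to maximal paths of~$(\bar Q\blossom)\koszul$, and a triple of consecutive edges in a face is exactly a relation of~$I$. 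Assembling these three bijections gives an isomorphism of bound quivers~$\bar Q_{\dissection_{\bar Q}} \cong \bar Q$.

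For~$\Phi \circ \Psi$, starting from~$(\dissection, \dissection\dual)$ I would build~$\surface_{\bar Q_{\dissection}}$ and exhibit an explicit homeomorphism to~$\surface$ carrying~$\dissection_{\bar Q_{\dissection}}$ to~$\dissection$ and~$\dissection_{\bar Q_{\dissection}}\dual$ to~$\dissection\dual$. Each arrow~$\alpha$ of~$\bar Q_{\dissection}\blossom$ is, by Definition~\ref{def:quiverDualDissections}(ii), a pair of edges~$a = s(\alpha)$,~$b = t(\alpha)$ of~$\dissection$ consecutive counterclockwise around a common vertex~$w$; I would map the lozenge~$L(\alpha)$ to the quadrilateral cut out in~$\surface$ by the corner at~$w$, sending~$v(\alpha) \mapsto w$, the two black corners to points on~$a$ and~$b$, the red corner~$f\dual(\alpha)$ to the dual vertex of the face of~$\dissection$ at that corner, and the green edges to the two half-arcs joining~$w$ to~$a$ and~$b$. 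The two clauses of the gluing rule~(ii) then match the two ways corners can be adjacent in~$\dissection$: gluing along green edges ($\alpha\beta \notin I$) corresponds to two corners sharing the vertex~$w$, and gluing along red edges ($\alpha\beta \in I$) corresponds to two corners lying in a common face of~$\dissection$. Since~$\dissection$ is cellular, these quadrilaterals tile~$\surface$ and their gluing is forced by the incidences; the resulting piecewise map is the desired homeomorphism. The fact that it simultaneously carries~$\dissection\dual$ to~$\dissection_{\bar Q_{\dissection}}\dual$ follows from the symmetry of the whole construction under swapping~$\dissection \leftrightarrow \dissection\dual$ and~$\bar Q \leftrightarrow \bar Q\koszul$, recorded in Proposition~\ref{prop:dualityKoszul1}.

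The step I expect to be the main obstacle is the topological content of~$\Phi \circ \Psi$: one must produce a genuine homeomorphism rather than merely matching combinatorial data. The enabling fact is cellularity, which holds by the standing convention and is reestablished by Proposition~\ref{prop:dissectionsAreCellular}: a cellular dissection endows the surface with a CW structure whose homeomorphism type is determined by the edge–face incidences together with the cyclic orders of edges around vertices. It therefore suffices to check that the lozenge gluing reproduces exactly these incidences and cyclic orders, reducing the topological claim to the bookkeeping above. A secondary point requiring care is the infinite, cyclic case, namely maximal paths of~$\bar Q\blossom$ that wind around a puncture, where one must check that the bijections ``maximal paths~$\leftrightarrow$~points of~$V_{\bar Q}$'' and ``arcs~$\leftrightarrow$~black vertices'' persist; the infinite-star picture on the right of~\fref{fig:stars} handles this, the arc through each black vertex closing up around the puncture.
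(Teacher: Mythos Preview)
Your proposal is correct and follows the only natural route: verify directly that~$\Psi\circ\Phi$ and~$\Phi\circ\Psi$ are the identity by matching the combinatorial data. The paper's own ``proof'' is in fact a single sentence asserting that the statement ``immediately follows from the definitions and was probably already observed by the reader on \fref{fig:surfaces}'', so you have supplied precisely the details the authors chose to omit; your identification of the cellularity hypothesis as what reduces the topological claim in~$\Phi\circ\Psi$ to combinatorics is the right emphasis and is implicit in the paper's standing convention.
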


\begin{remark}
\label{rem:propertiesSurface}
The following observations are useful for the computation of examples.
\begin{enumerate}[(i)]
\item The set~$V_{\bar Q}$ has one vertex for each straight walk in~$\bar Q$ (equivalently, for each maximal path in~$\bar Q$).
      Finite straight walks yield vertices on the boundary of~$\surface_{\bar Q}$, while infinite cyclic straight walks in~$\bar Q$ yield punctures~$\surface_{\bar Q}$ in~$V_{\bar Q}$.
      We denote by~$p$ the number of infinite cyclic straight walks in~$\bar Q$.
\item \label{item:edges}
      The dissection~$\dissection_{\bar Q}$ has one edge for each vertex~$a \in \bar Q_0$, obtained by concatenation of the sides~$\Enrt{\alpha} = \Enrs{\beta}$ and~$\Enrt{\alpha'} = \Enrs{\beta'}$ where~$a = t(\alpha) = s(\beta) = t(\alpha') = s(\beta')$, $\alpha\beta \notin I$ and~$\alpha'\beta' \notin I$. We denote by~$\edgeof(a)$ the edge of~$\dissection_{\bar Q}$ corresponding to~$a$.
\item The dissection~$\dissection_{\bar Q}$ has one $\ell$-cell for each straight walk of length~$\ell$ in~$\bar Q\koszul$.
\item Similar statements hold dually for~${{V\dual}\!\!_{\bar Q}}$ and~${\dissection\dual\!\!_{\bar Q}}$, and the notations~$p\dual$ and~$\dualedgeof(a)$ are defined similarly.
\item The number of punctures of~$\surface_{\bar Q}$ is the number~$p + p\dual$ of infinite straight walks in~$\bar Q$ and in~$\bar Q\koszul$.
\item \label{item:boundary}
      The number~$b$ of boundary components of~$\surface_{\bar Q}$ can be computed as follows.
      There are two natural perfect matchings whose vertices are the blossom vertices of~$\bar Q$: one is obtained by joining the endpoints of each finite straight walk of~$\bar Q$, and the other is obtained similarly from~$\bar Q \koszul$.
      Let~$G$ be the superposition of these two perfect matchings.
      Then the number~$b$ of boundary components of~$\surface_{\bar Q}$ is the number of connected components of~$G$.
\item The genus of the surface~$\surface_{\bar Q}$ is
      \[
       g = \frac{|Q_1| - |Q_0| - b - p - p\dual + 2}{2},
      \]
      where~$b$ is the number of boundary components (see above for a way to compute~$b$) and~$p$ the number of punctures (\ie infinite straight walks in~$\bar Q$ and in~$\bar Q\koszul$).
      Indeed, ${2g = 2-\chi(\surface_{\bar Q})}$, where~$\chi(\surface_{\bar Q})$ is the Euler characteristic of~$\surface_{\bar Q}$.
      The dissection~$\dissection$ provides a cellular decomposition of~$\surface_{\bar Q}$ on which we fill the boundary components with disks.  
      The number of faces of this cellular decomposition is~$b + p\dual$, its number of edges is~$|Q_0|$ and its number of vertices is the number of straight walks in~$\bar Q$, which is equal to~$2|Q_0| - |Q_1| + p$.
      The above formula for~$g$ follows.
\end{enumerate}
\end{remark}

\begin{proposition}
\label{prop:dualityKoszul2}
For any gentle bound quiver~$\bar Q$ with Koszul dual~$\bar Q\koszul$, the surfaces~$\surface_{\bar Q}$ and~$\surface_{\bar Q\koszul}$ coincide, but~$\dissection_{\bar Q\koszul} = {\dissection\dual\!\!_{\bar Q}}$ and~${\dissection\dual\!\!_{\bar Q\koszul}} = \dissection_{\bar Q}$.
\end{proposition}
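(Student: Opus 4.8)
The plan is to deduce the proposition from the two results just established: Theorem~\ref{thm:bijectionLocallyGentleAndSurfaces}, that the quiver construction of Definition~\ref{def:quiverDualDissections} and the surface construction of Definition~\ref{def:surfaceQuiver} are mutually inverse, and Proposition~\ref{prop:dualityKoszul1}, that dual dissections produce Koszul dual quivers. The only extra ingredient is that ``being a pair of dual cellular dissections'' is a symmetric relation.

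First I would record that symmetry: if $\dissection$ and $\dissection\dual$ are dual cellular dissections of $(\surface, V \sqcup V\dual)$, then $\dissection\dual$ and $\dissection$ are dual cellular dissections of the same surface with the roles of $V$ and $V\dual$ exchanged (immediate from the symmetric definition, and compatible with the uniqueness in Proposition~\ref{prop:conditionsDualDissections}). Hence, starting from $\bar Q$ and forming $(\surface_{\bar Q}, \dissection_{\bar Q}, \dissection\dual_{\bar Q})$, the reversed pair $(\dissection\dual_{\bar Q}, \dissection_{\bar Q})$ is again a legitimate pair of dual cellular dissections of $\surface_{\bar Q}$ by Proposition~\ref{prop:dissectionsAreCellular}. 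I would then identify its underlying quiver: since the two constructions are mutually inverse, $\bar Q_{\dissection_{\bar Q}} = \bar Q$, so Proposition~\ref{prop:dualityKoszul1} applied to $(\dissection_{\bar Q}, \dissection\dual_{\bar Q})$ gives that $\bar Q_{\dissection\dual_{\bar Q}}$ is Koszul dual to $\bar Q$, i.e. $\bar Q_{\dissection\dual_{\bar Q}} = \bar Q\koszul$ (Koszul duality being involutive by Definition~\ref{defi:koszulDual}).

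It then remains to run the surface construction backwards on $\bar Q\koszul$. The reversed pair $(\dissection\dual_{\bar Q}, \dissection_{\bar Q})$ has quiver $\bar Q\koszul$ by the previous step, so applying the mutual-inverse statement of Theorem~\ref{thm:bijectionLocallyGentleAndSurfaces} to this pair yields $\surface_{\bar Q\koszul} = \surface_{\bar Q}$ together with $\dissection_{\bar Q\koszul} = \dissection\dual_{\bar Q}$ and $\dissection\dual_{\bar Q\koszul} = \dissection_{\bar Q}$, which is exactly the claim.

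I expect the main point requiring care to be bookkeeping rather than mathematics: one must check that ``the dual of $\dissection\dual_{\bar Q}$ is $\dissection_{\bar Q}$'' feeds into Theorem~\ref{thm:bijectionLocallyGentleAndSurfaces} with the two dissections attached to the correct marked-point sets and with no inadvertent reversal of orientation. The geometric fact underlying all of this is that the lozenge $L(\alpha)$ of $\bar Q$ and the lozenge $L(\alpha\koszul)$ of $\bar Q\koszul$ are the same quadrilateral, identified by the orientation-preserving $180^\circ$ rotation exchanging $v(\alpha) \leftrightarrow f\dual(\alpha)$ and $s(\alpha) \leftrightarrow t(\alpha)$; this rotation swaps the green sides with the red ones, and the gluing rules of Definition~\ref{def:surfaceQuiver} match under it precisely because $\beta\koszul\alpha\koszul \in I\koszul$ if and only if $\alpha\beta \notin I$. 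If one preferred to avoid the inverse-construction argument, the proposition could be proved directly by exhibiting this rotation for every arrow and checking the two gluing cases, the only delicate step there being that the rotations are globally compatible and orientation-preserving.
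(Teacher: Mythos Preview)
The paper states this proposition without proof, evidently regarding it as immediate from the lozenge construction of Definition~\ref{def:surfaceQuiver}. Your argument is correct, and in fact you sketch two valid routes.

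Your main argument, deducing the claim from Theorem~\ref{thm:bijectionLocallyGentleAndSurfaces} and Proposition~\ref{prop:dualityKoszul1} via the symmetry of the dual-dissection relation, is a clean indirect proof that avoids revisiting the construction. The only caveat is that Theorem~\ref{thm:bijectionLocallyGentleAndSurfaces} is stated ``up to isomorphism/homeomorphism'', so strictly speaking you obtain the identifications up to a homeomorphism of marked surfaces; this is of course all that the statement can mean. Your alternative direct argument, the $180^\circ$ rotation of each lozenge $L(\alpha)$ swapping $v(\alpha) \leftrightarrow f\dual(\alpha)$ and $s(\alpha) \leftrightarrow t(\alpha)$, is almost certainly what the authors have in mind: the construction of Definition~\ref{def:surfaceQuiver} is visibly symmetric under exchanging the green and red sides while reversing the arrow, and this symmetry is precisely Koszul duality since the two gluing rules (along green for $\alpha\beta \notin I$, along red for $\alpha\beta \in I$) are interchanged by $\beta\koszul\alpha\koszul \in I\koszul \Leftrightarrow \alpha\beta \notin I$. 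That route is more elementary and gives a concrete homeomorphism, whereas your main route is slicker but relies on the full force of the bijection theorem.
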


\begin{remark}
\label{rem:comparisonWithOPS}
The construction of the surface given in Definition \ref{def:surfaceQuiver} yields the same surface as the one constructed in \cite{OpperPlamondonSchroll}.
A notable difference is that in \cite{OpperPlamondonSchroll}, only~$\dissection_{\bar Q}$ is given, and~$\dissection\dual\!\!_{\bar Q}$ is deduced (it is called the dual lamination).
Another minor difference is that the construction in \cite{OpperPlamondonSchroll} is written only for gentle algebras, while here it is generalized to locally gentle algebras.

The major difference with \cite{OpperPlamondonSchroll} is the application of the surface of a (locally) gentle algebra: there, it is used as a model for the derived category of the gentle algebra (graded arcs correspond to indecomposable objects in that category), while here, it will be used as a model for walks in~$\bar Q$ ($\dissection_{\bar Q}$-accordions will correspond to walks in~$\bar{Q}$, see Proposition \ref{prop:walks=arcs}).
\end{remark}

We conclude this section with five families of examples that illustrate the computation of~$\surface_{\bar Q}$.

\begin{example}
\label{exm:CambrianPath}
Consider a \defn{Cambrian quiver}, that is any orientation of a line with no relations (\aka type~$A$ quiver).
Two such quivers are represented in \fref{fig:CambrianPathsQuivers}.
Note that we can choose to position the blossom vertices in such a way that for any arrow~$\beta$, the arrows~$\alpha$ and~$\gamma$ such that~$\alpha\beta \in I$ and~$\beta\gamma \in I$ are on the right of~$\beta$.

\begin{figure}[h]
	\capstart
	\centerline{\includegraphics[scale=.45]{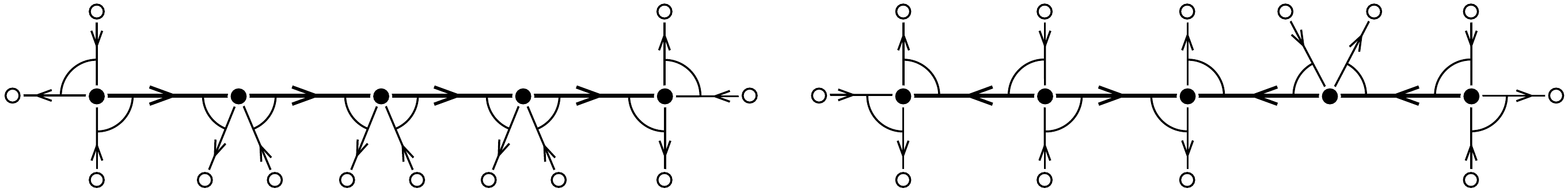}}
	\caption{Two Cambrian quivers on~$5$ vertices.}
	\label{fig:CambrianPathsQuivers}
\end{figure}

We apply Remark~\ref{rem:propertiesSurface} to understand the corresponding surfaces.
The corresponding perfect matchings (see Remark~\ref{rem:propertiesSurface}\,\eqref{item:boundary}) form a cycle with green and red arrows alternating.
Moreover, $\bar Q$ and~$\bar Q\koszul$ have no infinite straight walks.
Since~$|Q_1| = |Q_0|-1$, the corresponding surfaces are disks ($1$ boundary component, no puncture and genus~$0$).
Two examples are represented in \fref{fig:reversedPathsSurfaces}.
Note that the dissection~$\dissection_{\bar Q}$ is a triangulation with no internal triangles, and the orientation of~$\bar Q$ indicates how to glue these triangles.
For instance, the Cambrian quiver completely oriented in one direction yields a fan triangulation, were all internal edges are incident to the same vertex, as in \fref{fig:CambrianPathsSurfaces}\,(left).

\begin{figure}[h]
	\capstart
	\centerline{\includegraphics[scale=.7]{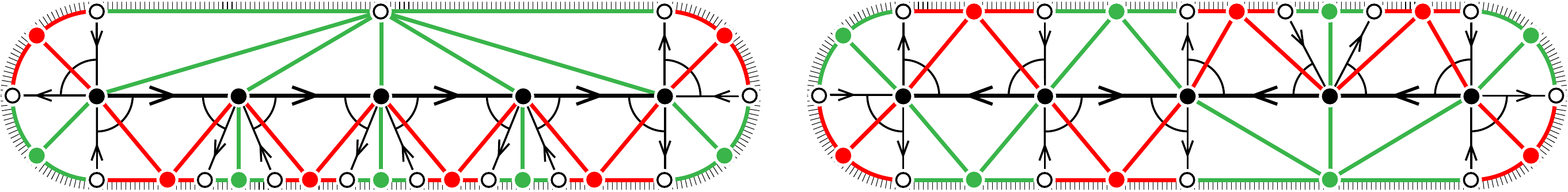}}
	\caption{The surface~$\surface_{\bar Q}$ for two Cambrian paths on~$5$ vertices.}
	\label{fig:CambrianPathsSurfaces}
\end{figure}
\end{example}

\begin{example}
\label{exm:reversedPath}
Consider the family of \defn{reversed path quivers} indicated in \fref{fig:reversedPathsQuivers}.

\begin{figure}[h]
	\capstart
	\centerline{\includegraphics[scale=.45]{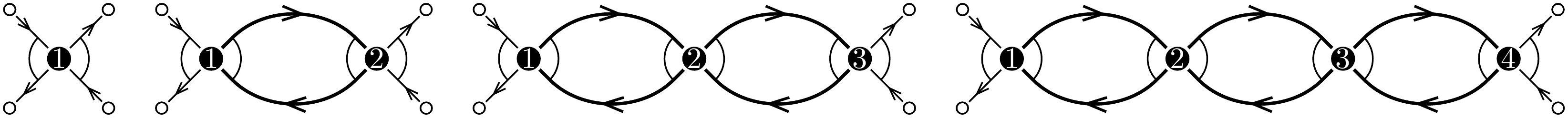}}
	\caption{The reversed path quivers with~$1, 2, 3, 4$ vertices.}
	\label{fig:reversedPathsQuivers}
\end{figure}

We apply Remark~\ref{rem:propertiesSurface} to understand the corresponding surfaces.
The corresponding perfect matchings (see Remark~\ref{rem:propertiesSurface}\,\eqref{item:boundary}) look like~\raisebox{-.1cm}{\includegraphics[scale=.3]{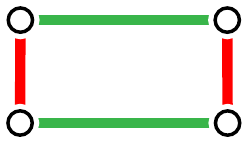}}.
Moreover, $\bar Q$ has no infinite straight walk, while~$\bar Q\koszul$ has~$|Q_0|-1$ infinite straight walks.
Since~$|Q_1| = 2|Q_0|-2$, the corresponding surfaces have $1$ boundary component and genus~$0$.
The first few corresponding surfaces are represented in \fref{fig:reversedPathsSurfaces}.

\begin{figure}[h]
	\capstart
	\centerline{\includegraphics[scale=.7]{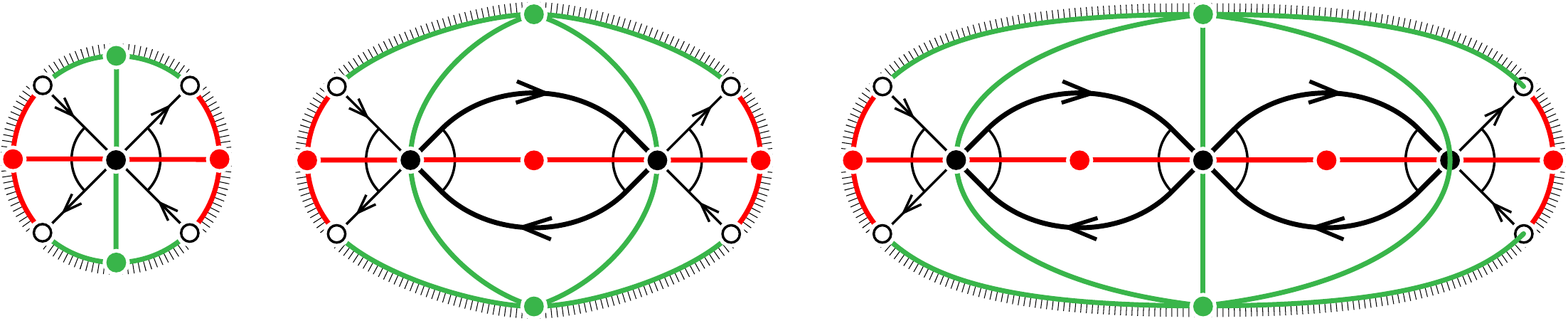}}
	\caption{The surface~$\surface_{\bar Q}$ for the reversed path quivers with~$1, 2, 3$ vertices.}
	\label{fig:reversedPathsSurfaces}
\end{figure}
\end{example}

\begin{example}
\label{exm:doublePath}
Consider the family of \defn{double path quivers} indicated in \fref{fig:doublePathsQuivers}.

\begin{figure}[h]
	\capstart
	\centerline{\includegraphics[scale=.45]{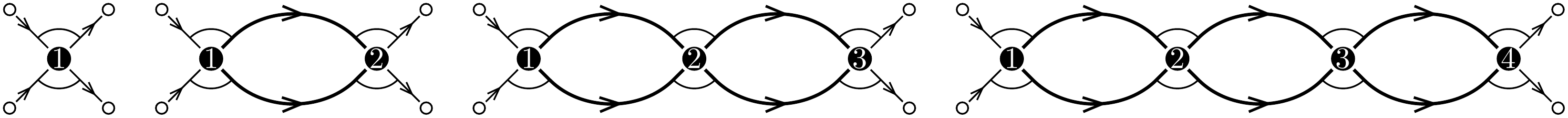}}
	\caption{The double path quivers with~$1, 2, 3, 4$ vertices.}
	\label{fig:doublePathsQuivers}
\end{figure}

We apply Remark~\ref{rem:propertiesSurface} to understand the corresponding surfaces.
The corresponding perfect matchings (see Remark~\ref{rem:propertiesSurface}\,\eqref{item:boundary}) look like~\raisebox{-.1cm}{\includegraphics[scale=.3]{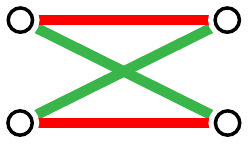}} when~$|Q_0|$ is odd and~\raisebox{-.1cm}{\includegraphics[scale=.3]{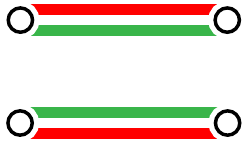}} when~$|Q_0|$ is even.
Moreover, $\bar Q$ and~$\bar Q\koszul$ have no infinite straight walks.
Since~$|Q_1| = 2|Q_0|-2$, the corresponding surfaces have no punctures and
\begin{itemize}
\item $1$ boundary component and genus~$(|Q_0|-1)/2$ when~$|Q_0|$ is odd, and 
\item $2$ boundary components and genus~$(|Q_0|-2)/2$ when~$|Q_0|$ is even.
\end{itemize}
The first few corresponding surfaces are represented in \fref{fig:doublePathsSurfaces}.

\begin{figure}[H]
	\capstart
	\centerline{\includegraphics[scale=.7]{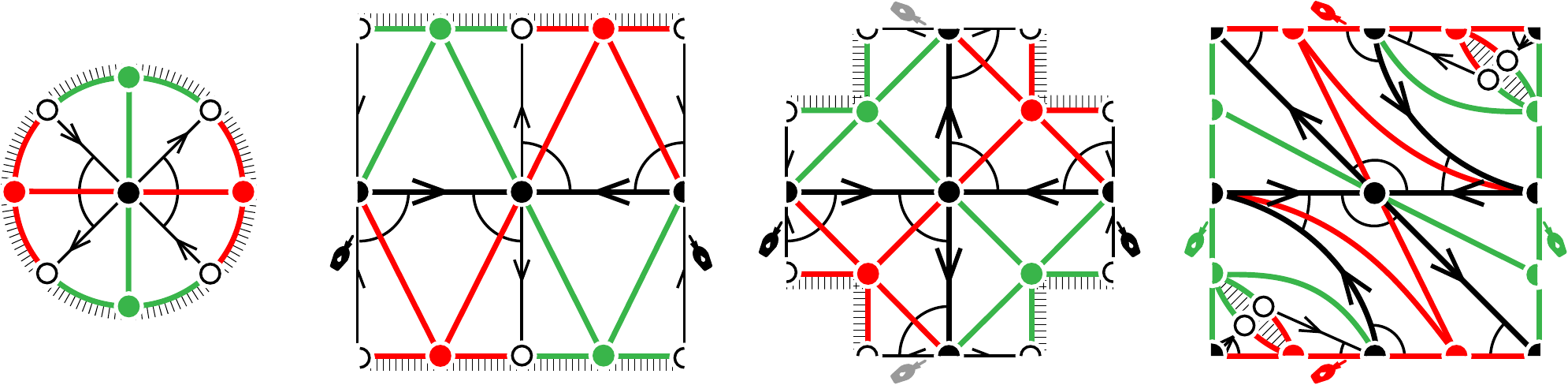}}
	\caption{The surface~$\surface_{\bar Q}$ for the double path quivers with~$1, 2, 3, 4$ vertices.}
	\label{fig:doublePathsSurfaces}
\end{figure}
\end{example}

\begin{example}
\label{exm:cycle}
Consider the family of \defn{cycle quivers} indicated in \fref{fig:cyclesQuivers}.

\begin{figure}[H]
	\capstart
	\centerline{\includegraphics[scale=.45]{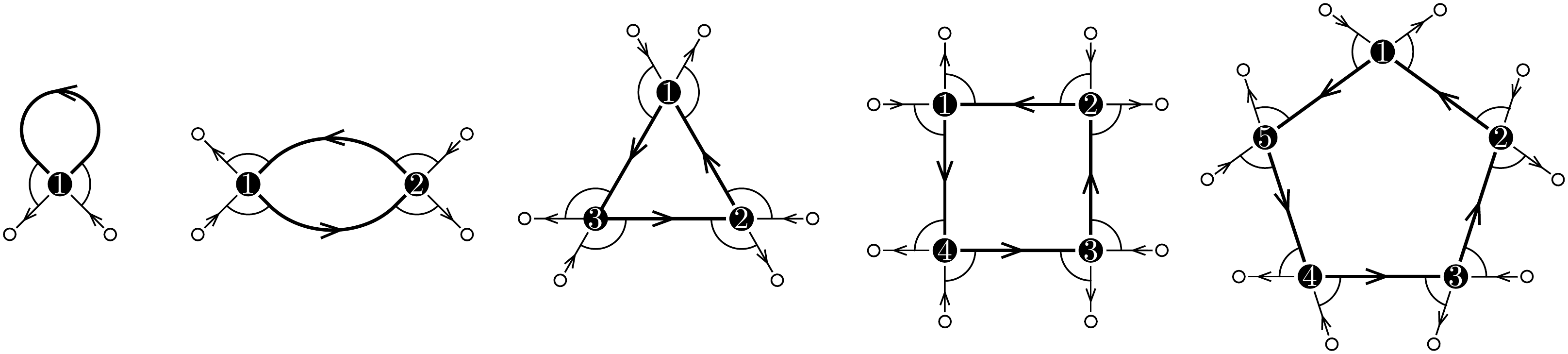}}
	\caption{The cycle quivers with~$1, 2, 3, 4, 5$ vertices.}
	\label{fig:cyclesQuivers}
\end{figure}

We apply Remark~\ref{rem:propertiesSurface} to understand the corresponding surfaces.
The corresponding perfect matchings (see Remark~\ref{rem:propertiesSurface}\,\eqref{item:boundary}) form a cycle with green and red arrows alternating.
Moreover, $\bar Q$ has~$1$ infinite straight walk, while~$\bar Q\koszul$ has none.
Since~$|Q_1| = |Q_0|$, the corresponding surfaces are punctured disks ($1$ boundary component, $1$ puncture and genus~$0$).
The first few corresponding surfaces are represented in \fref{fig:cyclesSurfaces} (see Figures~\ref{fig:surfaces} and~\ref{fig:reversedPathsSurfaces} for the cycles with $1$ or~$2$ vertices, recalling that~$\bar Q$ and~$\bar Q\koszul$ yield the same surface).

\begin{figure}[h]
	\capstart
	\centerline{\includegraphics[scale=.7]{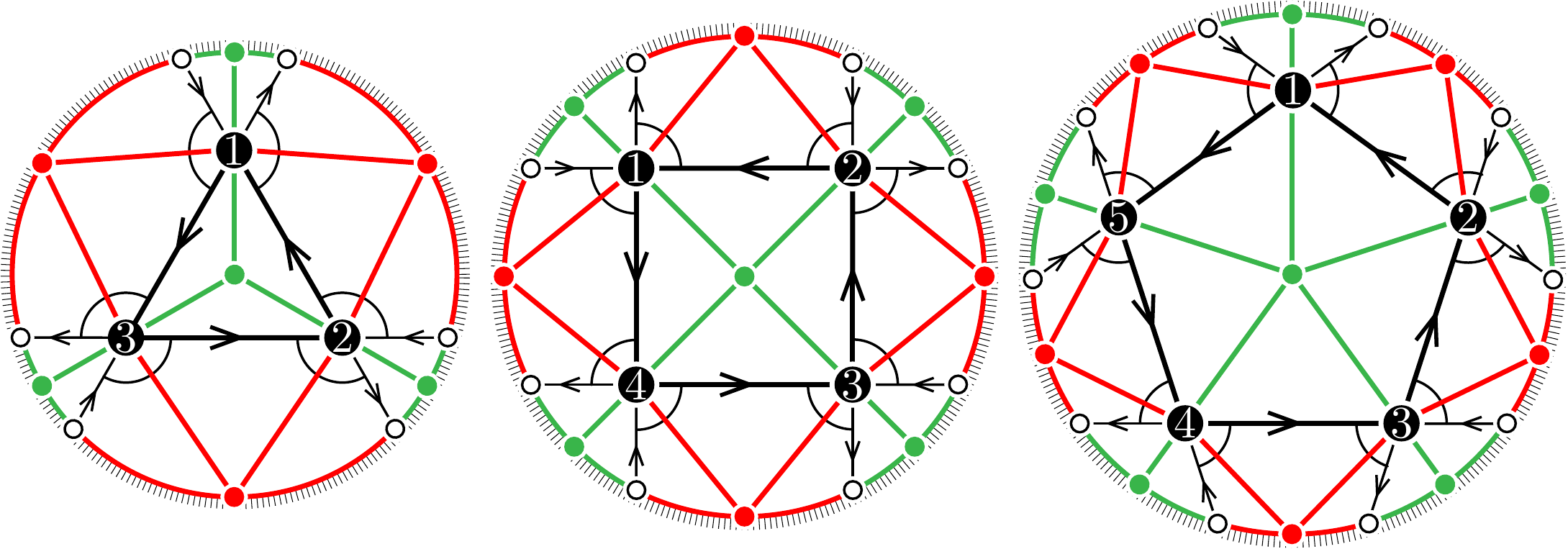}}
	\caption{The surface~$\surface_{\bar Q}$ for the cycle quivers with~$3, 4, 5$ vertices.}
	\label{fig:cyclesSurfaces}
\end{figure}
\end{example}

\begin{example}
\label{exm:doubleCycle}
Consider the family of \defn{double cycle quivers} indicated in \fref{fig:doubleCyclesQuivers}.

\begin{figure}[H]
	\capstart
	\centerline{\includegraphics[scale=.45]{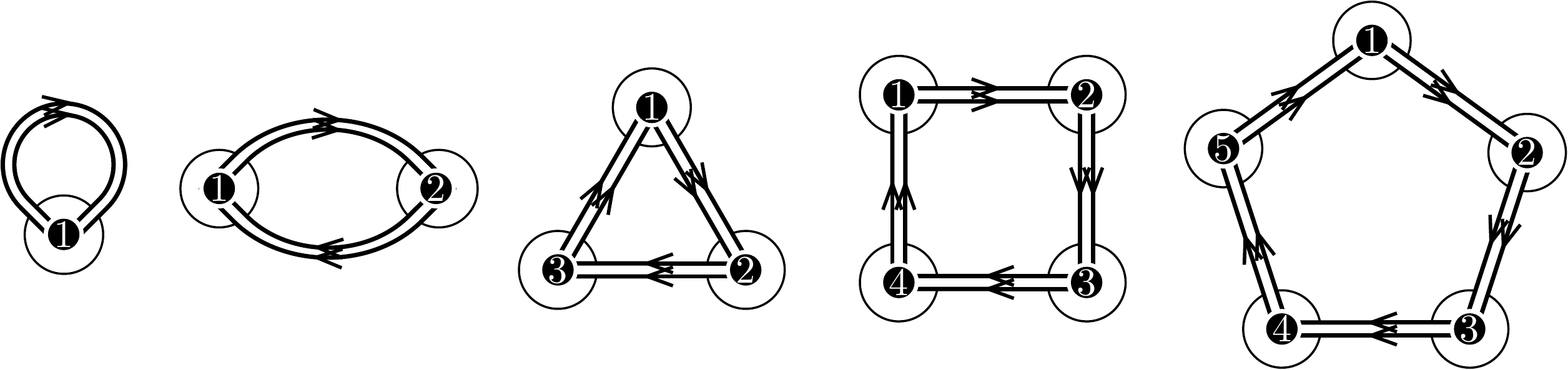}}
	\caption{The double cycle quivers with~$1, 2, 3, 4, 5$ vertices.}
	\label{fig:doubleCyclesQuivers}
\end{figure}

We apply Remark~\ref{rem:propertiesSurface} to understand the corresponding surfaces.
The corresponding perfect matchings (see Remark~\ref{rem:propertiesSurface}\,\eqref{item:boundary}) are empty since there are no blossom vertices.
Moreover, $\bar Q$ has~$1$ infinite straight walk when~$|Q_0|$ is odd and~$2$ when~$|Q_0|$ is even, and~$\bar Q\koszul$ always has~$2$ infinite straight walks.
Since~$|Q_1| = 2|Q_0|$, the corresponding surfaces have no boundary component and
\begin{itemize}
\item $3$ punctures and genus~$(|Q_0|-1)/2$ when~$|Q_0|$ is odd, and 
\item $4$ punctures and genus~$(|Q_0|-2)/2$ when~$|Q_0|$ is even.
\end{itemize}
The first few corresponding surfaces are represented in \fref{fig:doubleCyclesSurfaces}.

\begin{figure}[h]
	\capstart
	\centerline{\includegraphics[scale=.7]{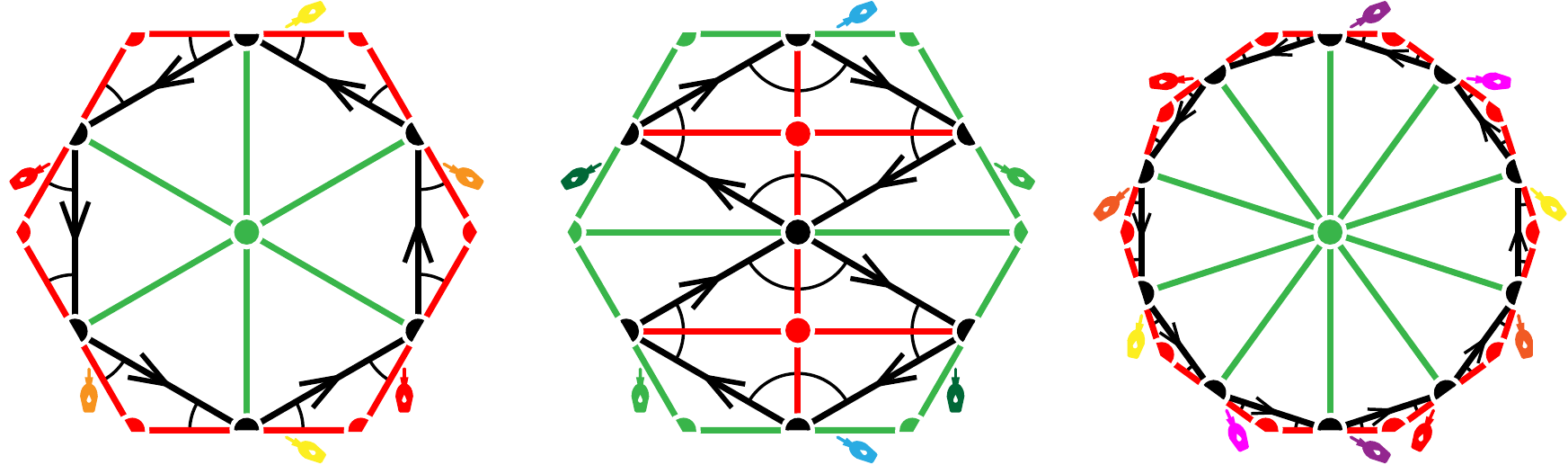}}
	\caption{The surface~$\surface_{\bar Q}$ for the double cycle quivers with~$3, 4, 5$ vertices.}
	\label{fig:doubleCyclesSurfaces}
\end{figure}
\end{example}

\subsection{Non-crossing and non-kissing complexes coincide}

Let~$\bar Q$ be a locally gentle bound quiver.
For each edge of the dissection~$\dissection_{\bar Q}$ on~$\surface_{\bar Q}$, we fix a point on the interior of this edge, which we call its ``middle point'' (this is the black vertex on the pictures).
To each walk on~$\bar Q$, we will now associate a~$\dissection_{\bar Q}$-accordion.

\begin{definition}
\label{def:curveOfAnArrow}
For any arrow~$\alpha$ on $\bar Q\blossom$, let~$\curveof(\alpha)$ be the curve on~$\surface_{\bar Q}$ which goes from the middle point of the edge of~$\dissection$ corresponding to $s(\alpha)$ to the middle point of the edge of~$\dissection$ corresponding to $t(\alpha)$ by following the angle corresponding to~$\alpha$.
Define~$\curveof(\alpha^{-1})$ to be~$\curveof(\alpha)^{-1}$.
\end{definition}

\begin{definition}
\label{def:curveOfAWalk}
Let~$\omega = \prod_{i < \ell < j} \alpha_\ell^{\varepsilon_\ell}$ be a (possibly infinite) walk on~$\bar Q$. 
Define the curve~$\curveof(\omega)$ to be the concatenation of the curves~$\curveof(\alpha_\ell^{\varepsilon_\ell})$ of Definition \ref{def:curveOfAnArrow}.
\end{definition}

In practice, we will represent~$\curveof(\omega)$ by a curve which intersects itself only transversaly, and such that if it circles infinitely around a puncture, then it spirals towards it.

\begin{lemma}
\label{lemm:curveOfAWalkIsAccordion}
Let~$\omega$ be a walk on~$\bar Q$.  Then~$\curveof(\omega)$ is a~$\dissection_{\bar Q}$-accordion.
\end{lemma}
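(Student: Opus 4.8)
The plan is to check the three conditions of Definition~\ref{def:accordion} by following $\curveof(\omega)$ through the faces of $\dissection_{\bar Q}$ one at a time. First I would record the local geometry of Definition~\ref{def:surfaceQuiver}: after the identifications~(ii), the midpoint of the $\dissection_{\bar Q}$-edge $\edgeof(a)$ is the quiver vertex $a$, where its two green half-edges meet; and for each arrow $\alpha$ of $\bar Q\blossom$ the arrow together with the red sides $\Ers{\alpha}$ and $\Ert{\alpha}$ of $L(\alpha)$ bound a triangle $T_\alpha$ with vertices $s(\alpha), t(\alpha), f\dual(\alpha)$. The only green edges that $T_\alpha$ meets are $\edgeof(s(\alpha))$ and $\edgeof(t(\alpha))$, touched solely at the midpoints $s(\alpha)$ and $t(\alpha)$; hence $T_\alpha$ lies in the single face $f(\alpha) \in \faces(\dissection_{\bar Q})$ whose dual vertex is $f\dual(\alpha)$, and the arc $\curveof(\alpha)$, drawn along the angle of $\alpha$, is a chord of $f(\alpha)$ inside $T_\alpha$ joining those two midpoints. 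Since $\omega$ is reduced and maximal in $\bar Q\blossom$, the concatenation $\curveof(\omega)$ is a curve without backtracking whose finite ends terminate at blossom vertices, i.e. at blossom points of $B$ on the boundary, while an eventually cyclic end repeats an oriented cycle $c$ with $c, c^2 \notin I$, all of whose green vertices are identified to one puncture $v \in V_{\bar Q}$ (Remark~\ref{rem:propertiesSurface}), into which the corresponding tail of $\curveof(\omega)$ spirals. Thus $\curveof(\omega)$ is a $B$-curve.

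Conditions~(i)--(iii) concern the behaviour of $\curveof(\omega)$ inside each face it meets, so I would verify them for a single chord $\curveof(\alpha_\ell^{\varepsilon_\ell})$ inside its face $f \eqdef f(\alpha_\ell)$, with dual vertex $f\dual$. Condition~(i) holds because the chord enters $f$ across $\edgeof(s(\alpha_\ell))$ and leaves across $\edgeof(t(\alpha_\ell))$, so the pass is finite and never circles $f\dual$ --- even when $f\dual$ is a puncture, a walk visiting $f$ several times does so through finitely many such chords. For~(ii), the green half-edges $\Enrs{\alpha_\ell}$ and $\Enrt{\alpha_\ell}$ lie in the two crossed edges $\edgeof(s(\alpha_\ell))$ and $\edgeof(t(\alpha_\ell))$ and share the green vertex $v(\alpha_\ell)$ on $\partial f$, so these two edges are consecutive along $\partial f$ (they may coincide when $\alpha_\ell$ is a loop, which is allowed). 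For~(iii), the disk bounded by $\curveof(\alpha_\ell)$ together with the boundary arcs $\Enrs{\alpha_\ell}$ and $\Enrt{\alpha_\ell}$ of the crossed edges is exactly the part of $f$ cut off on the $v(\alpha_\ell)$ side of the chord; since $f$ is a topological disk by Proposition~\ref{prop:dissectionsAreCellular} and the chord lies in $T_{\alpha_\ell}$, this disk contains the corner $v(\alpha_\ell)$ but not the apex $f\dual(\alpha_\ell)$.

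It remains to see that consecutive chords meet correctly, which is the only place where the relations must be used and where I expect the real bookkeeping. At an interior vertex $a$ of $\omega$ I would read the local star of the four sides $\Enrt{\alpha}, \Ert{\alpha}, \Enrt{\alpha'}, \Ert{\alpha'}$ around $a$: the green edge $\edgeof(a)$ and the red edge $\dualedgeof(a)$ cross transversally at $a$ and split a neighbourhood into the four lozenge corners. Running through the local types of the factor at $a$ --- straight ($\varepsilon_\ell = \varepsilon_{\ell+1}$, forcing the composable product to avoid $I$), peak ($\varepsilon_\ell = -1, \varepsilon_{\ell+1} = 1$), and deep ($\varepsilon_\ell = 1, \varepsilon_{\ell+1} = -1$) --- one checks in each case that the triangles of the two incident arrows lie on opposite sides of $\edgeof(a)$, their dual vertices being distinct precisely because the relevant product is not in $I$. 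Hence $\curveof(\omega)$ crosses $\edgeof(a)$ exactly once there and passes to the adjacent face, so every meeting of $\curveof(\omega)$ with a face of $\dissection_{\bar Q}$ is of the type analysed above and satisfies Definition~\ref{def:accordion}. This corner analysis, together with the matching check that the spiralling tail reaches the puncture $v$ without circling any dual vertex, is the main obstacle; the rest is the routine per-chord verification.
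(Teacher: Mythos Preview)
Your proof is correct and follows the same approach as the paper, namely checking that $\curveof(\omega)$ satisfies the accordion conditions because it is built from angle-following segments; the paper simply compresses everything into the one-line observation ``$\curveof(\omega)$ follows angles in~$\dissection_{\bar Q}$ by definition,'' whereas you carefully unpack the local geometry of the lozenges to verify each clause of Definition~\ref{def:accordion}. One minor point: in your third paragraph, the parenthetical ``their dual vertices being distinct precisely because the relevant product is not in~$I$'' is both unnecessary and not quite right (the two faces can coincide when $f\dual$ is a puncture, and at a peak or deep there is no product to speak of); what you actually need and do establish is only that the two triangles lie on opposite sides of~$\edgeof(a)$, so that the curve genuinely crosses that edge there.
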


\begin{proof}
This is because~$\curveof(\omega)$ follows angles in~$\dissection_{\bar Q}$ by definition.
\end{proof}

\begin{lemma}
\label{lemm:accordionsAreCurvesOfWalks}
Let~$\gamma$ be a~$\dissection_{\bar Q}$-accordion.  There exists a unique undirected walk~$\walk(\gamma)$ such that we have~$\curveof(\walk(\gamma)) = \gamma$.
\end{lemma}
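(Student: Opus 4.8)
The plan is to construct the inverse of the map $\omega \mapsto \curveof(\omega)$ explicitly, by recording how the accordion $\gamma$ traverses the edges of $\dissection_{\bar Q}$. By Lemma~\ref{lemm:curveOfAWalkIsAccordion} the map $\walk \mapsto \curveof(\walk)$ sends walks to accordions, so it suffices to show that every $\dissection_{\bar Q}$-accordion arises this way from a unique undirected walk. The key observation is that the edges of $\dissection_{\bar Q}$ are, by Remark~\ref{rem:propertiesSurface}\,\eqref{item:edges}, in bijection with the vertices of~$\bar Q$, so reading off the sequence of edges that $\gamma$ crosses should recover a word in the vertices of~$\bar Q\blossom$. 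Each consecutive pair of crossed edges determines, via the way $\gamma$ passes through the intervening $\dissection_{\bar Q}$-face, a unique arrow of~$\bar Q\blossom$ (or its inverse), since by the accordion conditions of Definition~\ref{def:accordion} the two edges are consecutive along the boundary of the face and share a vertex~$v$, which is exactly the data of an angle, i.e.\ an arrow between the corresponding vertices of~$Q\blossom$.

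Concretely, I would proceed as follows. First I would record the (possibly bi-infinite) sequence of edges $a_1, a_2, \dots$ of $\dissection_{\bar Q}$ that $\gamma$ crosses in order, reading~$\gamma$ with a chosen orientation. Second, for each consecutive pair $(a_\ell, a_{\ell+1})$, I would use the three accordion conditions to extract the corresponding arrow $\alpha_\ell^{\varepsilon_\ell}$ of $\bar Q\blossom$: condition~(ii) guarantees the two crossed edges are consecutive around a common vertex~$v$ of~$\dissection_{\bar Q}$, which under the construction of Definition~\ref{def:quiverDualDissections} is precisely a point giving rise to an arrow, and condition~(iii) (together with condition~(i) when $f\dual$ is a puncture) fixes the sign $\varepsilon_\ell$, i.e.\ the direction in which that angle is followed. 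Third, I would check that the resulting word $\walk(\gamma) = \prod_\ell \alpha_\ell^{\varepsilon_\ell}$ is a genuine walk: it is reduced and avoids the relations of~$I\blossom$ (again by condition~(iii), which forbids $\gamma$ from cutting off $f\dual$, the configuration that corresponds to a relation), and it is maximal in~$\bar Q\blossom$ because the $B$-curve $\gamma$ either ends at a blossom point or spirals into a puncture, matching the two terminating behaviors in Definition~\ref{def:walk}. The equality $\curveof(\walk(\gamma)) = \gamma$ then holds because $\curveof$ reconstructs, arrow by arrow, exactly the angles that were read off; uniqueness up to inversion follows since reversing the chosen orientation of~$\gamma$ replaces the word by its formal inverse, and the only choice made was this orientation.

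The main obstacle I anticipate is the careful bookkeeping at the ends of~$\gamma$ and in the presence of punctures. When a face~$f$ of~$\dissection_{\bar Q}$ has its dual point $f\dual$ a puncture, Definition~\ref{def:accordion}\,(i) forbids $\gamma$ from circling $f\dual$, so the local passage is still an honest angle; but I must ensure that the eventually-cyclic behavior of $\gamma$ spiraling into a puncture translates correctly into the eventually-cyclic-string structure of a maximal string in~$\bar Q\blossom$ (the $^\infty(c^{\varepsilon})$ part of a walk). Matching "$\gamma$ spirals into a puncture" with "the walk enters an infinite oriented cycle" requires invoking Remark~\ref{rem:propertiesSurface}\,(i), which identifies punctures of~$V_{\bar Q}$ with infinite cyclic straight walks; I would need to verify that the periodic sequence of edges crossed by the spiral is exactly the one produced by iterating such a cycle. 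The finite-endpoint case is comparatively routine: there $\gamma$ terminates at a blossom point~$B$, corresponding to a blossom vertex, so the word simply ends, as required for a maximal string.
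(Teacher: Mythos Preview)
Your proposal is correct and follows essentially the same approach as the paper's proof: put $\gamma$ in minimal position, record the sequence of edges of~$\dissection_{\bar Q}$ it crosses (well-defined because the dissection is cellular), read off an arrow of~$\bar Q\blossom$ from each consecutive pair via the corresponding angle, and observe that $\curveof$ and $\walk$ are mutually inverse by construction. The paper's argument is considerably terser and omits the verification details you outline (maximality at endpoints, avoidance of relations, the puncture/eventually-cyclic matching), simply asserting that the sequence of intersections defines a walk; your more careful treatment of these points is sound but not a departure in method.
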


\begin{proof}
We can assume that~$\gamma$ intersects itself and the edges of~$\dissection_{\bar Q}$ transversaly and minimally, in the sense that~$\gamma$ does not cross an edge twice in succession in opposite directions (see Definition~\ref{def:crossingCurves}).
Then~$\gamma$ is completely determined by its sequence of intersection points with the edges of~$\dissection_{\bar Q}$, since the dissection is cellular.
Two successive intersection points in this sequence define an angle between two edges of~$\dissection_{\bar Q}$, which corresponds to an arrow in~$\bar Q$.
Thus the sequence of intersection points defines a walk~$\walk(\gamma)$ on~$\bar Q$.
By construction, we have that~$\curveof(\walk(\gamma)) = \gamma$.
Is also follows from the construction above and from Definition~\ref{def:curveOfAWalk} that~$\walk(\curveof(\omega)) = \omega$ for any walk~$\omega$.
This finishes the proof.
\end{proof}

The above implies the following.

\begin{proposition}
\label{prop:walks=arcs}
The maps~$\curveof(-)$ and~$\walk(-)$ induce mutually inverse bijections between the set of undirected walks on~$\bar Q$ and the set of~$\dissection_{\bar Q}$-accordions on~$\surface_{\bar Q}$.
\end{proposition}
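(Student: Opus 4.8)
The plan is to assemble Proposition~\ref{prop:walks=arcs} directly from the two preceding lemmas, since they already do the essential geometric work. First I would observe that Lemma~\ref{lemm:curveOfAWalkIsAccordion} shows that $\curveof(-)$ sends each undirected walk on~$\bar Q$ to a~$\dissection_{\bar Q}$-accordion, so the map is well-defined on the correct target set. Conversely, Lemma~\ref{lemm:accordionsAreCurvesOfWalks} produces, for each~$\dissection_{\bar Q}$-accordion~$\gamma$, a walk~$\walk(\gamma)$, and the proof of that lemma already establishes both composition identities $\curveof(\walk(\gamma)) = \gamma$ and $\walk(\curveof(\omega)) = \omega$. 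So the statement is essentially a repackaging: the two maps are mutually inverse precisely because those two identities hold.

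The one point that genuinely requires care, and which I would make explicit, is the passage from \emph{directed} walks and curves to \emph{undirected} ones. Definition~\ref{def:curveOfAnArrow} sets $\curveof(\alpha^{-1}) = \curveof(\alpha)^{-1}$, so the curve of a walk and the curve of its inverse walk differ only by orientation reversal; since curves are considered up to homotopy and the accordion condition of Definition~\ref{def:accordion} is manifestly orientation-independent, $\curveof(\omega)$ and $\curveof(\omega^{-1})$ represent the same undirected accordion. I would therefore note that $\curveof(-)$ descends to a well-defined map on the set of undirected walks, and symmetrically that $\walk(-)$ returns an undirected walk (as stated in Lemma~\ref{lemm:accordionsAreCurvesOfWalks}). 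With both maps living on the undirected level, the identities from Lemma~\ref{lemm:accordionsAreCurvesOfWalks} read as inverse-bijection statements between the two sets.

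The main obstacle, such as it is, is not a new argument but ensuring the bookkeeping is airtight: namely that $\walk(\gamma)$ is well-defined \emph{independently} of the chosen minimal transverse representative of~$\gamma$. The proof of Lemma~\ref{lemm:accordionsAreCurvesOfWalks} invokes minimal position (via Definition~\ref{def:crossingCurves}) to guarantee that the sequence of intersection points of~$\gamma$ with the edges of~$\dissection_{\bar Q}$ is determined by the homotopy class, using that the dissection is cellular so that each face is a disk and an arc across a face is determined up to homotopy by its two boundary crossings. I would simply cite that the minimal-position assumption, justified earlier via~\cite{FreedmanHassScott} and~\cite{Neumann-Coto}, makes this intersection sequence a homotopy invariant, so $\walk(\gamma)$ depends only on the undirected accordion~$\gamma$ and not on its representative.

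Putting these together, the proof is short: $\curveof(-)$ and $\walk(-)$ are well-defined maps between the set of undirected walks on~$\bar Q$ and the set of~$\dissection_{\bar Q}$-accordions on~$\surface_{\bar Q}$ by Lemmas~\ref{lemm:curveOfAWalkIsAccordion} and~\ref{lemm:accordionsAreCurvesOfWalks}, and the two identities $\curveof(\walk(\gamma)) = \gamma$ and $\walk(\curveof(\omega)) = \omega$ established in the proof of Lemma~\ref{lemm:accordionsAreCurvesOfWalks} show they are mutually inverse, hence both are bijections.
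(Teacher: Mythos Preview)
Your proposal is correct and follows essentially the same approach as the paper, which simply states that the proposition follows from Lemmas~\ref{lemm:curveOfAWalkIsAccordion} and~\ref{lemm:accordionsAreCurvesOfWalks}. Your additional remarks on the passage to undirected objects and the well-definedness of~$\walk(\gamma)$ under change of representative are more careful than the paper's one-line justification, but they do not alter the underlying argument.
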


\begin{lemma}
\label{lem:nonKissing=nonCrossing}
Two undirected walks~$\omega_1$ and~$\omega_2$ on~$\bar Q$ are non-kissing if and only if the corresponding $\dissection_{\bar Q}$-accordions~$\curveof(\omega_1)$ and~$\curveof(\omega_2)$ are non-crossing on~$\surface_{\bar Q}$.
\end{lemma}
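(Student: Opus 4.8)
The plan is to transport the kissing relation through the bijection $\omega \mapsto \curveof(\omega)$ of Proposition~\ref{prop:walks=arcs} and to show that a kiss between $\omega_1$ and $\omega_2$ corresponds to a homotopically essential transversal intersection of $\curveof(\omega_1)$ and $\curveof(\omega_2)$. I keep the two curves in minimal position (Definition~\ref{def:crossingCurves}), so that they cross the edges of $\dissection_{\bar Q}$ transversally at their middle points and meet each other transversally; since $\dissection_{\bar Q}$ is cellular, each curve is reconstructed from its sequence of edge-crossings, \ie from its walk, exactly as in Lemma~\ref{lemm:accordionsAreCurvesOfWalks}. The two curves can then only interact along the maximal arcs on which they run parallel, and each such arc is recorded by a common substring $\sigma$ of $\omega_1$ and $\omega_2$ (possibly of length zero, \ie a single shared edge-crossing). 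Everything reduces to deciding, for each such $\sigma$, whether the curves cross as they peel off the common arc at its two ends.

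The geometric input is a local lemma relating the ``towards/outwards'' combinatorics at an end of $\sigma$ to the side on which the curve peels off. Both curves reach the edge $\edgeof(t(\sigma))$ having traversed $\sigma$ identically, hence with the same arrival; I would check, using the gentle conditions of Definition~\ref{def:gentleQuiver}(iii), that from this arrival there are exactly two reduced relation-free continuations, that they hug the two \emph{opposite} endpoints of $\edgeof(t(\sigma))$ in $V_{\bar Q}$, and that one points outwards while the other points towards $\sigma$. Consequently, when $\sigma$ is a maximal common substring, the two walks necessarily extend it by different letters at each end, one outwards and one towards. The model case is $\sigma$ a single vertex $a$: a peak of $\omega_1$ sends $\curveof(\omega_1)$ diagonally across $\edgeof(a)$ joining the two faces along one diagonal, while a deep of $\omega_2$ sends $\curveof(\omega_2)$ along the other diagonal, so the two curves form a genuine ``X'' at the middle of $\edgeof(a)$.

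Both implications follow once this local picture is assembled along all of $\sigma$. For ``kissing $\implies$ crossing'', a finite common substring $\sigma \in \Sigma_\top(\omega_1) \cap \Sigma_\bottom(\omega_2)$ is outward for $\omega_1$ and towards for $\omega_2$ at \emph{both} ends; tracking the hugged endpoints through the internal corners of $\sigma$ shows that the two curves peel off to linked sides at the two ends, so that the parallel run bounds no bigon and contributes exactly one essential crossing --- finiteness of $\sigma$ being what guarantees two genuine ends rather than a common tail spiralling into a shared puncture. Conversely, an essential crossing lies on a maximal common arc $\gamma_\sigma$; maximality and the local lemma force opposite orientations at each end, and the crossing being essential (rather than a removable bigon, which is what a ``mixed'' substring produces) forces these orientations to be consistent, \ie $\sigma$ on top of one walk and at the bottom of the other. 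Both arguments apply verbatim to $\omega_1 = \omega_2$, giving the equivalence of self-kissing and self-crossing.

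The main obstacle is the local lemma and, above all, its global assembly along $\sigma$. The difficulty is that the naive planar ``mountain and valley'' picture of two strings sharing $\sigma$ displays a kiss as a mere tangency, with one walk lying entirely above and the other entirely below; what turns this into a transversal crossing is that on $\surface_{\bar Q}$ each curve wraps around the green vertices $v \in V_{\bar Q}$ at the tips of the angles it follows, separating them from the dual points $f\dual$, so that a top substring and a bottom substring approach the common arc from linked sides (the ``X'' above). Making this precise requires determining, corner by corner, which endpoint of each crossed edge the curve hugs --- this depends on the arrival and on which length-two compositions lie in the ideal --- and then checking that these choices accumulate coherently across the internal bends of $\sigma$, all while keeping track of the finiteness clause of Definition~\ref{def:kiss}, without which a shared spiralling tail would be miscounted as a crossing.
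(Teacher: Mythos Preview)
Your proposal is correct and follows the same underlying idea as the paper's proof: a kiss along a common substring~$\sigma$ translates on~$\surface_{\bar Q}$ into the two curves running parallel along the arc~$\curveof(\sigma)$ and peeling off to linked sides at the two ends, hence crossing. The paper's own argument is essentially a proof by picture --- it draws the local configuration of a kiss on~$\bar Q$ and the corresponding configuration of~$\dissection_{\bar Q}$ and~$\dissection\dual\!\!_{\bar Q}$ on the surface, and declares the equivalence ``clear'' from the figure --- whereas you have written out what that picture is asserting: the local lemma identifying the two continuations at an end of~$\sigma$ with the two endpoints of~$\edgeof(t(\sigma))$, the top/bottom versus mixed dichotomy governing whether the parallel run yields an essential crossing or a bigon, and the role of the finiteness clause in Definition~\ref{def:kiss}. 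So there is no genuine difference in route, only in the level of detail; your last paragraph correctly flags the one step (coherent tracking of the hugged endpoints through the internal corners of~$\sigma$) that the paper leaves entirely to the figure.
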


\begin{proof}
A kiss between two walks~$\omega$ and~$\omega'$ is depicted on \fref{fig:kissings}.

\begin{figure}[h]
	\capstart
	\centerline{\includegraphics[scale=1]{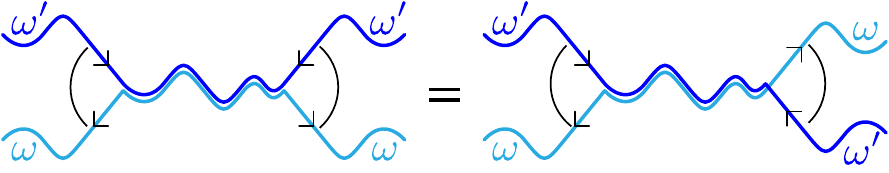}}
	\caption{Two representations of a pair of kissing walks.}
	\label{fig:kissings}
\end{figure}
The left hand side of the picture is as in \fref{fig:kissing}, and the right hand side is simply a different representation of it.
On the surface~$\surface_{\bar Q}$, the dual dissections~$\dissection$ and~$\dissection\dual$ are as on \fref{fig:kissingVSCrossing} (right).

\begin{figure}[h]
	\capstart
	\centerline{\includegraphics[scale=1]{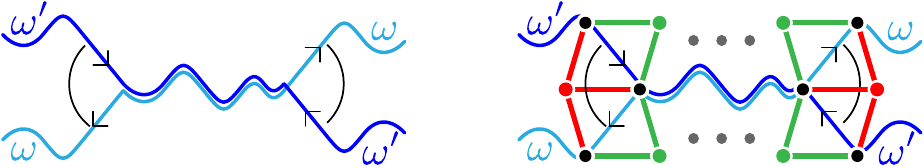}}
	\caption{Kissing walks on the quiver~$\bar Q$ correspond to crossing curves on the surface~$\surface_{\bar Q}$.}
	\label{fig:kissingVSCrossing}
\end{figure}

The curves~$\curveof(\omega)$ and~$\curveof(\omega')$ on the surface follow~$\omega$ and~$\omega'$ on the picture.
It is then clear that~$\omega$ and~$\omega'$ kiss if and only if~$\curveof(\omega)$ and~$\curveof(\omega')$ cross. 
\end{proof}

\begin{theorem}
\label{thm:complexesCoincide}
The non-kissing and non-crossing complexes are isomorphic:
\begin{itemize}
\item for any locally gentle bound quiver~$\bar Q$, the non-kissing complex~$\NKC$ is isomorphic to the non-crossing complex~$\NCC[\dissection_{\bar Q}, {\dissection\dual\!\!_{\bar Q}}]$,
\item for any pair of dual cellular dissections~$\dissection, \dissection\dual$ of an oriented surface, the non-crossing complex~$\NCC$ is isomorphic to the non-kissing complex~$\NKC[\bar Q_{\dissection, \dissection\dual}]$.
\end{itemize}
\end{theorem}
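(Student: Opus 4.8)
The plan is to assemble the vertex bijection of Proposition~\ref{prop:walks=arcs} with the compatibility statement of Lemma~\ref{lem:nonKissing=nonCrossing} into a simplicial isomorphism, using the structural observation that both complexes are \emph{flag}. Indeed, by Definitions~\ref{def:nKc} and~\ref{def:accordionComplex}, a collection of walks (resp.~of $\dissection_{\bar Q}$-accordions) is a face of~$\NKC$ (resp.~of~$\AC[\dissection_{\bar Q}]$) precisely when all its pairs are non-kissing (resp.~non-crossing). Hence each complex is the clique complex of its compatibility graph, and any bijection between the two vertex sets that preserves the compatibility relation automatically extends to an isomorphism of simplicial complexes.

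First I would treat the direction starting from a locally gentle bound quiver~$\bar Q$, recalling that~$\NCC[\dissection_{\bar Q}, {\dissection\dual\!\!_{\bar Q}}] = \AC[\dissection_{\bar Q}]$ by Definition~\ref{def:noncrossingComplex}. By Proposition~\ref{prop:walks=arcs}, the map~$\curveof(-)$ is a bijection from undirected walks on~$\bar Q$ onto the $\dissection_{\bar Q}$-accordions, with inverse~$\walk(-)$. Applying Lemma~\ref{lem:nonKissing=nonCrossing} to a pair~$\omega_1,\omega_2$ shows that~$\omega_1$ and~$\omega_2$ are non-kissing if and only if~$\curveof(\omega_1)$ and~$\curveof(\omega_2)$ are non-crossing; the special case~$\omega_1 = \omega_2$ matches the two vertex sets, since a self-kissing walk (excluded from~$\NKC$) corresponds exactly to a self-crossing accordion (excluded from~$\AC[\dissection_{\bar Q}]$). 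Thus~$\curveof(-)$ is an isomorphism of compatibility graphs, and by the flag property it extends to the desired isomorphism~$\NKC \cong \AC[\dissection_{\bar Q}] = \NCC[\dissection_{\bar Q}, {\dissection\dual\!\!_{\bar Q}}]$. The same argument, restricted to bending walks and internal accordions (straight walks corresponding to external accordions), yields the reduced version~$\RNKC \cong \RNCC[\dissection_{\bar Q}, {\dissection\dual\!\!_{\bar Q}}]$.

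For the converse direction, starting from a pair of dual cellular dissections~$(\dissection, \dissection\dual)$, I would feed the quiver~$\bar Q_{\dissection, \dissection\dual} = \bar Q_\dissection$ of Definition~\ref{def:quiverDualDissections} into the first direction. By Theorem~\ref{thm:bijectionLocallyGentleAndSurfaces}, the surface~$\surface_{\bar Q_\dissection}$ together with its dissections~$\dissection_{\bar Q_\dissection}$ and~${\dissection\dual\!\!_{\bar Q_\dissection}}$ is homeomorphic to~$(\surface, \dissection, \dissection\dual)$, and any such homeomorphism carries accordions to accordions, inducing an isomorphism~$\NCC[\dissection_{\bar Q_\dissection}, {\dissection\dual\!\!_{\bar Q_\dissection}}] \cong \NCC[\dissection, \dissection\dual]$. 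Composing with the isomorphism from the first direction gives~$\NKC[\bar Q_{\dissection, \dissection\dual}] \cong \NCC[\dissection, \dissection\dual]$, as required.

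The main obstacle is essentially absent at this level: all the genuine geometric and combinatorial content has already been packaged into Proposition~\ref{prop:walks=arcs} and Lemma~\ref{lem:nonKissing=nonCrossing}, so the only points deserving care are the bookkeeping ones, namely verifying that~$\curveof(-)$ matches non-self-kissing walks with non-self-crossing accordions (so that the complexes share corresponding vertex sets, not merely corresponding higher faces) and that the compatibility relation is symmetric, which is what legitimizes the clique-complex reconstruction. Beyond this, the argument is a purely formal combination of the preceding results with the flag property.
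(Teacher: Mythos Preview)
Your proof is correct and follows essentially the same route as the paper, which simply cites Proposition~\ref{prop:walks=arcs}, Lemma~\ref{lem:nonKissing=nonCrossing} and Proposition~\ref{prop:accordionsSlaloms} for the first point and Theorem~\ref{thm:bijectionLocallyGentleAndSurfaces} for the second. You are more explicit than the paper about the flag property and the self-kissing/self-crossing matching, but this is just unpacking what the paper leaves implicit.
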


\begin{proof}
The first point is a consequence of Proposition \ref{prop:walks=arcs}, Lemma \ref{lem:nonKissing=nonCrossing} and Proposition \ref{prop:accordionsSlaloms}.
The second point is a consequence of Theorem \ref{thm:bijectionLocallyGentleAndSurfaces} and of the first point.
\end{proof}

\begin{remark}
Note that our construction of the surface~$\surface_{\bar Q}$ implies that the quiver~$\bar Q$ is naturally embedded on~$\surface_{\bar Q}$.
If we force all $\dissection$-accordions (or $\dissection\dual$-slaloms) to follow the arrows of~$\bar Q$, then a $\dissection$-accordion (or $\dissection\dual$-slalom) is really seen as a walk on~$\bar Q$.
\end{remark}

We conclude with an important example of facets of the non-kissing and non-crossing complexes.

\begin{example}
\label{exm:peakDeepFacets}
We have seen in Definition~\ref{def:straightBending} that each vertex~$a$ of~$Q$ gives rise to a \defn{peak walk}~$a_\peak$ (resp.~\defn{deep walk}~$a_\deep$) with a single peak (resp.~deep) at~$a$ and no other corner.
The set of all such walks forms the \defn{peak facet}~$\set{a_\peak}{a \in Q_0}$ (resp.~the \defn{deep facet}~$\set{a_\deep}{a \in Q_0}$) of the non-kissing complex~$\RNKC$.
As an example of Theorem~\ref{thm:complexesCoincide}, let us now describe the corresponding non-crossing facets of~$\RNCC[\dissection_{\bar Q}, {{\dissection\dual}\!\!_{\bar Q}}]$.

For an edge~$a$ of~$\dissection$, we denote by~$\vprevious{a}$ (resp.~$\vnext{a}$) the curve obtained by moving each endpoint~$v$ of~$a$ as follows:
\begin{itemize}
\item if~$v$ is on the boundary of~$\surface$, then move (continuously) $v$ until it reaches the following blossom vertex along the boundary of~$\surface$ while keeping the boundary of~$\surface$ on the right,
\item if~$v$ is a puncture, then rotate around~$v$ in counterclockwise (resp.~clockwise) direction.
\end{itemize}
Examples are illustrated in \fref{fig:peakDeep}.
\begin{figure}[h]
	\capstart
	\centerline{\includegraphics[scale=.7]{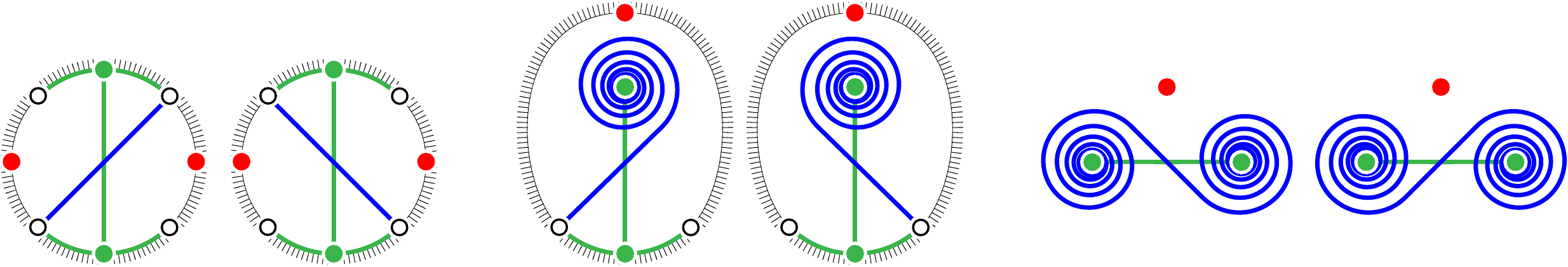}}
	\caption{The curves~$\vprevious{a}$ (left) and $\vnext{a}$ (right) associated to an edge~$a$ of~$\dissection$ for three different dissections.}
	\label{fig:peakDeep}
\end{figure}

We invite the reader to check that~$\curveof(a_\peak) = \vprevious{\edgeof(a)}$ (resp.~$\curveof(a_\deep) = \vnext{\edgeof(a)}$), where~$\edgeof(a)$ denotes the edge of~$\dissection_{\bar Q}$ corresponding to~$a \in Q_0$ (see Remark~\ref{rem:propertiesSurface}\,\eqref{item:edges}).
Therefore, the peak facet (resp.~the deep facet) can be thought of as the dissection~$\dissection_{\bar Q}$ slightly rotated clockwise (resp.~counterclockwise) on the surface~$\surface_{\bar Q}$.
\end{example}

\begin{remark}
According to Proposition~\ref{prop:dualityKoszul2} and Theorem~\ref{thm:complexesCoincide}, the simplicial complexes~$\NKC[\bar Q\koszul]$ and~$\NCC[{\dissection\dual\!\!_{\bar Q}}, \dissection_{\bar Q}]$ are isomorphic.
It would be interesting to find the precise link between~$\NKC$ and~$\NKC[\bar Q\koszul]$.
\end{remark}

\section{Properties of the non-crossing and non-kissing complexes}\label{sec:propertiesOfComplexes}

This section explores combinatorial properties of the non-crossing and non-kissing complexes, showing in particular that these complexes are pure and thin.
The proof follows ideas of~\cite{McConville, GarverMcConville} already reinterpreted for non-kissing complexes of gentle algebras in~\cite{PaluPilaudPlamondon}.
However, the setting of locally gentle algebras studied in this paper requires a little more care.

\subsection{The countercurrent order}

The countercurrent order was introduced in~\cite{McConville} for grids and defined in \cite{PaluPilaudPlamondon} for gentle algebras.
We now adapt it to locally gentle algebras.

\begin{definition}
A \defn{marked walk}~$\omega_\star$ is a walk~$\omega$ together with a marked occurence of an arrow~$\alpha^{\pm}$ in~$\omega$.
In the case where~$\omega$ is an infinite straight walk (so~$\omega$ is a cycle which repeats infinitely on both sides), we consider that all marked occurences of an arrow~$\alpha^\pm$ are equal.
Otherwise, if~$\omega$ contains several occurrences of~$\alpha^\pm$, only one occurrence is marked.
\end{definition}

The following definition is illustrated in \fref{fig:orderCurves}\,(left).

\begin{definition}
\label{def:orderForWalks}
For any arrow~$\alpha \in Q\blossom_1$ and any two distinct non-kissing walks~$\mu_\star, \nu_\star$ marked at an occurrence of~$\alpha^\pm$,
let~$\sigma$ denote their maximal common substring containing that occurrence of~$\alpha$.
Since~$\mu_\star \ne \nu_\star$, this common substring~$\sigma$ is strict, so~$\mu_\star$ and~$\nu_\star$ split at one endpoint of~$\sigma$.
The \defn{countercurrent order at~$\alpha$} is defined by~$\mu_\star \prec_\alpha \nu_\star$ when~$\mu_\star$ enters and/or exits~$\sigma$ in the direction pointed by~$\alpha$, while~$\nu_\star$ enters and/or exits~$\sigma$ in the direction opposite to~$\alpha$.
\end{definition}

\begin{remark}
Since the walks~$\mu$ and~$\nu$ are non-kissing, if~$\mu_\star$ leaves~$\sigma$ on both sides, then is enters and exits~$\sigma$ in the same direction, so that Definition~\ref{def:orderForWalks} is coherent.
\end{remark}

\begin{remark}
\label{rem:countercurrentOrderOnSurface}
Using Theorem \ref{thm:complexesCoincide}, we can translate the definition of the countercurrent order~$\prec_\alpha$ to the language of~$\dissection$-accordions (resp.~of~$\dissection\dual$-slaloms).
An arrow~$\alpha$ in~$\bar Q\blossom$ corresponds to an angle at a point~$v$ between two edges of~$\dissection$ (resp.~at a point~$f\dual$ between two edges of~$\dissection\dual$).
Two distinct non-kissing walks~$\mu_\star, \nu_\star$ marked at~$\alpha$ in~$\bar Q$ correspond to two distinct non-crossing~$\dissection$-accordions (resp.~$\dissection\dual$-slaloms)~$\gamma$ and~$\delta$ passing in front of this angle.
Then~$\gamma \prec_\alpha \delta$ if~$\gamma$ passes closer to~$v$ (resp.~further to~$f\dual$) than~$\delta$ does.
These interpretations of~$\prec_\alpha$ on the surface where already considered in the case of the disk in~\cite{GarverMcConville, MannevillePilaud-accordion}.
\begin{figure}[h]
	\capstart
	\centerline{\includegraphics[scale=1]{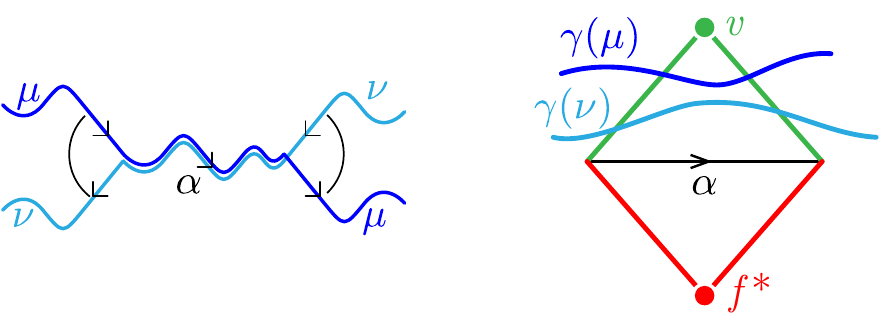}}
	\caption{The order~$\mu \prec_\alpha \nu$ on marked walks is the order in which the corresponding curves~$\curveof(\mu)$ and~$\curveof(\nu)$ cross the angles of~$\dissection$ and~$\dissection\dual$ corresponding to~$\alpha$.}
	\label{fig:orderCurves}
\end{figure}
\end{remark}

\begin{notation}
\label{notation:Falpha}
For any face~$F$ of~$\NKC$ and any arrow~$\alpha\in Q_1\blossom$, we denote by~$F_\alpha$ the set of walks of~$F$ marked at an occurence of~$\alpha^\pm$.
\end{notation}

\begin{lemma}
\label{lemm:countercurrentOrderIsTotal}
For any face~$F$ of~$\NKC$ and any arrow~$\alpha\in Q_1\blossom$, the countercurrent order~$\prec_\alpha$ defines a total order on~$F_{\alpha}$.
\end{lemma}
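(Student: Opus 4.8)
The plan is to verify that $\prec_\alpha$ is a strict total order on $F_\alpha$ by checking three things: that the comparison of Definition~\ref{def:orderForWalks} is well defined, that it is \emph{trichotomous} (for distinct $\mu_\star \ne \nu_\star$ exactly one of $\mu_\star \prec_\alpha \nu_\star$ or $\nu_\star \prec_\alpha \mu_\star$ holds), and that it is transitive.

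First I would establish trichotomy by a local analysis at a splitting endpoint of the maximal common substring $\sigma$. Orient everything by the marked arrow $\alpha$, and let $w$ be an endpoint of $\sigma$ at which $\mu_\star$ and $\nu_\star$ diverge; such an endpoint exists, since $\mu_\star \ne \nu_\star$ forces $\sigma$ to be a proper substring of at least one of them by maximality. Both walks reach $w$ along the same last letter $\eta^{\pm}$ of $\sigma$. The key point is that, in the complete quiver $\bar Q\blossom$, the locally gentle conditions of Definition~\ref{def:gentleQuiver} leave exactly two ways to continue a reduced string past $w$ after arriving along $\eta^{\pm}$: one continuation prolongs $\sigma$ without creating a corner, so it leaves $\sigma$ in the direction pointed by $\alpha$, while the other reverses and creates a corner at $w$, so it leaves $\sigma$ in the opposite direction. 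Indeed, condition~(iii) guarantees a unique composable forward continuation, and the bound of two arrows of each type at $w$ guarantees a unique reversing continuation. Since $\mu_\star$ and $\nu_\star$ take the two distinct continuations, exactly one of them is aligned with $\alpha$ and the other opposite, which is precisely trichotomy.

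When $\mu_\star$ and $\nu_\star$ split at \emph{both} endpoints of $\sigma$, the verdicts read at the two endpoints must agree for the comparison to be well defined; this is exactly the content of the remark following Definition~\ref{def:orderForWalks}, which uses that $\mu$ and $\nu$ are non-kissing. Thus $\prec_\alpha$ is a well-defined trichotomous relation. For transitivity, I would pass to the surface via Theorem~\ref{thm:complexesCoincide}, Proposition~\ref{prop:walks=arcs} and the interpretation recorded in Remark~\ref{rem:countercurrentOrderOnSurface}. The arrow $\alpha$ is an angle at a vertex $v$ between two consecutive edges of $\dissection$, and the walks of $F_\alpha$ correspond to $\dissection$-accordions $\curveof(\mu)$ that all cross these two edges near $v$, each cutting off a little corner region containing $v$. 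Since $F$ is a face, these accordions are pairwise non-crossing by Lemma~\ref{lem:nonKissing=nonCrossing}, so inside a small disk neighborhood of $v$ the corresponding (marked) arcs are pairwise disjoint and each separates $v$ from the rest of the neighborhood. Disjoint arcs separating a common point in a disk are nested, hence linearly ordered by inclusion of the cut-off regions; by Remark~\ref{rem:countercurrentOrderOnSurface} this nesting order is exactly $\prec_\alpha$. In particular $\prec_\alpha$ inherits transitivity (and re-proves trichotomy) from the linear nesting order.

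I expect the main obstacle to be transitivity if one insists on a purely combinatorial treatment, where one must control the relative positions of the splitting points of three walks simultaneously; the geometric nesting argument above circumvents this cleanly. Two points still require a little care. The case where $w$ is a blossom vertex, so that one of the two walks terminates there, must be folded into the same ``direction pointed by $\alpha$'' dichotomy. And infinite straight walks, for which all occurrences of $\alpha$ are identified, must be checked to fit the nesting picture: their accordions spiral into the corresponding puncture but still cross the angle at $v$ in a well-defined innermost (or outermost) position.
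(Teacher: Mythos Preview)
Your argument is correct. The paper itself does not give a proof here at all: it simply observes that the proof of \cite[Lem.~2.21]{PaluPilaudPlamondon} carries over verbatim to the locally gentle setting, and that proof is purely combinatorial (a direct case analysis controlling the splitting points of three walks, exactly the obstacle you anticipate). Your route is genuinely different for transitivity: you export the problem to the surface via Theorem~\ref{thm:complexesCoincide} and Remark~\ref{rem:countercurrentOrderOnSurface} and use the evident linear nesting of pairwise non-crossing arcs through a fixed angle. This is a clean shortcut that avoids the three-walk combinatorics, at the cost of invoking the surface machinery; since those results are already established in Section~4 and do not depend on the present lemma, there is no circularity.

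Two small comments on the loose ends you flag. The blossom-endpoint worry is a non-issue: a blossom vertex has degree one in~$\bar Q\blossom$, so if both walks reach it they must both terminate there and cannot split; splitting endpoints of~$\sigma$ are therefore always internal (degree four) vertices, and your two-continuation dichotomy applies without modification. For the infinite straight walk~${}^\infty c^\infty$, note that by the convention in Definition~\ref{def:accordion} its associated accordion is the puncture~$v$ itself rather than a spiraling curve, so in the nesting picture it sits at the innermost position; combinatorially this matches the fact that any other walk leaving the cycle must reverse direction, hence~${}^\infty c^\infty$ is the $\prec_\alpha$-minimum (consistent with Proposition~\ref{prop:numberOfDistinguishedArrows}).
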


\begin{proof}
The proof of \cite[Lem.~2.21]{PaluPilaudPlamondon} applies here.
\end{proof}

\begin{lemma}
\label{lemm:facesHaveMaximalElement}
For any finite non-kissing face~$F$ of~$\NKC$ and any arrow~$\alpha\in Q_1\blossom$, the set~$F_\alpha$ is either empty or admits a maximal element for the countercurrent order~$\prec_\alpha$.
\end{lemma}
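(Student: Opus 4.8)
The plan is to leverage the fact that $\prec_\alpha$ is already a total order on $F_\alpha$ (Lemma~\ref{lemm:countercurrentOrderIsTotal}), so that the only possible obstruction to the existence of a maximal element is that $F_\alpha$ be infinite. First I would observe that $F_\alpha$ is finite unless some walk of $F$ runs through $\alpha$ infinitely often: indeed $F$ is a finite collection of walks, a finite walk contains finitely many occurrences of $\alpha^\pm$, and for an eventually cyclic walk every occurrence of $\alpha^\pm$ lying outside its cyclic tails is confined to the finite bending part. When $F_\alpha$ is finite the statement is immediate, a total order on a finite set always having a maximum.

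The heart of the matter is therefore the contribution to $F_\alpha$ of a single eventually cyclic walk $\omega \in F$ whose cyclic tail $c^\infty$ (or ${}^\infty c$) runs through $\alpha$. I would analyse it by passing to the surface via Theorem~\ref{thm:complexesCoincide} and the interpretation of $\prec_\alpha$ recorded in Remark~\ref{rem:countercurrentOrderOnSurface}. The key geometric fact is that $\alpha$ sits inside the straight cyclic walk $c$, which is a maximal non-relation path; by the gluing rule of Definition~\ref{def:surfaceQuiver}, consecutive arrows with product outside $I$ share the same marked point, so all arrows of $c$ carry a single marked point $v(\alpha)$, namely the puncture around which the tail $c^\infty$ spirals (Remark~\ref{rem:propertiesSurface}). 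The accordion $\curveof(\omega)$ spirals into this puncture $v(\alpha)$, and each occurrence of $\alpha^\pm$ in the tail corresponds to one pass of $\curveof(\omega)$ in front of the angle at $v(\alpha)$; since the successive loops are nested and converge to the puncture, these passes get strictly closer to $v(\alpha)$ as one descends into the spiral. By Remark~\ref{rem:countercurrentOrderOnSurface}, being closer to $v(\alpha)$ means being strictly smaller for $\prec_\alpha$, so the tail occurrences form a strictly $\prec_\alpha$-decreasing sequence whose maximum is the shallowest one, namely the occurrence at which $\omega$ first enters its cyclic tail.

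Granting this, each walk $\omega \in F$ meeting $\alpha$ contributes to $F_\alpha$ a set of markings $S_\omega$ admitting a $\prec_\alpha$-maximal element $m(\omega)$: at most two spiral-entry occurrences (one per cyclic end of $\omega$ that passes through $\alpha$), together with the finitely many occurrences in the bending part. Since $F$ is finite and $\prec_\alpha$ is total, the finite set $\{\, m(\omega) \mid \omega \in F,\ \omega \text{ meets } \alpha \,\}$ has a greatest element $m(\omega_0)$, and I would conclude by the direct check that $m(\omega_0)$ is the maximum of all of $F_\alpha$: any marked walk in $F_\alpha$ belongs to some $S_\omega$, hence is $\preceq_\alpha m(\omega) \preceq_\alpha m(\omega_0)$.

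The step I expect to be the main obstacle is the monotonicity claim of the second paragraph, that deeper occurrences of $\alpha^\pm$ along the spiral are strictly smaller for $\prec_\alpha$. Although the surface picture makes it visually compelling, a careful combinatorial argument must compare two markings $\omega_{\star_1}, \omega_{\star_2}$ of the same eventually cyclic walk: one forms their maximal common substring $\sigma$ (aligned at the marked $\alpha$), notes that $\sigma$ is infinite on the cyclic side and finite on the side of the bending part, and then reads off at the finite endpoint of $\sigma$ which marking exits in the direction pointed by $\alpha$. I would either carry out this splitting analysis directly from Definition~\ref{def:orderForWalks}, or, more cleanly, establish once and for all that spiraling inward corresponds to decreasing $\prec_\alpha$ and invoke it; the periodicity of the tail guarantees that this comparison is uniform across all pairs of tail occurrences, which is precisely what rules out an infinite $\prec_\alpha$-increasing sequence and thereby secures a genuine maximum.
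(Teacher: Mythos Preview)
Your proposal is correct and follows essentially the same strategy as the paper's own argument. The paper in fact gives two proofs: a geometric one that is almost verbatim your second and third paragraphs (each curve spirals into the puncture~$v(\alpha)$, so one pass is furthest from~$v$; then take the maximum over the finitely many walks of~$F$), and a short combinatorial one that asserts directly the monotonicity claim you single out as the main obstacle (``$\omega$ marked at the first occurrence of~$\alpha$ in~$c^{\pm\infty}$ is greater than~$\omega$ marked at any other occurrence''), without further justification. Your write-up is slightly more careful than the paper's in that you explicitly flag this monotonicity as the nontrivial step and sketch how to verify it via Definition~\ref{def:orderForWalks}.
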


\begin{proof}
We provide two proofs of the lemma.  One is combinatorial and applies to~$\NKC$, and the other is geometric and applies to~$\NCC[\dissection_{\bar Q}, {\dissection\dual\!\!_{\bar Q}}]$, using the isomorphism of Theorem \ref{thm:complexesCoincide} and Remark \ref{rem:countercurrentOrderOnSurface}.

\para{Combinatorial proof}
Note that even though~$F$ is finite, the set of marked walks~$F_\alpha$ might be infinite if~$\alpha$ belongs to an oriented cycle of~$Q$.
Assume that~$F_\alpha$ is non-empty, and infinite.
Any walk that contains an infinite number of copies of~$\alpha$ can be written in the form~$\omega=\sigma c^{\pm\infty}$ where~$\sigma$ is a (possibly infinite) substring and~$c$ is an oriented cycle containing~$\alpha$ such that~$c, c^2\notin I$.
Then~$\omega$ marked at the first occurrence of~$\alpha$ in~$c^{\pm\infty}$ is greater than~$\omega$ marked at any other occurrence of~$\alpha$ in~$c^{\pm\infty}$.
We can thus safely remove all copies of~$\omega$ marked at an occurrence of~$\alpha$ in~$c^{\pm\infty}$ but the first one.
After applying this procedure a finite number of times,~$F_\alpha$ becomes finite, and the claim follows.

\para{Geometric proof}
View~$F$ as a face of the non-crossing complex~$\NCC[\dissection_{\bar Q}, {\dissection\dual\!\!_{\bar Q}}]$.
The arrow~$\alpha$ then corresponds to an angle in~$\dissection$ at a vertex~$v$.  
For any curve~$\gamma$ in~$F$, even though~$\gamma$ may pass through the above angle infinitely many times, there is one instance that is furthest from the angle: indeed, if ~$\gamma$ passes infinitely many times through~$\alpha$, then~$v$ is a puncture and~$\gamma$ circles infinitely towards it.
But then~$\gamma$ passes through~$\alpha$ only finitely many times before it starts circling~$v$ (in either direction), and one of these times is the furthest from~$v$.
Therefore, for every curve~$\gamma\in F$, there is a maximal occurence of~$\alpha$ in~$\gamma$.
Since~$F$ is finite, the set of these maximal occurences is finite, so it admits a maximal element.  
This element is the maximal element of~$F_\alpha$.
\end{proof}

\subsection{The non-crossing and non-kissing complexes have no infinite faces}

We now pause to prove the following statement.

\begin{proposition}
\label{prop:facetsAreFinite}
Any face of~$\NKC$ and~$\NCC[\dissection_{\bar Q}, {\dissection\dual\!\!_{\bar Q}}]$ is finite, thus contained in a finite maximal face.
\end{proposition}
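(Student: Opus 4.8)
The plan is to prove finiteness of non-kissing faces directly, working with the combinatorial description via walks and the countercurrent order $\prec_\alpha$ established in Lemmas~\ref{lemm:countercurrentOrderIsTotal} and~\ref{lemm:facesHaveMaximalElement}; the statement for $\NCC[\dissection_{\bar Q}, {\dissection\dual\!\!_{\bar Q}}]$ then follows automatically via the isomorphism of Theorem~\ref{thm:complexesCoincide}. Suppose, for contradiction, that $F$ is an infinite face of $\NKC$, i.e.\ an infinite collection of pairwise non-kissing walks. My first step is to extract structural information by a counting/pigeonhole argument on the blossoming quiver $\bar Q\blossom$: since $\bar Q$ is a finite quiver, $Q_1\blossom$ is a finite set of arrows (Remark~\ref{rem:sizeBlossomingQuiver}), so an infinite family of distinct walks must either contain arbitrarily long walks or repeat some finite local pattern infinitely often.

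The key idea I would exploit is that any two distinct non-kissing walks sharing a marked occurrence of an arrow are comparable under $\prec_\alpha$, and that non-kissing is a strong rigidity constraint. First I would fix some arrow $\alpha\in Q_1\blossom$ that appears in infinitely many walks of $F$ (such an arrow exists by pigeonhole, since each walk uses at least one arrow and there are finitely many arrows). Then $F_\alpha$ is infinite. By Lemma~\ref{lemm:countercurrentOrderIsTotal}, $\prec_\alpha$ totally orders $F_\alpha$. The goal is to show this total order cannot have infinite ascending (or descending) chains of \emph{pairwise non-kissing} walks. The geometric picture of Remark~\ref{rem:countercurrentOrderOnSurface} is the guiding intuition: the walks in $F_\alpha$ correspond to non-crossing curves all passing in front of a common angle at a vertex $v$, ordered by their distance from $v$. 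An infinite strictly increasing chain of such curves, all pairwise non-crossing, would force infinitely many parallel curve-segments nested around a single angle, which is only possible if $v$ is a puncture and the curves spiral around it — but then I can compare how they behave away from $v$ to derive a contradiction with pairwise non-crossing.

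Concretely, I would argue that if $F_\alpha$ is infinite, the walks in it must share longer and longer common substrings through $\alpha$ (since at each step $\prec_\alpha$-comparability forces agreement on a common substring $\sigma$), and two walks that agree on an ever-growing substring but remain distinct and non-kissing must eventually split in a controlled way. The crux is to show that only finitely many \emph{pairwise} non-kissing walks can simultaneously pass through a fixed angle: whenever three walks pass through $\alpha$, the middle one (in $\prec_\alpha$) constrains the outer two, and iterating this forces a finite bound. I expect the main obstacle to be the infinite-cyclic case, where a single oriented cycle $c$ with $c, c^2\notin I$ produces walks of the form $\sigma c^{\pm\infty}$ spiralling into a puncture; here the naive length/counting argument fails because walks are genuinely infinite. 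This is precisely the subtlety flagged in the proof of Lemma~\ref{lemm:facesHaveMaximalElement}, and I would handle it by the same device used there — collapsing the infinitely many marked occurrences of $\alpha$ within a single spiralling walk to a single representative (the one furthest from $v$), after which the remaining distinct walks through $\alpha$ are finitely many. Combining finiteness of $F_\alpha$ over the finitely many arrows $\alpha$ that can appear in any walk of $F$ then bounds $|F|$, giving the contradiction and completing the proof.
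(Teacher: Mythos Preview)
Your proposal has a genuine gap at the central step. You assert that only finitely many pairwise non-kissing walks can pass through a fixed arrow~$\alpha$, but the justification offered does not go through. The claim that the walks in an infinite $\prec_\alpha$-chain ``must share longer and longer common substrings through~$\alpha$'' is not correct: for $\omega_1 \prec_\alpha \omega_2 \prec_\alpha \omega_3$ the maximal common substrings $\sigma_{12}$ and $\sigma_{23}$ both contain~$\alpha$, but one may extend further to the left and the other further to the right, so there is no monotone growth to exploit. Likewise, ``the middle one constrains the outer two, and iterating this forces a finite bound'' is an intuition, not an argument; nothing you wrote produces an explicit bound. Finally, the collapsing device you borrow from Lemma~\ref{lemm:facesHaveMaximalElement} only reduces the infinitely many \emph{marked occurrences of a single spiralling walk} to one representative; it does not help when infinitely many \emph{distinct} walks pass through~$\alpha$. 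Note also that Lemma~\ref{lemm:facesHaveMaximalElement} is stated for \emph{finite} faces, so you cannot invoke it directly once you have assumed~$F$ is infinite.

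The paper avoids all of this by a short reduction you are missing. It first proves a bound for \emph{finite} faces: in any finite face~$F$, every bending walk~$\omega$ admits an arrow~$\beta$ at which $\omega$ is $\prec_\beta$-maximal in~$F$ (Lemma~\ref{lem:ExistenceOfDistinguishedArrows}, proved by sliding along~$\omega$ and using Lemma~\ref{lem:nkCycles} to guarantee termination). Since distinct walks cannot both be maximal at the same arrow, this injects the bending walks of~$F$ into~$Q_1\blossom$, giving a uniform bound~$|F| \le |Q_1\blossom| + p$ (Lemma~\ref{lemm:facesAreBounded}). The proposition then follows in one line: an infinite face would have finite subfaces of arbitrarily large cardinality, contradicting this bound. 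The key concept you are missing is this \emph{distinguished arrow} map, together with the observation that it suffices to bound finite faces.
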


Even though Theorem \ref{thm:complexesCoincide} ensures that~$\NKC$ and~$\NCC[\dissection_{\bar Q}, {\dissection\dual\!\!_{\bar Q}}]$ coincide, we provide two proofs.
One is combinatorial and applies to~$\NKC$, and the other is geometric and applies to~$\NCC[\dissection_{\bar Q}, {\dissection\dual\!\!_{\bar Q}}]$.
The proof requires the following intermediate results.

\begin{lemma}
\label{lem:nkCycles}
Let~$\omega$ be a non-kissing walk that contains a non-trivial oriented cycle~$c$ with~$c, c^2\notin I$.
Then~$\omega$ is of the form~$\sigma c^{\pm\infty}$, for some (possibly infinite) substring~$\sigma$.
\end{lemma}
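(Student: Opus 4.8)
The plan is to argue by contradiction: assume $\omega$ does not kiss itself, contains the oriented cycle $c$ (with $c, c^2\notin I$), yet is \emph{not} of the form $\sigma c^{\pm\infty}$, and derive from this a self-kiss of $\omega$, contradicting its presence in $\NKC$.

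First I would analyze the maximal straight segment $S$ of $\omega$ containing the given occurrence of $c$. The key point is that $S$ is forced to run along the cycle in both directions: since $c, c^2\notin I$, every length-two factor of the bi-infinite periodic word $\cdots c\,c\,c\cdots$ avoids $I$, so by the gentleness condition~(iii) of Definition~\ref{def:gentleQuiver} every vertex of the cycle has a \emph{unique} straight (non-relation, non-reversing) continuation, namely the next cycle arrow. Hence, starting from the occurrence of $c$, extending $\omega$ straight either follows $c$ forever or stops at a corner, independently on each side. Following $c$ forever on one side would already give $\omega=\sigma c^{\pm\infty}$; so under our assumption $S$ is finite and bounded by a corner at each end. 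Reading $S$ in the direction of $c$, a short sign computation with Definition~\ref{def:topBottom} shows that the initial corner is a peak $w'$ and the terminal corner is a deep $w$. Indexing the vertices of $S$ by an interval $[a,b]\subset\Z$ of the periodic word $\cdots c\,c\,c\cdots$, the fact that $S$ contains a full copy of $c$ gives $b-a\ge k$, where $k$ is the length of $c$.

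The heart of the argument is then to read off the self-kiss. Because all arrows of $S$ point forward, the top substrings of $\omega$ lying inside $S$ are exactly those starting at the peak $w'$, and the bottom substrings lying inside $S$ are exactly those ending at the deep $w$. Using the $k$-periodicity of the arrow labels along $S$, the forward substring occupying $[a,b-k]$ (a top substring, as it begins at $w'$) and the one occupying $[a+k,b]$ (a bottom substring, as it ends at $w$) carry the \emph{same} sequence of arrows. When $b-a>k$ this common substring has positive length and exhibits two parallel strands of $\omega$ running the same way, one on top and one on the bottom; when $b-a=k$ it degenerates to the single vertex $w'=w$ (equal in the cycle), which is then simultaneously a peak and a deep of $\omega$. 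In either case this is precisely a self-kiss in the sense of Definition~\ref{def:kiss}, whose statement explicitly allows the common substring to be a single vertex that is a peak of one strand and a deep of the other. This contradicts $\omega\in\NKC$, so $S$ must be infinite on at least one side and $\omega=\sigma c^{\pm\infty}$.

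The main obstacle I anticipate is the careful bookkeeping around the self-kiss: correctly pinning down, via the sign conventions of Definition~\ref{def:topBottom}, that the two ends of $S$ are a peak and a deep, and then matching a top substring with a bottom substring of equal arrow content through the periodicity, without overlooking the degenerate single-vertex kiss that arises when $S$ wraps exactly once. A minor point to dispatch is that $c$ need not be primitive, but this causes no trouble: one simply uses $|c|$ as a period throughout, or reduces to the underlying primitive cycle, noting that $c^2\notin I$ forces the square of that primitive cycle out of $I$ as well. Alternatively, the statement admits a geometric proof on $\surface_{\bar Q}$ through Theorem~\ref{thm:complexesCoincide}: an occurrence of $c$ that turns back corresponds to a $\dissection_{\bar Q}$-accordion that winds partway around the puncture attached to the cycle and returns, forcing a self-crossing. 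I would keep the combinatorial argument as the primary one and only sketch this geometric translation.
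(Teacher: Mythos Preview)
Your argument is correct and essentially coincides with the paper's combinatorial proof. The paper writes the finite straight segment as~$\beta^{-1} c^{\ell} \alpha_1\cdots\alpha_j \gamma^{-1}$ and directly exhibits the kissing pair~$\beta^{-1}\alpha_1\cdots\alpha_j\alpha_{j+1}$ (top) and~$\alpha_r\alpha_1\cdots\alpha_j\gamma^{-1}$ (bottom), which is exactly your periodicity shift by~$k$ rephrased; the paper also gives the geometric version you sketch at the end.
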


\begin{proof}[Combinatorial proof]
Let~$c=\alpha_1\cdots\alpha_r$ be a non-trivial oriented cycle in some walk~$\omega$, with~$c, c^2\notin I$.
Assume that~$\omega$ is not of the form~$\sigma c^{\pm\infty}$.
Then, up to cyclic reordering of the arrows in~$c$, it contains a substring of the form~$\beta^\pm c^l \alpha_1\cdots\alpha_j\gamma^\pm$, for some arrows~$\beta,\gamma$ not in~$c$, some~$l\geq 1$, and some~$j<r$.
Since~$\bar Q$ is locally gentle, this substring is~$\beta^{-1} c^l \alpha_1\cdots\alpha_j\gamma^{-1}$.
Hence, the walk~$\omega$ contains the two substrings~$\beta^{-1}\alpha_1\cdots\alpha_j\alpha_{j+1}$ and~$\alpha_r\alpha_1\cdots\alpha_j\gamma^{-1}$, and thus self-kisses.

\para{Geometric proof}
An oriented cycle~$c$ corresponds to a puncture in~$\dissection_{\bar Q}$.
Let~$\gamma = \curveof(\omega)$.  The fact that~$\omega$ contains~$c$ translates to the fact that~$\gamma$ makes one full circle around the correponding puncture.

\begin{figure}[h]
	\capstart
	\centerline{\includegraphics[scale=.7]{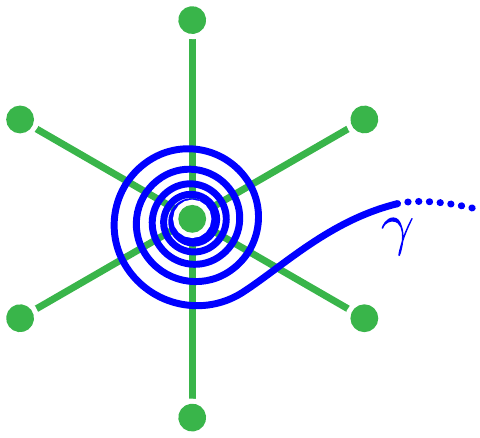}}
	\caption{The curve~$\gamma$ makes at least one full circle around the puncture.}
	\label{fig:aroundPuncture1}
\end{figure}

Once this cycle is completed, since~$\gamma$ cannot cross itself, it has to continue cycling infinitely around the puncture.  This proves the claim.
\end{proof}

\begin{lemma}\label{lem:ExistenceOfDistinguishedArrows}
Let~$F$ be a finite non-kissing face of~$\NKC$, let~$\omega$ be a walk in~$F$, and let~$\alpha$ be an arrow on~$\omega$.
Assume that~$\omega$ is not an infinite straight walk.
Then, there is an arrow~$\beta$ oriented in the same direction as~$\alpha$ in~$\omega$ such that~$\omega$ (marked at~$\beta$) is maximal in~$F$ for~$\prec_\beta$.
\end{lemma}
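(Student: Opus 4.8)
The plan is to reduce the statement, via Remark~\ref{rem:countercurrentOrderOnSurface}, to a controlled ``extremal'' problem and then run a pushing argument along the walk. Marking $\omega$ at an arrow $\beta$ amounts to fixing an angle of $\dissection$ at a vertex $v_\beta$ through which the accordion $\curveof(\omega)$ passes, and by Remark~\ref{rem:countercurrentOrderOnSurface} the walk $\omega$ is $\prec_\beta$-maximal in $F_\beta$ exactly when $\curveof(\omega)$ is the curve of $F$ passing \emph{furthest} from $v_\beta$ at that angle. The arrows oriented as $\alpha$ in $\omega$ are precisely those of the maximal monotone run $R$ through $\alpha$ (all arrows of $R$ share the sign of $\alpha$, since the sign can only change at a corner in the sense of Definition~\ref{def:straightBending}); geometrically $R$ is a maximal ``fan'' of $\curveof(\omega)$, a stretch whose turning vertices all lie on one fixed side of the curve. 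Thus I only need to produce one vertex of this fan past which no curve of $F$ reaches further.

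The core of the argument is then an iteration inside $R$. For $\beta \in R$ let $\mu_\beta$ be the $\prec_\beta$-maximal element of $F_\beta$, which exists by Lemma~\ref{lemm:facesHaveMaximalElement} and is well-defined since $\prec_\beta$ is a total order on $F_\beta$ by Lemma~\ref{lemm:countercurrentOrderIsTotal}. If $\mu_\beta=\omega$ for some $\beta$ we are done; otherwise $\omega\prec_\beta\mu_\beta$, and writing $\sigma$ for their maximal common substring through $\beta$ (Definition~\ref{def:orderForWalks}), the two walks split at an endpoint of $\sigma$. Here the non-kissing hypothesis does the essential work: the only local configuration compatible with $\omega\prec_\beta\mu_\beta$ and with $\omega,\mu_\beta$ not kissing is that $\mu_\beta$ makes a corner at the splitting vertex while $\omega$ proceeds monotonically, so that $\mu_\beta$ leaves the run $R$ there. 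I would push $\beta$ along $R$ toward whichever end of $R$ the splitting vertex lies, using the turning vertex of $\mu_\beta$ as a monovariant: because the curves of $F$ are pairwise non-crossing, the competing maximal curves are nested, and the turning vertex of $\mu_\beta$ advances monotonically toward that end of $R$ as $\beta$ sweeps it. Since $\omega$ is not an infinite straight walk, each end of $R$ is finite, terminating at a corner (a peak or a deep) or at a blossom; the strictly advancing turning vertices cannot persist, which forces the iteration to halt at some $\beta$ with $\mu_\beta=\omega$, and this $\beta$ is the desired arrow.

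The hard part is the case where $R$ is \emph{infinite}, i.e. $\curveof(\omega)$ spirals into a puncture. Then Lemma~\ref{lem:nkCycles} applies and writes $\omega=\sigma c^{\pm\infty}$, so $\alpha$ occurs infinitely often inside the oriented cycle $c^{\pm\infty}$ and the naive pushing never reaches an end of $R$. The remedy is to exploit finiteness of $F$ exactly as in the geometric proof of Lemma~\ref{lemm:facesHaveMaximalElement}: every curve of $F$ crossing the angle $v_\alpha$ infinitely often has a well-defined furthest crossing there, so $F_\alpha$ is, for the purpose of $\prec_\alpha$, effectively finite and still totally ordered. I would then single out the first occurrence of $\alpha$ along the cycle of $\omega$ (the one nearest $\sigma$, corresponding to the crossing furthest from the puncture) and argue that $\omega$ marked there is maximal, the point being that a competitor passing strictly further out would have to cross $\curveof(\omega)$ or force $\omega$ to self-kiss, both of which are excluded. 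Reconciling this spiral case with the finite case, and in particular verifying that a \emph{single} choice of $\beta$ in $R$ simultaneously defeats competitors splitting at either end, is the delicate technical point I expect to consume most of the write-up; as in the previous lemma, it may be cleanest to record both a combinatorial version (on $\NKC$) and a geometric version (on the accordion side via Theorem~\ref{thm:complexesCoincide}).
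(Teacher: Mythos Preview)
Your identification of ``arrows oriented as $\alpha$ in $\omega$'' with the maximal monotone run $R$ through $\alpha$ is incorrect, and this breaks the argument. An arrow $\beta$ is oriented as $\alpha$ when it carries the same sign in $\omega = \prod \alpha_\ell^{\varepsilon_\ell}$; this can and does happen in several monotone runs, not only the one containing $\alpha$. Worse, restricting the search to $R$ can fail outright. Take $\omega = \cdots \gamma_0 \gamma_1 \gamma_2^{-1} \gamma_3 \cdots$ with $\alpha = \gamma_1$, so that $R$ ends at the peak $t(\gamma_1)$, and let $\mu_0 = \cdots \gamma_0 \gamma_1 \gamma_2^{-1} \gamma_4^{-1} \cdots$ agree with $\omega$ on the whole prefix $\cdots \gamma_0 \gamma_1 \gamma_2^{-1}$ and split only at $s(\gamma_3)$. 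Then $\{\omega,\mu_0\}$ is non-kissing, and the splitting arrow of $\omega$ is $\gamma_3$ with $\varepsilon_{\gamma_3} = +1 = \varepsilon_\alpha$, so $\omega \prec_\beta \mu_0$ for \emph{every} $\beta \in R$. Thus $\omega$ is nowhere maximal on $R$; the arrow at which it becomes maximal is $\gamma_3$, which has the sign of $\alpha$ but sits in the \emph{next} monotone run. Your claim that ``$\omega$ proceeds monotonically'' at the splitting vertex is exactly what fails here: $\omega$ has a deep at $s(\gamma_3)$, placing the needed $\beta$ outside your fan.

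The paper's iteration is not confined to $R$. From $\alpha$ one jumps directly to the arrow $\beta$ of $\omega$ just beyond the maximal common substring $\sigma_0$ with $\mu_0$; this $\beta$ automatically has the sign of $\alpha$ (by the definition of $\omega \prec_\alpha \mu_0$) but may lie past several corners of $\omega$. The monovariant is the chain of common substrings $\sigma_0 \subsetneq \sigma_1 \subsetneq \cdots$, and the key step is that $\mu_{j+1}$ must contain $\sigma_j$ because $\mu_{j+1}$ is non-kissing with $\mu_j$ (not merely with $\omega$). Termination then comes from Lemma~\ref{lem:nkCycles}: an unbounded strictly increasing chain of substrings forces $\omega$ to be an infinite straight walk. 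In particular the spiral case needs no separate treatment.
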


\begin{proof}
Let~$\omega$ be a walk in some non-kissing facet~$F$, marked at an arrow~$\alpha$.
If~$\omega$ is not maximal at~$\alpha$, there is a walk~$\mu_0\in F$ such that~$\omega\prec_\alpha \mu_0$.
Let~$\sigma_0$ be the maximal common substring of~$\omega$ and~$\mu_0$ containing~$\alpha$.
Since~$\omega$ and $\mu_0$ are different, they split at some endpoint of~$\sigma_0$.
At this endpoint, the arrow~$\beta$ of~$\omega$ not in~$\sigma_0$ points in the same direction as~$\alpha$ since~$\omega \prec_\alpha \mu_0$.
If~$\omega$ is maximal at~$\beta$ in~$F$, we are done.
Otherwise, let~$\mu_1$ be maximal in~$F$ for~$\prec_\beta$.
Since~$\mu_1$ does not kiss with~$\mu_0$, and~$\omega\prec_\beta\mu_1$,~$\mu_1$ contains~$\sigma_0$.
Let~$\sigma_1$ be the maximal common substring of~$\omega$ and~$\mu_1$ containing~$\beta$.
As remarked above,~$\sigma_1$ contains~$\sigma_0$.
Since~$\omega$ is not maximal at~$\beta$, the walks~$\omega$ and~$\mu_1$ split at some endpoint of~$\sigma_1$.
By Lemma~\ref{lem:nkCycles}, iterating this procedure eventually stops ($\mu_j=\omega$ for some $j$) unless~$\omega$ is an infinite straight walk.
\end{proof}

\begin{lemma}
\label{lemm:facesAreBounded}
There is a bound on the number of elements of a finite face of~$\NKC$.
\end{lemma}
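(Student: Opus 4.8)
The plan is to assign to each walk of $F$ a distinguished arrow of $\bar Q\blossom$ at which it is maximal, and then count. First I would separate the walks of $F$ into the infinite straight walks and all the others. The infinite straight walks are exactly the walks of the form $^\infty c^\infty$ for $c$ a primitive oriented cycle of $Q$ with $c, c^2 \notin I$; since $Q$ is finite there are only finitely many such cycles, hence only finitely many infinite straight walks, and this count depends only on $\bar Q$ and not on $F$.

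For the remaining walks, the key tool is Lemma~\ref{lem:ExistenceOfDistinguishedArrows}. Fix a walk $\omega \in F$ that is not an infinite straight walk, pick any arrow $\alpha$ occurring on $\omega$, and apply the lemma (using that $F$ is finite): it produces an arrow $\beta = \beta_\omega \in Q_1\blossom$ such that $\omega$, marked at $\beta$, is the maximal element of $F_\beta$ for the countercurrent order $\prec_\beta$. This defines a map $\omega \mapsto \beta_\omega$ from the non-infinite-straight walks of $F$ to the finitely many arrows of the blossoming quiver.

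Next I would check this map is injective. By Lemma~\ref{lemm:countercurrentOrderIsTotal} the relation $\prec_\beta$ is a total order on $F_\beta$, and by Lemma~\ref{lemm:facesHaveMaximalElement} (here $F$ finite) $F_\beta$ admits a maximal element, which is therefore unique. Hence for each arrow $\beta$ at most one walk of $F$ can be the $\prec_\beta$-maximum of $F_\beta$; if $\omega$ and $\omega'$ both mapped to $\beta$ they would both be this unique maximum, forcing $\omega = \omega'$. Consequently the number of non-infinite-straight walks of $F$ is at most $|Q_1\blossom| = 4|Q_0| - |Q_1|$.

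Putting the two counts together bounds $|F|$ by $|Q_1\blossom|$ plus the number of primitive cycles, a quantity depending only on $\bar Q$ and not on $F$. The main obstacle is not the counting but making sure that the maximality furnished by Lemma~\ref{lem:ExistenceOfDistinguishedArrows} is genuinely \emph{global} (the maximum over all of $F_\beta$, not merely a local one) and that walks traversing an arrow infinitely often --- the cyclic case governed by Lemma~\ref{lem:nkCycles} --- do not spoil uniqueness of the maximum. Both points are exactly what the earlier lemmas were set up to guarantee, so the argument reduces to assembling them correctly.
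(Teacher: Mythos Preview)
Your argument is correct and is precisely the paper's combinatorial proof, just written out in more detail: the paper simply states that by Lemma~\ref{lem:ExistenceOfDistinguishedArrows} the number of walks in a finite face is bounded by~$|Q_1\blossom|+p$, where~$p$ is the number of infinite straight walks, and you have unpacked the injectivity reasoning behind this. (The paper also offers an alternative geometric proof via tagged triangulations of~$\surface_{\bar Q}$, which you did not use.)
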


\begin{proof}[Combinatorial proof]
By Lemma~\ref{lem:ExistenceOfDistinguishedArrows}, the number of walks in a finite face of~$\NKC$ is bounded by~$|Q_1\blossom|+p$, where~$p$ is the number of infinite straight walks (\ie the number of primitive oriented cycles in~$Q$).

\para{Geometric proof}
Let~$F$ be a face of the non-crossing complex~$\NCC[\dissection_{\bar Q}, {\dissection\dual\!\!_{\bar Q}}]$.
Its elements are pairwise compatible~$\dissection$-accordions on the surface~$\surface_{\bar Q}$.
Thus~$F$ can be viewed as a partial tagged triangulation of~$\surface_{\bar Q}$, by letting the~$\dissection$-accordions circling a puncture in (say) clockwise order be seen as ``notched'' arcs to said puncture, and the ones circling it in counterclockwise orientation be ``un-notched'' arcs.
It is known (see, for instance, \cite{FominShapiroThurston}) that any such partial tagged triangulation can be embedded into a full triangulation, and that triangulations contain a finite number of arcs.
Thus~$F$ is finite, and is contained in a finite maximal face.
\end{proof}

\begin{proof}[Proof of Proposition~\ref{prop:facetsAreFinite}]
If~$\NKC$, respectively~$\NCC$, admits an infinite face, then all subsets of this face are also faces, so~$\NKC$, respectively~$\NCC$, admits faces of arbitrary finite cardinality.
This contradicts Lemma \ref{lemm:facesAreBounded}.
\end{proof}

\subsection{Purity of the non-crossing and non-kissing complexes}

A simplicial complex is \defn{pure of dimension~$d$} if all its maximal faces have dimension~$d$.
The aim of this section is to prove the following purity result.

\begin{proposition}
\label{prop:purity}
The reduced non-kissing complex~$\RNKC$ and the reduced non-crossing complex~$\RNCC[\dissection_{\bar Q}, {\dissection\dual\!\!_{\bar Q}}]$ are pure of dimension~$|Q_0|$.
The non-kissing complex~$\NKC$ and the non-crossing complex~$\NCC[\dissection_{\bar Q}, {\dissection\dual\!\!_{\bar Q}}]$ are pure of dimension~$3|Q_0| - |Q_1| + p$ where~$p$ is the number of primitive oriented cycles in~$\bar Q$ or equivalently the number of punctures in~$V_{\bar Q}$.
\end{proposition}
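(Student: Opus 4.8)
The plan is to reduce the full statement to a purity statement for the \emph{reduced} complex, and then to pin down the common facet size by a single explicit facet. As recorded in Definition~\ref{def:nKc}, no straight walk can kiss any walk, so every straight walk belongs to every facet; dually, every external accordion lies in every facet. Hence each facet of $\NKC$ is the disjoint union of a facet of $\RNKC$ with the fixed set of all straight walks, i.e.\ $\NKC$ is the simplicial join of $\RNKC$ with the full simplex on the straight walks, and likewise on the non-crossing side. By Remark~\ref{rem:propertiesSurface} there are exactly $2|Q_0|-|Q_1|+p$ straight walks ($2|Q_0|-|Q_1|$ finite ones joining blossom vertices, together with the $p$ infinite cyclic ones). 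Consequently, once every facet of $\RNKC$ is shown to consist of exactly $|Q_0|$ bending walks, every facet of $\NKC$ will consist of exactly $|Q_0|+(2|Q_0|-|Q_1|+p)=3|Q_0|-|Q_1|+p$ walks. It therefore suffices to prove that $\RNKC$ is pure of dimension $|Q_0|$, and by Theorem~\ref{thm:complexesCoincide} the statements for $\RNCC[\dissection_{\bar Q}, {\dissection\dual\!\!_{\bar Q}}]$ and $\NCC[\dissection_{\bar Q}, {\dissection\dual\!\!_{\bar Q}}]$ then follow automatically.

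The heart of the argument is to show that all facets of $\RNKC$ have the same cardinality, for which I would run an exchange (flip) argument organised by the countercurrent order. By Proposition~\ref{prop:facetsAreFinite} every facet $F$ is finite, and by Lemmas~\ref{lemm:countercurrentOrderIsTotal} and~\ref{lemm:facesHaveMaximalElement} each slice $F_\alpha$ is totally ordered by $\prec_\alpha$ and admits a maximal element. The decisive input is Lemma~\ref{lem:ExistenceOfDistinguishedArrows}: every bending walk $\omega\in F$ is $\prec_\beta$-maximal at some distinguished arrow $\beta$. Building on this, I would show that any two facets are linked by a sequence of elementary exchanges, each removing one bending walk $\omega$ and inserting exactly one other bending walk $\omega'$ so that the result is again a facet: one locates a distinguished arrow of $\omega$, reverses the side along which $\omega$ leaves its maximal common substring with the competing walk, and reconnects along the opposite angle, checking that $\omega'$ is non-kissing with all of $F\ssm\{\omega\}$. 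Since each exchange preserves the number of walks, it remains only to prove connectivity of the flip graph; I would obtain this by exhibiting, from any facet distinct from the peak facet, an exchange strictly decreasing a suitable statistic measuring $\prec$-distance to the peak facet, so that the procedure terminates at $\set{a_\peak}{a\in Q_0}$.

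Equal cardinality being established, the value is read off the peak facet of Example~\ref{exm:peakDeepFacets}: it is a facet of $\RNKC$ and manifestly has exactly $|Q_0|$ elements, one bending walk $a_\peak$ per vertex $a\in Q_0$. Hence the common facet cardinality is $|Q_0|$, so $\RNKC$ is pure of dimension $|Q_0|$; adjoining the $2|Q_0|-|Q_1|+p$ straight walks yields the stated dimension $3|Q_0|-|Q_1|+p$ for $\NKC$, and Theorem~\ref{thm:complexesCoincide} transfers both conclusions to the non-crossing complexes. As a geometric cross-check one may instead read a facet of $\NCC[\dissection_{\bar Q}, {\dissection\dual\!\!_{\bar Q}}]$ as a (tagged) triangulation of the surface $\surface_{\bar Q}$, as was already begun in the geometric proof of Lemma~\ref{lemm:facesAreBounded}, and recover the same two numbers from the Euler characteristic via Remark~\ref{rem:propertiesSurface}.

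The main obstacle is the exchange step in the locally gentle setting, which is exactly the point where ``a little more care'' is needed beyond the gentle case. Defining and proving uniqueness of the exchanged walk $\omega'$ becomes delicate precisely when $\omega$ or a neighbouring walk contains an infinite oriented cycle $c$ with $c,c^2\notin I$: by Lemma~\ref{lem:nkCycles} such a walk has the rigid form $\sigma c^{\pm\infty}$ and spirals into a puncture, so the reconnection at a distinguished arrow must be analysed separately to guarantee that it produces a genuine non-kissing walk rather than a self-kissing one, and to exclude a second valid completion. Likewise, proving that the terminating statistic is well-founded when $F_\alpha$ is a priori infinite requires the maximal-occurrence analysis already used for Lemma~\ref{lemm:facesHaveMaximalElement}. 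I expect this control of the infinite cyclic walks, rather than the finite combinatorics inherited from the gentle case, to be the crux of the proof.
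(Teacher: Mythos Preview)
Your reduction of the unreduced to the reduced complex is fine, and you correctly identify Lemma~\ref{lem:ExistenceOfDistinguishedArrows} as relevant. But the paper does \emph{not} pass through flips or connectivity of the flip graph at all; purity is obtained by a direct double-counting of distinguished arrows, and this is both much shorter and avoids the difficulties you flag.

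The key input is Proposition~\ref{prop:numberOfDistinguishedArrows}: in any facet~$F$, every bending walk carries exactly two distinguished arrows, every finite straight walk exactly one, and every infinite straight walk none. On the other side, every arrow of~$Q_1\blossom$ is distinguished for exactly one walk of~$F$ (namely~$\distinguishedWalk{\alpha}{F}$). Writing~$b,s,p$ for the numbers of bending, finite straight, and infinite straight walks in~$F$, one gets
\[
2b + s + 0\cdot p \;=\; |Q_1\blossom| \;=\; 4|Q_0|-|Q_1|,
\]
and since~$s=2|Q_0|-|Q_1|$ this forces~$b=|Q_0|$ for every facet, with no flip argument needed. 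Your exchange program is not wrong in principle, but it requires flip-graph connectivity, which the paper never proves (Corollary~\ref{coro:thin} gives thinness, not connectivity); and in any case, to even define the flip you would want the two distinguished arrows on~$\omega$, i.e.\ Proposition~\ref{prop:numberOfDistinguishedArrows}, at which point the double-count already gives purity outright. The genuine locally gentle subtlety is concentrated in establishing Proposition~\ref{prop:numberOfDistinguishedArrows} (specifically that infinite straight walks have no distinguished arrow, handled via Lemma~\ref{lem:walksThroughcycles}), not in flips.
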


Note that Proposition \ref{prop:purity} was proved for~$\NKC$ in \cite[Cor.~2.29]{PaluPilaudPlamondon} in the case where~$kQ/I$ is finite-dimensional.
Note also that, by Theorem \ref{thm:complexesCoincide} above,~$\NKC$ and~$\NCC[\dissection_{\bar Q}, {\dissection\dual\!\!_{\bar Q}}]$ coincide, so it suffices to prove the statement for one of them.
The following definition from \cite[Def.~2.25]{PaluPilaudPlamondon} extends to the setting of locally gentle algebras.

\begin{definition}
\label{def:distinguishedWalksAndArrows}
Let~$\alpha \in Q\blossom_1$ be an arrow and let~$F$ be a face of~$\NKC$ containing the straight walk passing through~$\alpha$.
Consider the marked walk~$\omega_\star \eqdef \max_{\prec_\alpha} F_\alpha$ (which exists by Lemma~\ref{lemm:facesHaveMaximalElement} and Proposition~\ref{prop:facetsAreFinite}),
and let~$\omega$ be the walk obtained by forgetting the marked arrow in~$\omega_\star$. 
We say that~$\omega$ is the \defn{distinguished walk} of~$F$ at~$\alpha$ and that~$\alpha$ is a \defn{distinguished arrow} on~$\omega \in F$.
We will use the notation
\[
\distinguishedWalk{\alpha}{F} \eqdef \max\nolimits_{\prec_\alpha} F_\alpha
\qquad\text{and}\qquad
\distinguishedArrows{\omega}{F} \eqdef \set{\alpha \in \omega}{\distinguishedWalk{\alpha}{F} = \omega}.
\]
\end{definition}

We use this definition to prove the following statement.

\begin{lemma}
\label{lem:walksThroughcycles}
For any oriented cycle~$c$ in~$Q$ with~$c, c^2\notin I$, any maximal non-kissing face contains a walk of the form~$\sigma c^{\varepsilon\infty}$, for some~$\varepsilon\in\{\pm1\}$ and some substring~$\sigma$ distinct from~$^{\varepsilon\infty}c$ (\ie the walk is not an infinite straight walk).
\end{lemma}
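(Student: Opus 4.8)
Fix an oriented cycle $c$ in $Q$ with $c,c^2\notin I$, and let $F$ be a maximal non-kissing face of $\NKC$. I must exhibit a walk $\omega\in F$ of the form $\sigma c^{\varepsilon\infty}$ with $\sigma\neq{}^{\varepsilon\infty}c$, for some sign $\varepsilon\in\{\pm1\}$; in other words, $\omega$ is eventually cyclic around $c$ but is not the infinite straight walk ${}^{\varepsilon\infty}c^{\varepsilon\infty}$.

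\textbf{Overall strategy.} The plan is to use the machinery of distinguished walks and arrows from Definition~\ref{def:distinguishedWalksAndArrows}, combined with the structural constraint on non-kissing walks that pass through $c$ provided by Lemma~\ref{lem:nkCycles}. The key observation is that the infinite straight walk ${}^{\infty}c^{\infty}$ is itself a walk that never kisses anything (it is straight, so by Definition~\ref{def:nKc} it appears in every facet), hence ${}^{\infty}c^{\infty}\in F$. I then fix an arrow $\alpha$ occurring in $c$ and analyze the countercurrent order $\prec_\alpha$ restricted to $F_\alpha$, which is a total order by Lemma~\ref{lemm:countercurrentOrderIsTotal} and admits a maximal element $\distinguishedWalk{\alpha}{F}$ by Lemma~\ref{lemm:facesHaveMaximalElement} and Proposition~\ref{prop:facetsAreFinite}.

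\textbf{Key steps.} First I would record that the straight walk ${}^{\infty}c^{\infty}$ lies in $F$ and is marked at $\alpha$, so $F_\alpha\neq\emptyset$ and the distinguished walk $\omega\eqdef\distinguishedWalk{\alpha}{F}$ is well defined. Since $\omega$ contains the arrow $\alpha$ of $c$ and $\omega$ is non-kissing, Lemma~\ref{lem:nkCycles} forces $\omega$ to have the shape $\tau c^{\pm\infty}$ (after possibly cyclically reindexing $c$ so that $\alpha$ is $\alpha_1$): any walk through an arrow of $c$ must spiral into $c$ on at least one side. This already gives the desired eventually-cyclic form with $\varepsilon$ read off from the $\pm$ sign. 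It then remains to rule out the degenerate possibility that $\omega$ is the \emph{fully} straight walk ${}^{\varepsilon\infty}c^{\varepsilon\infty}$, i.e.\ that $\sigma={}^{\varepsilon\infty}c$. Here is where the maximality of the order $\prec_\alpha$ is essential: the straight walk ${}^{\infty}c^{\infty}$ is, by the definition of the countercurrent order and the convention that all marked occurrences of $\alpha$ in an infinite straight walk are identified, never the strict maximum unless $F_\alpha=\{{}^{\infty}c^{\infty}\}$. So I would split into two cases. If there exists any other non-kissing walk in $F$ through $\alpha$, then by Lemma~\ref{lem:ExistenceOfDistinguishedArrows} (applied to a non-straight walk through $\alpha$) one obtains a bending distinguished walk at some arrow along $c$, which necessarily has the form $\sigma c^{\varepsilon\infty}$ with $\sigma\neq{}^{\varepsilon\infty}c$. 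If instead $F_\alpha=\{{}^{\infty}c^{\infty}\}$, I would argue by a direct facet-completion/mutation argument that $F$ is not maximal: one can add to $F$ a walk of the form $\tau c^{\varepsilon\infty}$ obtained by ``cutting'' the straight walk at a blossom and spiraling into $c$, contradicting maximality of $F$.

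\textbf{Main obstacle.} The delicate point is the second case, namely showing that a facet cannot consist \emph{only} of the straight walk around $c$ in the slot $F_\alpha$. The combinatorial content is that a maximal face must ``use'' the puncture corresponding to $c$ (via Remark~\ref{rem:propertiesSurface} and the geometric picture of Lemma~\ref{lem:nkCycles}'s proof, $c$ corresponds to a puncture of $\surface_{\bar Q}$, and a full tagged triangulation must include at least one arc spiraling into that puncture). I expect the cleanest route is the geometric one: translate via Theorem~\ref{thm:complexesCoincide} into $\NCC[\dissection_{\bar Q}, {\dissection\dual\!\!_{\bar Q}}]$, observe that the puncture $c^\vee$ must be incident to some accordion of the facet that actually terminates at it (spiraling in), and that such an accordion is exactly $\curveof(\sigma c^{\varepsilon\infty})$ with $\sigma\neq{}^{\varepsilon\infty}c$. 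The alternative combinatorial route, inserting a spiraling walk while preserving non-kissing, requires checking that the newly added walk $\tau c^{\varepsilon\infty}$ kisses no element of $F$, which is where the bulk of the casework would lie; I would rely on Lemma~\ref{lem:nkCycles} to constrain how other walks in $F$ can interact with $c$ and thereby keep this verification short.
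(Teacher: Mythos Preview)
Your proposal has a genuine gap in its key step. You write that since the distinguished walk $\omega=\distinguishedWalk{\alpha}{F}$ contains the arrow $\alpha$ of $c$, Lemma~\ref{lem:nkCycles} forces $\omega$ to have the shape $\tau c^{\pm\infty}$. But Lemma~\ref{lem:nkCycles} requires $\omega$ to contain the \emph{entire cycle} $c$ as a substring, not merely one arrow of it. A non-self-kissing walk can perfectly well pass through $\alpha$ and then leave the cycle immediately: at $t(\alpha)$ it may turn back along the inverse of the other incoming arrow, and similarly at $s(\alpha)$. So knowing that $\omega$ is $\prec_\alpha$-maximal and contains $\alpha$ tells you nothing, a priori, about whether $\omega$ spirals into $c$. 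The same misapplication recurs in your Case~1, where you claim that a bending distinguished walk at an arrow of $c$ ``necessarily has the form $\sigma c^{\varepsilon\infty}$''; this is unjustified for the same reason, and Lemma~\ref{lem:ExistenceOfDistinguishedArrows} does not guarantee that the arrow $\beta$ it produces lies on $c$ either.

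The paper's combinatorial proof avoids this pitfall by a different maneuver: rather than looking at the distinguished walk at an arrow \emph{on} $c$, it first looks at the distinguished walk $\mu$ at the arrow $\gamma$ \emph{leaving} $c$ (the one with $\alpha\gamma\in I$), uses $\mu$ to manufacture a candidate spiraling walk $\nu=\mu_2^{-1}\gamma^{-1}c^\infty$, and then argues via maximality of $F$ and compatibility constraints that either $\nu\in F$ or an obstruction $\lambda\in F$ already contains an arrow of $c$; a second, similar construction then finishes the job. Your geometric idea in Case~2 --- that on the surface a maximal collection of non-crossing accordions must contain an arc spiraling into the puncture corresponding to $c$ --- is in fact the paper's entire geometric proof, and it handles the lemma outright (there is no need for your Case~1/Case~2 split). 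If you want a purely combinatorial argument, you will need to construct explicit candidate spiraling walks and verify their compatibility with $F$, as the paper does; simply taking $\distinguishedWalk{\alpha}{F}$ does not suffice.
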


\begin{proof}[Combinatorial proof]
Consider a maximal non-kissing face~$F \in \NKC$ and an arrow~$\alpha$ in~$c$.
Let~$\beta, \gamma \in Q_1\blossom$ be such that~$t(\alpha) = s(\beta) = s(\gamma)$ and~$\alpha\beta \notin I$ while~$\alpha\gamma \in I$ (in other words, $\beta$ is the next arrow along~$c$ after~$\alpha$ and~$\gamma$ is the other outgoing arrow at~$t(\alpha)$).
Consider the walk~$\mu \eqdef \distinguishedWalk{\gamma}{F}$ and decompose it as~$\mu = \mu_1 \gamma \mu_2$ according to this occurence of~$\gamma$ on~$\mu$.
Consider the walk~$\nu \eqdef \mu_2^{-1} \gamma^{-1} c^\infty$.
If~$\nu$ is contained in~$F$, then we have found a walk of the desired form (remember that walks are considered as undirected).
Otherwise, there exists a walk~$\lambda$ of~$F$ preventing~$\nu$ from belonging to~$F$.
Let~$\sigma$ be a common substring of~$\nu$ and~$\lambda$ where they kiss.
Since~$\lambda$ must be compatible with~$\mu$ and with the straight walk~$c^\infty$, the substring~$\sigma$ must contain~${t(\alpha) = s(\beta)}$.
If~$\lambda$ contains neither~$\alpha$ nor~$\beta$, then it contains~$\gamma$ and enters $\nu$ at~$s(\gamma)$.
Since $\lambda$ kisses~$\nu$, it also enters~$\mu_2$, contradicting the maximality of~$\mu$ for~$\prec_\gamma$.
Therefore, $\lambda$ contains either~$\alpha$ or~$\beta$.
We have thus found a bending walk~$\lambda \in F$ containing an arrow~$\alpha$ or~$\beta$ of~$c$, say~$\alpha$.
We can now moreover assume that~$\lambda = \distinguishedWalk{\alpha}{F}$.
If~$\lambda$ starts by~$^\infty c$, or ends by~$c^\infty$, then we have found a walk of the desired form.
Otherwise, decompose this walk as~$\lambda = \lambda_1 \alpha \lambda_2$ and consider the walk~$\omega = \lambda_1 c^\infty$.
We claim that~$\omega$ belongs to~$F$.
Otherwise, there exists a walk~$\rho$ preventing~$\omega$ from belonging to~$F$.
Let~$\tau$ be a common substring of~$\omega$ and~$\rho$ where they kiss.
Since~$\omega$ must be compatible with~$\lambda$ and with the straight walk~$c^\infty$, the substring~$\tau$ must strictly contain the substring of~$\lambda$ along the cycle~$c$.
It follows that $\rho$ leaves~$\omega$ when it is on the cycle~$c$, thus with an incoming arrow, contradicting the maximality of~$\lambda$ for~$\prec_\alpha$.

\para{Geometric proof}
The statement of the lemma translates on~$\surface_{\bar Q}$ as follows: for any facet~$F$ of the non-crossing complex~$\NCC$ and any puncture of the dissection~$\dissection_{\bar Q}$, there exists a curve in~$F$ which goes to the puncture.
Assume that there is a puncture with no curves of~$F$ going to it.
Construct a curve~$\gamma$ as follows:~$\gamma$ first unwinds from the puncture, then~$\gamma$ leaves the local picture around the puncture by turning around another marked point (marked~$u$ on the picture below).

\begin{figure}[h]
	\capstart
	\centerline{\includegraphics[scale=.7]{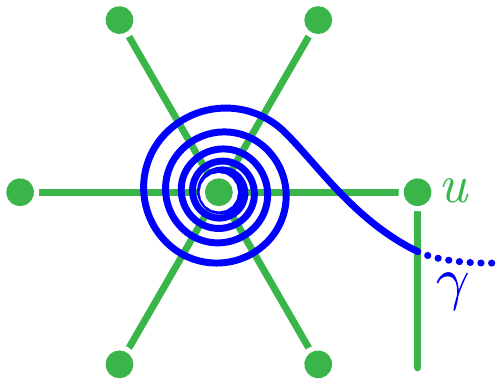}}
	\caption{The curve~$\gamma$ leaving the puncture.}
	\label{fig:aroundPuncture2}
\end{figure}

Two situations can occur:
\begin{itemize}
\item Firstly, if there are no curves of~$F$ closer to~$u$ than~$\gamma$, then~$\gamma$ can circle around~$u$ until it reaches the boundary (if~$u$ is on the boundary) or infinitely (if~$u$ is a puncture).
Then~$\gamma$ is a~$\dissection_{\bar Q}$-accordion which crosses none of those in~$F$, contradicting the maximality of~$F$.
\item Secondly, if there is a curve of~$F$ closer to~$u$ than~$\gamma$, then let~$\delta$ be the one furthest from~$u$ ($\delta$ exists by Lemma \ref{lemm:facesHaveMaximalElement}).
From there, let~$\gamma$ follow~$\delta$ on the surface until it either reaches the boundary or circles infinitely around a puncture.  
Then~$\gamma$ is compatible with all curves in~$F$, a contradiction.
\end{itemize}
Thus~$F$ contains a curve going to the puncture.
\end{proof}

The following result extends \cite[Prop.~2.28]{PaluPilaudPlamondon} to the case of locally gentle algebras.

\begin{proposition}
\label{prop:numberOfDistinguishedArrows}
In a non-kissing facet~$F$ of~$\NKC$,
\begin{itemize}
\item each bending walk of~$F$ has exactly two distinguished arrows,
\item each finite straight walk of~$F$ has exactly one distinguished arrow,
\item each infinite straight walk of~$F$ has no distinguished arrows.
\end{itemize}
\end{proposition}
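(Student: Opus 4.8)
The plan is to separate the three cases according to the type of walk, and to obtain each count as matching lower and upper bounds: the lower bounds come essentially for free from Lemma~\ref{lem:ExistenceOfDistinguishedArrows}, while the upper bounds require a uniqueness argument in each direction of traversal. Throughout, fix a non-kissing facet~$F$; recall that by Proposition~\ref{prop:facetsAreFinite} it is finite, and that by Lemma~\ref{lemm:facesHaveMaximalElement} the maximum $\distinguishedWalk{\alpha}{F}=\max_{\prec_\alpha} F_\alpha$ exists for every arrow~$\alpha\in Q_1\blossom$, so that Definition~\ref{def:distinguishedWalksAndArrows} is meaningful even when $F_\alpha$ is infinite. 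Call an arrow of~$\omega$ \emph{forward} or \emph{backward} according to the way it points along a fixed traversal of~$\omega$; the direction flips exactly at the corners (peaks and deeps) of~$\omega$, so a bending walk carries arrows of both kinds whereas a finite straight walk carries arrows of only one kind. Applying Lemma~\ref{lem:ExistenceOfDistinguishedArrows} to one forward and one backward arrow of a bending walk produces a distinguished arrow of each kind, hence at least two; applied to any arrow of a finite straight walk it produces at least one. This gives $|\distinguishedArrows{\omega}{F}|\ge 2$ for bending walks and $\ge 1$ for finite straight walks.

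The heart of the matter is the upper bound: $\omega$ has \emph{at most one} distinguished arrow among its forward arrows, and symmetrically among its backward arrows. I would argue by contradiction. Suppose $\alpha$ and $\alpha'$ are two distinct forward arrows with $\distinguishedWalk{\alpha}{F}=\distinguishedWalk{\alpha'}{F}=\omega$, say with $\alpha$ preceding~$\alpha'$ along the traversal. Since both point forward, the number of corners of~$\omega$ strictly between them is even, so the portion of~$\omega$ from~$\alpha$ to~$\alpha'$ changes direction at least twice and hence contains an intermediate deep and peak. I would then use the distinguished walk at an arrow adjacent to this intermediate corner, together with the straight walk through~$\alpha$, to exhibit a walk of~$F$ that enters the maximal common substring of~$\omega$ containing~$\alpha$ in the direction pointed by~$\alpha$; this contradicts the maximality of~$\omega$ for~$\prec_\alpha$. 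This is exactly the mechanism of \cite[Prop.~2.28]{PaluPilaudPlamondon}, and the combinatorial argument there transfers once one knows that the relevant maxima exist, which is guaranteed here by Lemma~\ref{lemm:facesHaveMaximalElement}. Combined with the lower bounds, this yields exactly two distinguished arrows for each bending walk and exactly one for each finite straight walk.

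There remain the infinite straight walks, which are the genuinely new feature of the locally gentle setting and which I expect to be the main obstacle, since Lemma~\ref{lem:ExistenceOfDistinguishedArrows} deliberately excludes them. Let $\omega={}^{\infty}c^{\infty}$ be such a walk, with~$c$ a primitive cycle, and let~$\alpha$ be any arrow of~$c$. By Lemma~\ref{lem:walksThroughcycles}, $F$ contains a bending walk $\lambda=\sigma c^{\varepsilon\infty}$ running along the same cycle; in particular $\lambda$ passes through~$\alpha$. Taking the occurrence of~$\alpha$ in~$\lambda$ that is maximal for~$\prec_\alpha$ (it exists by Lemma~\ref{lemm:facesHaveMaximalElement}, and as in that proof is the occurrence closest to~$\sigma$), the maximal common substring of~$\omega$ and~$\lambda$ containing it is infinite on the $c^{\varepsilon\infty}$ side and breaks off on the other side exactly where $\sigma$ leaves the cycle. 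At that breaking point~$\omega$ continues along~$c$, hence in the direction of~$\alpha$, whereas~$\lambda$ turns away; therefore $\omega\prec_\alpha\lambda$ and $\omega$ is not maximal at~$\alpha$. As~$\alpha$ was arbitrary, $\omega$ has no distinguished arrow. Geometrically, via Theorem~\ref{thm:complexesCoincide} and Remark~\ref{rem:countercurrentOrderOnSurface}, this says that the accordion attached to a puncture is innermost at every surrounding angle, hence never the furthest curve and never distinguished.

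The difficulty is thus confined to two points proper to the locally gentle case: ensuring that the countercurrent maxima used throughout genuinely exist even when $F_\alpha$ is infinite (handled by Lemmas~\ref{lemm:facesHaveMaximalElement} and~\ref{lem:nkCycles}), and treating the infinite straight walks, where the role played by a blossom end in the finite theory is replaced by the cyclic escape supplied by Lemma~\ref{lem:walksThroughcycles}. Once these are in place, the finite-type argument of \cite[Prop.~2.28]{PaluPilaudPlamondon} for the upper bound goes through unchanged, and the three counts follow.
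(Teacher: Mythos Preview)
Your proposal is correct and follows essentially the same approach as the paper: upper bound via \cite[Prop.~2.28]{PaluPilaudPlamondon} (at most one distinguished arrow per direction), lower bound via Lemma~\ref{lem:ExistenceOfDistinguishedArrows}, and the infinite straight case via Lemma~\ref{lem:walksThroughcycles}. You spell out in more detail than the paper why Lemma~\ref{lem:walksThroughcycles} forces $\omega={}^\infty c^\infty \prec_\alpha \lambda$ at every arrow~$\alpha$ of~$c$, which the paper leaves implicit; one small caveat is that in your sketch of the upper bound, the clause ``changes direction at least twice'' need not hold when $\alpha$ and $\alpha'$ lie on the same straight segment, but since you ultimately defer to \cite[Prop.~2.28]{PaluPilaudPlamondon} this does not affect the argument.
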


\begin{proof}
The proof of \cite[Prop.~2.28]{PaluPilaudPlamondon} applies here to show that each walk in the non-kissing facet~$F$ has at most one distinguished arrow in each direction.
Lemma~\ref{lem:ExistenceOfDistinguishedArrows} therefore shows the first two points.
Finally, Lemma~\ref{lem:walksThroughcycles} shows the last point.
\end{proof}

We can now prove the purity result.

\begin{proof}[Proof of Proposition \ref{prop:purity}.]
The proof follows that of~\cite[Cor.~2.29]{PaluPilaudPlamondon}.
Let~$b$ denote the number of bending walks, $s$ the number of finite straight walks, and $p$ the number of infinite straight walks in a non-kissing facet~$F$ of~$\NKC$.
Since each finite straight walk uses two blossom arrows, we have~$s = 2|Q_0|-|Q_1|$ (see Remark~\ref{rem:sizeBlossomingQuiver}).
Moreover, the infinite straight walks are just the primitive oriented cycles in~$\bar Q$.

Using Proposition~\ref{prop:numberOfDistinguishedArrows}, we obtain by double-counting that
\[
2 * b + 1 * s + 0 * p= |Q_1\blossom| = 4|Q_0|-|Q_1|
\]
from which we derive that~$b = |Q_0|$ and~$b+s+p = 3|Q_0|-|Q_1|+p$.
\end{proof}

\subsection{Thinness of the non-crossing and non-kissing complexes, mutation}

A simplicial complex is \defn{thin} if each of its codimension~$1$ faces is contained in exaclty two facets.
We state the following results, which generalize \cite[Prop.~2.33 \& Cor.~2.35]{PaluPilaudPlamondon}, whose proofs apply without change.

\begin{proposition}
\label{prop:flip}
Let~$F$ be a non-kissing facet in~$\NKC$ and let~$\omega \in F$ be a bending walk.
Let~$\alpha$ and~$\beta$ be the distinguished arrows of~$\omega$ (see Proposition \ref{prop:numberOfDistinguishedArrows}), and~$\sigma$ be the distinguished substring of~$\omega$, which splits~$\omega$ into~$\omega = \rho \sigma \tau$.
Let~$\alpha'$ and~$\beta'$ be the other two arrows of~$Q_1\blossom$ incident to the endpoints of~$\sigma$ and such that~$\alpha'\alpha \in I$ or~$\alpha\alpha' \in I$, and $\beta'\beta \in I$ or~$\beta\beta' \in I$.
Let~$\mu \eqdef \distinguishedWalk{\alpha'}{F \ssm \{\omega\}}$ and~$\nu \eqdef \distinguishedWalk{\beta'}{F \ssm \{\omega\}}$ be the distinguished walks of~$F \ssm \{\omega\}$ at~$\alpha'$ and~$\beta'$ respectively.
Then
\begin{enumerate}[(i)]
\item The walk~$\mu$ splits into~$\mu = \rho' \sigma \tau$ and the walk~$\nu$ splits into~$\nu = \rho \sigma \tau'$.
\item The walk~$\omega' \eqdef \rho' \sigma \tau'$ is kissing~$\omega$ but no other~walk~of~$F$. Moreover, $\omega'$ is the only other (undirected) walk besides~$\omega$ which is not kissing any other walk of~$F \ssm \{\omega\}$.
\end{enumerate}
\end{proposition}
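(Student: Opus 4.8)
The statement is a flip (mutation) result, so the plan is to mirror the proof of \cite[Prop.~2.33]{PaluPilaudPlamondon}, checking at each turn that the arguments survive infinite walks. Write $v_L, v_R$ for the two endpoints of the distinguished substring $\sigma$, so that in $\omega = \rho\sigma\tau$ the arrow $\alpha$ is incident to $v_L$ and $\beta$ to $v_R$; by Definition~\ref{def:distinguishedWalksAndArrows} the walk $\omega$ is the $\prec_\alpha$-maximal and the $\prec_\beta$-maximal element of $F$, these maxima existing by Lemma~\ref{lemm:facesHaveMaximalElement} and Proposition~\ref{prop:facetsAreFinite}. The first thing I would do is read off, from the locally gentle relations $\alpha'\alpha\in I$ or $\alpha\alpha'\in I$ (and likewise for $\beta',\beta$), that at each end the alternative arrow $\alpha'$ (resp.\ $\beta'$) points in the direction opposite to $\alpha$ (resp.\ $\beta$) relative to $\sigma$. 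Since $v_L$ and $v_R$ are honest vertices, $\sigma$ is finite; this direction bookkeeping is exactly what will make $\sigma$ a top substring of one of $\omega,\omega'$ and a bottom substring of the other in the sense of Definition~\ref{def:topBottom}.

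For part~(i) I would first observe that $\mu$ and $\nu$ already live in $F$. Indeed, by Proposition~\ref{prop:numberOfDistinguishedArrows} the only distinguished arrows of $\omega$ are $\alpha$ and $\beta$, so $\omega$ is \emph{not} $\prec_{\alpha'}$-maximal in $F_{\alpha'}$; deleting a non-maximal element does not move the maximum, whence $\mu \eqdef \distinguishedWalk{\alpha'}{F\ssm\{\omega\}} = \distinguishedWalk{\alpha'}{F} \in F$, and symmetrically $\nu \in F$. In particular $\mu$ and $\nu$ are non-kissing with $\omega$. Now $\mu$ runs through $\alpha'$ and $\omega$ through $\alpha$, the two arrows entering $\sigma$ at $v_L$; as both are maximal at their respective arrows and compatible, the same rigidity used in Lemma~\ref{lem:ExistenceOfDistinguishedArrows} forces $\mu$ to hug $\omega$ all along $\sigma$ (an early split inside $\sigma$ would make $\mu$ kiss $\omega$ or a flanking wall of $F$). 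Beyond $v_R$ the walk $\mu$ must continue along $\tau$ rather than $\tau'$, for $\rho'\sigma\tau'$ would kiss $\omega$ by the direction bookkeeping above, contradicting $\mu\in F$. Hence $\mu = \rho'\sigma\tau$, and symmetrically $\nu = \rho\sigma\tau'$, which is part~(i).

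For part~(ii) set $\omega' \eqdef \rho'\sigma\tau'$. I would check it is a legal walk: $\rho'$ glues to $\sigma$ through $\alpha'$ because $\mu=\rho'\sigma\tau$ is a walk, $\sigma$ glues to $\tau'$ through $\beta'$ because $\nu=\rho\sigma\tau'$ is a walk, and no factor of $I$ is created since the forbidden factors $\alpha'\alpha$ and $\beta'\beta$ (or their reverses) do not occur in $\omega'$. Next, $\omega'$ and $\omega$ coincide exactly on the finite substring $\sigma$ and split at both ends, $\omega$ using $\alpha,\beta$ and $\omega'$ using $\alpha',\beta'$; by the first step $\sigma\in\Sigma_\top(\omega)\cap\Sigma_\bottom(\omega')$ (or with the roles reversed), so $\omega$ and $\omega'$ kiss by Definition~\ref{def:kiss}. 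Finally $\omega'$ is compatible with every walk of $F\ssm\{\omega\}$ by localisation: any kiss involving $\omega'$ would occur on a finite top-or-bottom substring lying in its left half $\rho'\sigma$ (shared with $\mu$) or its right half $\sigma\tau'$ (shared with $\nu$), hence would already be a kiss of $\mu$ or of $\nu$ with a walk of $F\ssm\{\omega\}$, which is impossible as $\mu,\nu\in F\ssm\{\omega\}$.

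The remaining clause of~(ii), uniqueness, is where I expect the real work. Given any walk $\omega''\ne\omega$ compatible with all of $F\ssm\{\omega\}$, a distinguished-arrow count (Proposition~\ref{prop:numberOfDistinguishedArrows}) forces $\omega''$ to occupy both angles freed by removing $\omega$, \ie to pass through $\alpha'$ and $\beta'$; the maximality argument of part~(i), now applied to $\omega''$, pins its two halves to those of $\mu$ and $\nu$, giving $\omega''=\rho'\sigma\tau'=\omega'$. The genuine difficulty specific to the locally gentle setting is that $\rho,\rho',\tau,\tau',\mu,\nu$ may be infinite and spiral into punctures, so none of these arguments can be run by induction on length; the control over this cyclic behaviour is precisely what Lemmas~\ref{lem:nkCycles} and~\ref{lem:walksThroughcycles} supply, and with them in hand the finite-length proof of \cite[Prop.~2.33]{PaluPilaudPlamondon} transfers verbatim. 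A parallel geometric rendering is available through Theorem~\ref{thm:complexesCoincide}: deleting the accordion $\curveof(\omega)$ opens a region of the partial dissection $F\ssm\{\omega\}$ admitting a unique completing accordion $\curveof(\omega')$, which flips $\curveof(\omega)$; I would keep this only as a sanity check on the combinatorial argument.
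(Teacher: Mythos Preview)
Your approach is essentially the paper's: it simply states that the proofs of \cite[Prop.~2.33 \& Cor.~2.35]{PaluPilaudPlamondon} apply without change, and you correctly identify that the argument is the flip proof from the gentle case, with the finiteness supplied by Lemma~\ref{lemm:facesHaveMaximalElement} and Proposition~\ref{prop:facetsAreFinite}. Your sketch is more detailed than the paper's one-line deferral; the only caveat is that your invocation of Lemmas~\ref{lem:nkCycles} and~\ref{lem:walksThroughcycles} for the uniqueness clause is not strictly needed (the paper regards the transfer as verbatim), and your ``localisation'' argument for compatibility of $\omega'$ glosses over the case where a hypothetical kiss straddles both $\rho'$ and $\tau'$ through $\sigma$---but that case is handled exactly as in \cite{PaluPilaudPlamondon}, so the deferral covers it.
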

The walk~$\omega'$ of Proposition \ref{prop:flip} is the \defn{flip} or \defn{mutation} of~$F$ at~$\omega$.
The flip is \defn{increasing} if~$\sigma$ is on top of~$\omega$, and \defn{decreasing} otherwise.

\begin{figure}[t]
	\capstart
	\centerline{\includegraphics[scale=1]{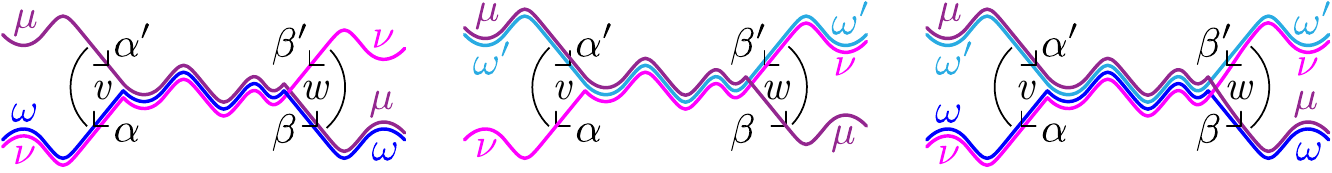}}
	\caption{A schematic representation of flips in the non-kissing complex. The flip exchanges the walk~$\omega = \rho \sigma \tau$ in the facet~$F$ (left) with the walk~$\omega' = \rho' \sigma \tau'$ in the facet~$F'$ (middle), using the walks~$\mu = \rho' \sigma \tau$ and~$\nu = \rho \sigma \tau'$. The walk~$\omega'$ kisses~$\omega$ but no other walk of~$F$ (right).}
	\label{fig:flip1}
\end{figure}

\begin{corollary}
\label{coro:thin}
The non-kissing complex~$\RNKC$ is thin and is thus a pseudomanifold without boundary.
\end{corollary}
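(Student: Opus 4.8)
The plan is to read off thinness directly from the flip construction of Proposition~\ref{prop:flip}, and then to note that thinness combined with the purity of Proposition~\ref{prop:purity} is exactly the pseudomanifold-without-boundary property.

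First I would recall the dimension count. By Proposition~\ref{prop:facetsAreFinite} every face of~$\RNKC$ is contained in a facet, and by Proposition~\ref{prop:purity} that facet has dimension~$|Q_0|$; thus every facet consists of exactly~$|Q_0|+1$ pairwise non-kissing bending walks, while a codimension~$1$ face consists of exactly~$|Q_0|$ of them. Fix such a codimension~$1$ face~$G$ and a facet~$F \supseteq G$. Then~$F = G \sqcup \{\omega\}$ for a single walk~$\omega$, which is bending because~$F$ is a facet of the \emph{reduced} complex.

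Next I would apply Proposition~\ref{prop:flip} to the pair~$(F, \omega)$. Its conclusion~(ii) supplies the flip~$\omega' \ne \omega$ and asserts two things: that~$\omega'$ kisses~$\omega$ but no other walk of~$F$, and that~$\omega, \omega'$ are the \emph{only} walks not kissing any walk of~$G = F \ssm \{\omega\}$. The first assertion makes~$F' \eqdef G \sqcup \{\omega'\}$ a non-kissing collection of the right cardinality, hence a facet; moreover~$\omega'$ is bending, since a walk kissing~$\omega$ cannot be straight (no straight walk kisses anything, by Definition~\ref{def:nKc}), so~$F'$ really is a facet of~$\RNKC$. The second assertion is the decisive one: the only walks extending~$G$ to a facet are~$\omega$ and~$\omega'$, so~$G$ is contained in precisely the two facets~$F$ and~$F'$. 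This is thinness.

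Finally, a pure simplicial complex in which every codimension~$1$ face lies in exactly two facets is a pseudomanifold without boundary, so combining this thinness with the purity of Proposition~\ref{prop:purity} finishes the proof. I do not expect a genuine obstacle here, as Proposition~\ref{prop:flip} already carries the entire combinatorial burden---it both produces the opposite facet and rules out any third one. The only points needing attention are bookkeeping: that an arbitrary codimension~$1$ face extends to a facet (this is where finiteness and purity enter) and that the flip~$\omega'$ is bending, so that the second facet~$F'$ lives in the reduced complex~$\RNKC$ and not merely in~$\NKC$.
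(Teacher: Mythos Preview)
Your proposal is correct and follows exactly the approach the paper intends: the authors state Proposition~\ref{prop:flip} and Corollary~\ref{coro:thin} together as generalizations of \cite[Prop.~2.33 \& Cor.~2.35]{PaluPilaudPlamondon} ``whose proofs apply without change,'' and that proof is precisely the one you wrote---use part~(ii) of the flip proposition to show that a codimension~$1$ face lies in exactly the two facets~$G \cup \{\omega\}$ and~$G \cup \{\omega'\}$, then invoke purity. One incidental remark: the proof of Proposition~\ref{prop:purity} actually yields~$b = |Q_0|$ bending walks in a facet (so codimension~$1$ faces have~$|Q_0|-1$ walks), but this off-by-one in the cardinality bookkeeping does not affect your argument in any way.
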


The mutation of non-kissing facets translates on~$\surface(\bar Q)$ as follows.

\begin{proposition}
\label{prop:mutationAccordions}
Let~$F$ be a facet of the non-crossing complex~$\NCC$, and let~$\gamma \in F$.  We represent all curves on the universal cover of~$\surface(\bar Q)$.
Then~$F$ forms a~$B$-dissection of the surface, and~$\gamma$ lies in at least one of two configutations~$\ZZZ$ and~$\SSS$ with the other curves of the dissection (where the endpoints may go to infinity if punctures are involved).  See \fref{fig:flip3}.
\begin{itemize}
\item Exactly one of the curves obtained by changing the~$\SSS$ to a~$\ZZZ$ (and conversely) by moving the endpoints of~$\gamma$ along the top and bottom curves is a~$\dissection$-accordion.
\item This~$\dissection$-accordion yields the flip or mutation of~$F$ at~$\gamma$.
\item If the flip is increasing, then the~$\ZZZ$ is turned into a~$\SSS$.  If the flip is decreasing, then the~$\SSS$ is turned in to a~$\ZZZ$.
\end{itemize}
\begin{figure}[h]
	\capstart
	\centerline{\includegraphics[scale=1]{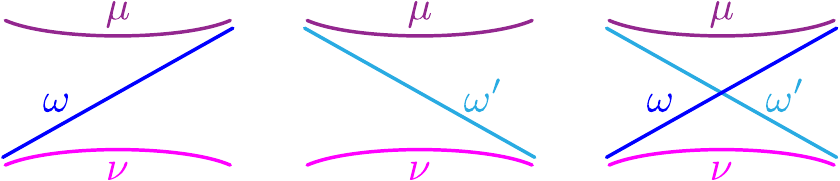}}
	\caption{An increasing flip turns a~$\ZZZ$ (formed by~$\mu$, $\omega$ and~$\nu$) to a~$\SSS$ (formed by~$\mu$, $\omega'$ and~$\nu$). The two curves~$\omega$ and~$\omega'$ are crossing.}
	\label{fig:flip3}
\end{figure}
\end{proposition}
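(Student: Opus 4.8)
The plan is to derive the statement by transporting the purely combinatorial flip of Proposition~\ref{prop:flip} to the surface through the dictionary of Theorem~\ref{thm:complexesCoincide}. Write $\gamma = \curveof(\omega)$ and keep the notation of Proposition~\ref{prop:flip}: the distinguished walk $\omega = \rho\sigma\tau$ with distinguished arrows $\alpha,\beta$, together with $\mu = \rho'\sigma\tau$ and $\nu = \rho\sigma\tau'$, and the flip $\omega' \eqdef \rho'\sigma\tau'$. First I would justify that $F$ yields a $B$-dissection. Since $F$ is a facet, it is finite by Proposition~\ref{prop:facetsAreFinite} and maximal of the dimension computed in Proposition~\ref{prop:purity}; by Proposition~\ref{prop:walks=arcs} and Lemma~\ref{lem:nonKissing=nonCrossing} the curves $\set{\curveof(\eta)}{\eta\in F}$ form a maximal collection of pairwise non-crossing $\dissection_{\bar Q}$-accordions, hence of pairwise non-crossing $B$-curves. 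Passing to the universal cover turns the curves spiralling into punctures into honest bi-infinite arcs, so this collection becomes a genuine $B$-dissection in which every curve has two well-defined endpoints, possibly at infinity.

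Next I would translate the local picture of the flip. Because $\mu$, $\omega$ and $\nu$ share the substring $\sigma$, the curves $\curveof(\mu)$, $\gamma=\curveof(\omega)$ and $\curveof(\nu)$ cross exactly the same angles of $\dissection_{\bar Q}$ along $\sigma$ (by Definition~\ref{def:curveOfAWalk}), so they run parallel there and branch apart only at the two ends of $\sigma$, where $\omega$ follows $\rho$ on one side (agreeing there with $\nu$) and $\tau$ on the other (agreeing there with $\mu$). On the universal cover this exhibits $\gamma$ as one diagonal of the strip bounded by the top curve $\curveof(\mu)$ and the bottom curve $\curveof(\nu)$, and the two ways $\gamma$ can cut across this strip are precisely the configurations $\ZZZ$ and $\SSS$ of Figure~\ref{fig:flip3}. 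Sliding the two endpoints of $\gamma$ along these top and bottom curves, from the $\rho,\tau$ branches to the $\rho',\tau'$ branches, produces the opposite diagonal $\rho'\sigma\tau' = \omega'$ and so interchanges $\ZZZ$ and $\SSS$, exactly as drawn in Figures~\ref{fig:flip1} and~\ref{fig:flip3}.

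To single out the admissible reconnection, I would appeal to Proposition~\ref{prop:flip}(ii): $\omega'$ is a genuine walk of $\bar Q$ that kisses $\omega$ but no other walk of $F$, so by Proposition~\ref{prop:walks=arcs} the curve $\curveof(\omega')$ is a $\dissection_{\bar Q}$-accordion, it crosses $\gamma$ and nothing else, and $(F\ssm\{\omega\})\cup\{\omega'\}$ is again a facet. The same proposition asserts that $\omega'$ is the \emph{only} walk besides $\omega$ that is non-kissing with all of $F\ssm\{\omega\}$; hence, of the curves one can draw by sliding both endpoints of $\gamma$ across the strip, only the one tracing $\omega'$ is a $\dissection_{\bar Q}$-accordion compatible with $F\ssm\{\omega\}$, the other failing to be reduced (it would cross some edge of $\dissection_{\bar Q}$ twice in succession, so it is discarded once put in minimal position as in Definition~\ref{def:accordion}). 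This curve is the mutation of $F$ at $\gamma$. For the orientation bullet I would compare the two conventions: $\sigma$ being on top of $\omega$ (Definition~\ref{def:topBottom}) means the two arrows of $\omega$ at the ends of $\sigma$ point outwards, which is exactly the shape labelled $\ZZZ$ in Figure~\ref{fig:flip3}; since the flip is increasing precisely when $\sigma$ is on top, increasing flips turn $\ZZZ$ into $\SSS$, and dually for a bottom substring, which settles the last point.

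The main obstacle will be the punctured case, where $\omega$, $\mu$ or $\nu$ spirals into a puncture and the strip between $\curveof(\mu)$ and $\curveof(\nu)$ degenerates with endpoints receding to infinity. There I would lean on Lemma~\ref{lemm:facesHaveMaximalElement} to guarantee that the relevant occurrences of the distinguished arrows are the ones furthest from the puncture, and on Lemma~\ref{lem:walksThroughcycles} to control which walks of $F$ actually reach the puncture; together these ensure that the diagonal picture, and hence the $\ZZZ$/$\SSS$ dichotomy and the uniqueness of the admissible reconnection, persist as a well-defined limiting configuration on the universal cover.
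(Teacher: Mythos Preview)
Your approach is essentially the same as the paper's: both transport the combinatorial flip of Proposition~\ref{prop:flip} to the surface via the dictionary of Theorem~\ref{thm:complexesCoincide}, read off the $\ZZZ$/$\SSS$ configuration from the shared substring~$\sigma$, and identify the flipped curve as~$\curveof(\omega')$. The paper's argument is terser---it simply draws the local picture (Figure~\ref{fig:flip2}) and observes that the only $\dissection$-accordions lying between~$\curveof(\mu)$ and~$\gamma$ (resp.~between~$\curveof(\nu)$ and~$\gamma$) are these two curves themselves, which forces adjacency in the $B$-dissection and hence the $\ZZZ$/$\SSS$ shape---but the substance is the same.

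One sentence of yours is not quite right: the claim that ``the other'' reconnection ``fails to be reduced (it would cross some edge of~$\dissection_{\bar Q}$ twice in succession)'' is not the correct mechanism. When~$\gamma$ sits in both a~$\ZZZ$ and an~$\SSS$ configuration (with respect to its two neighbouring cells in the $B$-dissection~$F$), the two candidate diagonals are genuine $B$-curves in minimal position; the point is rather that only one of them is a $\dissection$-accordion, and this is exactly what is singled out by the uniqueness clause of Proposition~\ref{prop:flip}(ii) that you already invoke. So the conclusion stands, but you should drop the parenthetical justification and rely solely on Proposition~\ref{prop:flip}(ii). The paper does not spell this out either; it just writes ``We see that~$\curveof(\omega')$ is then obtained as claimed.''
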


\begin{proof}
Let~$\omega = \walk(\gamma)$.  Using the notation of Proposition \ref{prop:flip}, the local configuration around~$\omega$ (as in Figures~\ref{fig:flip1} and~\ref{fig:flip2}) translates on the (universal cover of the) surface~$\surface(\bar Q)$ as follows.
\begin{figure}[h]
	\capstart
	\centerline{\includegraphics[scale=1]{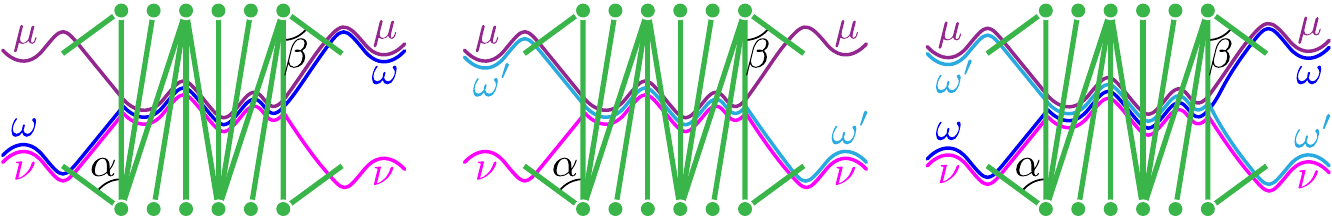}}
	\caption{A schematic representation of flips in the non-crossing complex. The flip exchanges the curve~$\omega$ in the facet~$F$ (left) with the curve~$\omega'$ in the facet~$F'$ (middle), using the curves~$\mu$ and~$\nu$. The curve~$\omega'$ crosses~$\omega$ but no other curve of~$F$ (right).}
	\label{fig:flip2}
\end{figure}
We see that the only two~$\dissection$-accordions found between~$\mu$ and~$\curveof(\omega)$ are~$\mu$ and~$\curveof(\omega)$ themselves.
The same holds by replacing~$\mu$ with~$\nu$.
Thus~$\mu, \nu$ and~$\curveof(\omega)$ lie in a~$\SSS$ or a~$\ZZZ$.
We see that~$\curveof(\omega')$ is then obtained as claimed. The direction of the flip translates from the picture in a similar way.
\end{proof}

\section{$\b{g}$-vectors, $\b{c}$-vectors, and $\b{d}$-vectors}
\label{sec:vectors}

In this section, we define three relevant families of vectors associated to walks in the non-kissing complex of a locally gentle quiver~$\bar Q$.
In contrast to the $\b{d}$-vectors, the $\b{g}$- and $\b{c}$-vectors were already considered in the non-kissing complex of a gentle quiver~\cite[Sect.~4.2]{PaluPilaudPlamondon}, and their definition immediately generalizes to the case of locally gentle algebras.
All these vectors have algebraic motivations that we prefer to omit in the present paper to remain focused on the two-sided combinatorics of the non-kissing and non-crossing models.
Our objective is to provide interpretations of these vectors in terms of the surface~$\surface_{\bar Q}$.

\subsection{$\b{g}$-vectors, $\b{c}$-vectors, and $\b{d}$-vectors for walks}
\label{subsec:gcdVectorsWalks}

Let~$\bar Q = (Q,I)$ be a locally gentle bound quiver, and let~$(\b{e}_a)_{a \in Q_0}$ denote the standard basis of~$\R^{Q_0}$.

\begin{definition}
For a multiset~$V \eqdef \{a_1, \dots, a_m\}$ of vertices of~$Q_0$, we denote by~$\multiplicityVector_V \in \R^{Q_0}$ the \defn{multiplicity vector} of~$V$, defined by
\[
\multiplicityVector_V \eqdef \sum_{i \in [m]} \b{e}_{a_i} = \sum_{a \in Q_0} |\set{i \in [m]}{a_i = a}| \, \b{e}_a.
\]
For a string~$\sigma$ of~$\bar Q$, we define~$\multiplicityVector_\sigma \eqdef \multiplicityVector_{V(\sigma)}$ where~$V(\sigma)$ is the multiset of vertices of~$\sigma$. 
\end{definition}

We now recall two definitions from~\cite{PaluPilaudPlamondon} that still make sense for locally gentle algebras.

\begin{definition}
\label{def:gVectorsWalks}
For any walk~$\omega$ of~$\bar Q$, we let~$\peaks{\omega}$ (resp.~by~$\deeps{\omega}$) be the multiset of vertices of~$Q_0$ corresponding to the peaks (resp.~deeps) of~$\omega$.
The \defn{$\b{g}$-vector} of the walk~$\omega$ is the vector~${\gvector{\omega} \in \R^{Q_0}}$ defined by
\[
\gvector{\omega} \eqdef \multiplicityVector_{\peaks{\omega}} - \multiplicityVector_{\deeps{\omega}}.
\]
For a set~$\Omega$ of walks, we let~$\gvectors{\Omega} \eqdef \set{\gvector{\omega}}{\omega \in \Omega}$.
Observe that, if~$\omega$ is a straight walk, then~$\gvector{\omega} = 0$. Moreover, since any infinite walk~$\omega$ is eventually cyclic, the coordinates of~$\gvector{\omega}$ are always finite.
\end{definition}

\begin{definition}
\label{def:cVectorsWalks}
Let~$F \in \RNKC$ be a non-kissing facet and $\omega$ be a walk in~$F$.
Define
\[
\distinguishedSign{\omega}{F} \eqdef 
\begin{cases}
\phantom{-}1 & \text{if~$\distinguishedString{\omega}{F}$ is a top substring of~$\omega$ (\ie if~$\distinguishedArrows{\omega}{F}$ point outside),} \\
-1 & \text{if~$\distinguishedString{\omega}{F}$ is a bottom substring of~$\omega$ (\ie if~$\distinguishedArrows{\omega}{F}$ point inside),}
\end{cases}
\]
where~$\distinguishedString{\omega}{F}$ and~$\distinguishedArrows{\omega}{F}$ are the distinguished arrows of~$\omega$ in~$F$ (see Definition~\ref{def:distinguishedWalksAndArrows}).
The \defn{$\b{c}$-vector} of a walk~$\omega$ of~$F$ is the vector~${\cvector{\omega}{F} \in \R^{Q_0}}$ defined by
\[
\cvector{\omega}{F} \eqdef \distinguishedSign{\omega}{F} \, \multiplicityVector_{\distinguishedString{\omega}{F}}.
\]
We let~$\cvectors{F} \eqdef \set{\cvector{\omega}{F}}{\omega \in F}$ be the set of $\b{c}$-vectors of a non-kissing facet~$F \in \RNKC$, and~$\allcvectors \eqdef \bigcup_F \cvectors{F}$ be the set of all $\b{c}$-vectors of all non-kissing facets~${F \in \RNKC}$.
\end{definition}

We now introduce another algebraically motivated family of vectors that was omitted in~\cite{PaluPilaudPlamondon}.
We first need to introduce the kissing number.

\begin{definition}
\label{def:kissingNumber}
For two walks~$\omega$ and~$\omega'$ of~$\bar Q$, denote by~$\kn(\omega,\omega')$ the number of times that~$\omega$ kisses $\omega'$.
Note that kisses are counted with multiplicities if~$\omega$ or~$\omega'$ pass twice through the same substring.
Observe also that~$\kn(\omega,\omega')$ is finite since any infinite walk is eventually cyclic.
The \defn{kissing number} of~$\omega$ and~$\omega'$ is~$\KN(\omega,\omega') \eqdef \kn(\omega,\omega') + \kn(\omega',\omega)$.
\end{definition}

\begin{definition}
\label{def:dVectorsWalks}
The \defn{$\b{d}$-vector} of a walk~$\omega$ is the vector~$\dvector{\omega} \in \R^{Q_0}$ defined by
\[
\dvector{\omega} \eqdef 
\begin{cases}
- \b{e}_a & \text{if $\omega$ is the deep walk~$a_\deep$ with a unique deep at~$a$,} \\
\displaystyle \sum_{a \in Q_0} \kn(\omega, a_\deep) \, \b{e}_a & \text{otherwise.}
\end{cases}
\]
For a set~$\Omega$ of walks, we let~$\dvectors{\Omega} \eqdef \set{\dvector{\omega}}{\omega \in \Omega}$.
\end{definition}

\begin{example}
\label{exm:gcdVectorsWalks}
For instance, the $\b{g}$-, $\b{c}$-, and $\b{d}$-matrices of the facet illustrated in \fref{fig:gcdVectors} are given by
\[
\gvectors{F} = 
\begin{blockarray}{ccc}
	{\color{blue} \bullet} & {\color{violet} \bullet} & \\[-.1cm]
	\begin{block}{[cc]c}
	1 & 0 & \!\!\!\circled{1} \\
	-1\!\! & \!\!-1 & \!\!\!\circled{2} \\
	\end{block}
\end{blockarray}
\qquad\qquad
\cvectors{F} = 
\begin{blockarray}{ccc}
	{\color{blue} \bullet} & {\color{violet} \bullet} & \\[-.1cm]
	\begin{block}{[cc]c}
	1 & \!\!-1 & \!\!\!\circled{1} \\
	0 & \!\!-1 & \!\!\!\circled{2} \\
	\end{block}
\end{blockarray}
\qquad\qquad
\dvectors{F} = 
\begin{blockarray}{ccc}
	{\color{blue} \bullet} & {\color{violet} \bullet} & \\[-.1cm]
	\begin{block}{[cc]c}
	1 & 0 & \!\!\!\circled{1} \\
	0 & -1 & \!\!\!\circled{2} \\
	\end{block}
\end{blockarray}
\]
(\ie columns are the $\b{g}$-, $\b{c}$-, $\b{d}$-vectors of the elements of the facet).

\begin{figure}[t]
	\capstart
	\centerline{\includegraphics[scale=.7]{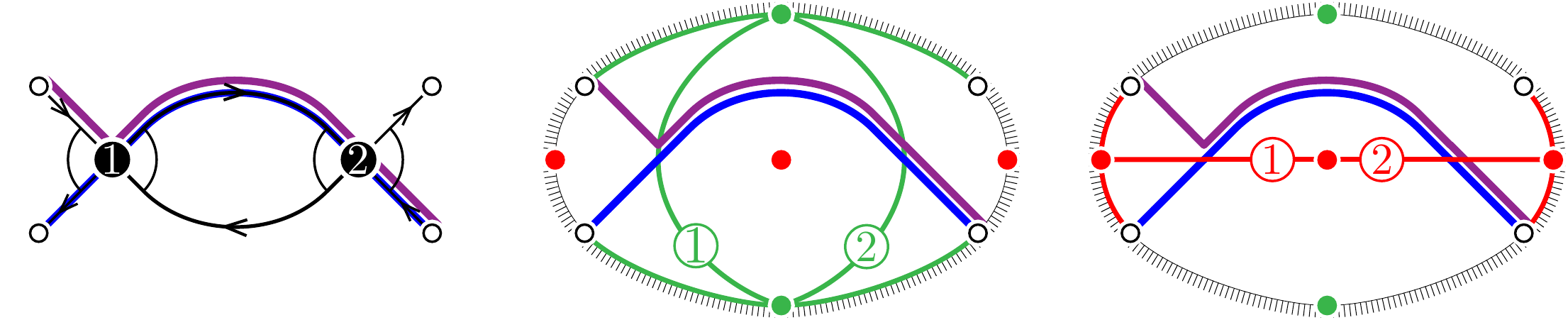}}
	\caption{A non-kissing facet of~$\RNKC$ (left), an accordion facet~$\RAC$ (middle) and a slalom facet~$\RSC$ (right) that correspond via the bijections of Theorem~\ref{thm:complexesCoincide} and Proposition~\ref{prop:accordionsSlaloms}. The~$\b{g}$-, $\b{c}$-, and $\b{d}$-matrices of this facet are given in Example~\ref{exm:gcdVectorsWalks}.}
	\label{fig:gcdVectors}
\end{figure}
\end{example}

\subsection{$\b{g}$-vectors, $\b{c}$-vectors, and~$\b{d}$-vectors on the surface}
\label{subsec:gcdVectorsSurface}

Our objective is now to interpret the three families of vectors of Section~\ref{subsec:gcdVectorsWalks} in terms of accordions and slaloms on the surface.
According to Remark~\ref{rem:propertiesSurface}\,(\eqref{item:edges}), each vertex~$a$ of~$Q$ corresponds to an edge~$\edgeof(a)$ in the dissection~$\dissection_{\bar Q}$ and  to an edge~$\dualedgeof(a) = \edgeof(a)\dual$ in the dual dissection~${\dissection\dual\!\!_{\bar Q}}$.
Recall moreover from Lemma~\ref{lemm:curveOfAWalkIsAccordion} and Proposition~\ref{prop:accordionsSlaloms} that each non-kissing walk~$\omega$ in~$\bar Q$ corresponds to a curve~$\curveof(\omega)$ that is both a $\dissection$-accordion and a $\dissection\dual$-slalom.
The following three statements translate the definitions of $\b{g}$-, $\b{c}$- and $\b{d}$-vectors on the surface.
The proofs are left to the reader.
We note that these interpretations were already used in~\cite{MannevillePilaud-accordion, GarverMcConville} when the surface is a disk.
When~$\dissection$ is a triangulation of~$\surface$, these interpretations coincide with those given in terms of triangulations and laminations for cluster algebras from surfaces by S.~Fomin and D.~Thurston~\cite{FominThurston}.

\begin{proposition}
\label{prop:gVectorsSurface}
For a walk~$\omega$ of~$\bar Q$ and a vertex~$a \in Q_0$, the $a$-th entry of the $\b{g}$-vector~$\gvector{\omega}$ is given~by:
\begin{enumerate}[(i)]
\item The number of times that the $\dissection$-accordion~$\curveof(\omega)$ positively crosses the edge~$\edgeof(a)$ of~$\dissection$ minus the number of times it negatively crosses the edge~$\edgeof(a)$ of~$\dissection$. Here, the crossing is positive (resp.~negative) if the two angles of~$\dissection$ crossed by~$\curveof(\alpha)$ just before and after~$\edgeof(a)$ form a~$\SSS$ (resp.~a~$\ZZZ$).
We note that some crossings are neither positive nor negative.
\item The number of times that the $\dissection\dual$-slalom~$\curveof(\omega)$ positively crosses the edge~$\dualedgeof(a)$ of~$\dissection\dual$ minus the number of times it negatively crosses the edge~$\dualedgeof(a)$ of~$\dissection\dual$. Here, the crossing is positive when~$\curveof(\omega)$ leaves a face~$u\dual$ to enter a face~$v\dual$ with the vertex~$u$ being to its right (and thus with~$v$ being to its left), and the crossing is negative otherwise.
We note that, in this case, each crossing is either positive or negative.
\end{enumerate}
\end{proposition}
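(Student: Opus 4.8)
The plan is to prove the identity one vertex $a \in Q_0$ at a time, and to localize it at each occurrence of $a$ along $\omega$. First I would recall from Definition~\ref{def:gVectorsWalks} that the $a$-th coordinate of $\gvector{\omega}$ is the number of peaks of $\omega$ at $a$ minus the number of deeps of $\omega$ at $a$, while every remaining occurrence of $a$ (a \emph{straight pass}, where $\omega$ runs through $a$ without changing direction, see Definition~\ref{def:straightBending}) contributes nothing. By Proposition~\ref{prop:walks=arcs} together with Remark~\ref{rem:propertiesSurface}\,\eqref{item:edges}, the occurrences of $a$ as a vertex of $\omega$ are exactly the points where $\curveof(\omega)$ crosses $\edgeof(a)$ (at its middle point). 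It therefore suffices to verify, occurrence by occurrence, that a peak gives a positive crossing, a deep a negative crossing, and a straight pass a crossing that is neither, and then to sum.

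The heart of the argument is a local inspection in the lozenge model of Definition~\ref{def:surfaceQuiver}. Writing $\beta, \beta'$ for the (at most) two arrows of $\bar Q\blossom$ with source $a$, the edge $\edgeof(a)$ joins its two endpoints $v(\beta), v(\beta') \in V_{\bar Q}$ through its middle point (Remark~\ref{rem:propertiesSurface}\,\eqref{item:edges}). If the occurrence of $a$ is a straight pass, then near $a$ the walk reads $\mu\nu$ with $t(\mu) = a = s(\nu)$ and $\mu\nu \notin I$, so the gluing rule of Definition~\ref{def:surfaceQuiver}\,(ii) identifies $v(\mu) = v(\nu)$: the curve $\curveof(\omega)$ hugs a single vertex of $V_{\bar Q}$ as it crosses $\edgeof(a)$, and the two angles it follows just before and just after form neither an $\SSS$ nor a $\ZZZ$. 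If the occurrence is a peak (resp.\ a deep), then the two incident arrows both leave (resp.\ both enter) $a$, so $\curveof(\omega)$ hugs the two \emph{distinct} endpoints $v(\beta), v(\beta')$ of $\edgeof(a)$; fixing one orientation of $\surface_{\bar Q}$, one reads off the local picture that the former produces an $\SSS$ and the latter a $\ZZZ$. Summing the contributions $+1$, $-1$, $0$ over all occurrences of $a$ yields statement~(i).

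For statement~(ii) I would use Proposition~\ref{prop:accordionsSlaloms} to regard the same curve $\curveof(\omega)$ as a $\dissection\dual$-slalom, and rerun the trichotomy on the dual side. The complementary observation is that a straight pass keeps $\curveof(\omega)$ inside the single face of $\dissection\dual$ dual to the common vertex $v(\mu) = v(\nu)$, so it does not cross $\dualedgeof(a)$ at all; this is precisely why, in the slalom picture, every crossing of $\dualedgeof(a)$ is either positive or negative. A peak or a deep instead carries $\curveof(\omega)$ from the face $u\dual$ to the face $v\dual$, where $u = v(\beta)$ and $v = v(\beta')$ are the endpoints of $\edgeof(a)$ and $\{u\dual, v\dual\}$ are the two faces of $\dissection\dual$ separated by $\dualedgeof(a)$ (by duality of the dissections). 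The left/right position of $u$ then distinguishes the deep from the peak, contributing $-1$ and $+1$ respectively, so the net count is again the number of peaks minus the number of deeps.

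The one delicate point --- the part I expect to be the main obstacle --- is the sign bookkeeping. One must pin down a single orientation convention on $\surface_{\bar Q}$ and then check uniformly, across every local configuration, that ``peak $=$ $\SSS$ $=$ $u$ on the right'' and ``deep $=$ $\ZZZ$ $=$ $u$ on the left'' hold with matching signs in both the accordion and the slalom readings. This requires care in the degenerate cases: when $\beta$ or $\beta'$ is a blossom arrow, and when an endpoint of $\edgeof(a)$ is a puncture rather than a boundary vertex (so that $\curveof(\omega)$ spirals instead of hugging a marked point). Once the orientation is fixed, each of these cases is a direct reading of the lozenge picture, which is why the verification can safely be left to the reader.
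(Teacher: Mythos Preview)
Your proposal is correct and is precisely the local verification the authors have in mind: the paper leaves this proof to the reader (``The proofs are left to the reader''), and the intended argument is exactly the occurrence-by-occurrence trichotomy peak/deep/straight pass carried out in the lozenge model of Definition~\ref{def:surfaceQuiver}, together with the identification of the endpoints of~$\edgeof(a)$ from Remark~\ref{rem:propertiesSurface}\,\eqref{item:edges}. Your flagging of the orientation bookkeeping as the only point requiring care is also accurate.
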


\begin{proposition}
\label{prop:cVectorsSurface}
Consider a walk~$\omega$ of~$\bar Q$ in a non-kissing facet~$F \in \RNKC$.
Call \defn{distinguished subcurve} of~$\curveof(\omega)$ in~$\curveof(F)$ the part of~$\curveof(\omega)$ which lies strictly between the two angles of~$\dissection$ (or of~$\dissection\dual$) where~$\curveof(\omega)$ is maximal (see Remark~\ref{rem:countercurrentOrderOnSurface}).
For a vertex~$a \in Q_0$, the $a$-th entry of the $\b{c}$-vector~$\cvector{\omega}{F}$ is up to the sign given by:
\begin{enumerate}[(i)]
\item the number of crossings between the distinguished subcurve of~$\curveof(\omega)$ in~$\curveof(F)$ and the edge~$\edgeof(a)$ of the dissection~$\dissection$,
\item the number of times the distinguished subcurve of~$\curveof(\omega)$ in~$\curveof(F)$ follows the edge~$\dualedgeof(a)$ of the dual dissection~$\dissection\dual$. Here, we say that a curve~$\delta$ follows~$\dualedgeof(a)$ if~$\dualedgeof(a)$ lies on the boundary opposite to~$u$ of a face~$u\dual$ of~$\dissection\dual$ crossed by~$\delta$.
\end{enumerate}
\end{proposition}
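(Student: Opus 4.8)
The plan is to reduce both assertions to the defining formula for the $\b{c}$-vector and then read them off from the walk--accordion--slalom dictionary built earlier. By Definition~\ref{def:cVectorsWalks}, we have $\cvector{\omega}{F} = \distinguishedSign{\omega}{F}\,\multiplicityVector_{\distinguishedString{\omega}{F}}$, so that the absolute value of its $a$-th entry is exactly the number of occurrences of the vertex $a$ among the vertices of the distinguished string $\distinguishedString{\omega}{F}$. Since the statement only claims the two quantities ``up to the sign'', I would discard the scalar $\distinguishedSign{\omega}{F}$ (which is separately controlled by whether $\distinguishedString{\omega}{F}$ is a top or a bottom substring) and reduce everything to this single count of occurrences.

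First I would identify the distinguished subcurve with the curve of the distinguished string. By Remark~\ref{rem:countercurrentOrderOnSurface}, the angle of $\dissection$ (resp.\ of $\dissection\dual$) at which $\curveof(\omega)$ is maximal is precisely the angle corresponding to a distinguished arrow of $\omega$ in $F$, that is, to an arrow $\alpha$ with $\distinguishedWalk{\alpha}{F}=\omega$. Hence the two maximal angles are the angles of the two distinguished arrows that bound $\distinguishedString{\omega}{F}$, and the portion of $\curveof(\omega)$ lying strictly between them is exactly $\curveof(\distinguishedString{\omega}{F})$. This reduces both (i) and (ii) to counting the relevant features of the subcurve $\curveof(\distinguishedString{\omega}{F})$, the ``strictly between'' convention matching the endpoint convention for substrings in Definition~\ref{def:substrings}.

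For (i), I would invoke the walk--accordion dictionary of Lemma~\ref{lemm:accordionsAreCurvesOfWalks} and Proposition~\ref{prop:walks=arcs}: an accordion is recovered from its walk by reading off the successive crossings with the edges of $\dissection$, and by Remark~\ref{rem:propertiesSurface}\,\eqref{item:edges} the vertex $a \in Q_0$ corresponds to the edge $\edgeof(a)$ whose midpoint is the point $a$ on $\surface_{\bar Q}$. Thus each occurrence of $a$ in $\distinguishedString{\omega}{F}$ matches a transversal crossing of $\curveof(\distinguishedString{\omega}{F})$ with $\edgeof(a)$, giving assertion~(i). For (ii), I would transport this count through the duality $\dissection \leftrightarrow \dissection\dual$ and the coincidence of accordions and slaloms of Proposition~\ref{prop:accordionsSlaloms}. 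The crossing point of $\curveof(\omega)$ with $\edgeof(a)$ is the point $a$, which is also the unique point $\edgeof(a)\cap\dualedgeof(a)$; in the lozenge model of Definition~\ref{def:surfaceQuiver}, the curve bends there around the endpoint $u\in V$ of $\edgeof(a)$ along whose angle it passes, so that near $a$ it runs inside the single $\dissection\dual$-cell $u\dual$ and has $\dualedgeof(a)$ on the boundary of $u\dual$ opposite to $u$. Each such event is one instance of $\curveof(\distinguishedString{\omega}{F})$ following $\dualedgeof(a)$ in the sense of the statement, and conversely every following arises this way, which yields~(ii).

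The main obstacle is the local analysis underpinning (ii): one must pin down, at the degenerate-looking point $a = \edgeof(a)\cap\dualedgeof(a)$, exactly which $\dissection\dual$-cell the slalom occupies and verify the precise ``opposite to $u$'' incidence, carefully distinguishing the case where $\omega$ runs straight through $a$ (a corner of the dual walk by Koszul duality) from the case where it has a corner at $a$. One must also track multiplicities when $\distinguishedString{\omega}{F}$ passes through $a$ several times, in particular along any cyclic portion, so that the two counts in (i) and (ii) are genuinely equal rather than merely equal modulo endpoint conventions.
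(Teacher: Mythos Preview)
The paper does not actually prove this proposition: just before Proposition~\ref{prop:gVectorsSurface} it states that ``the proofs are left to the reader'' for all three surface interpretations of the $\b{g}$-, $\b{c}$- and $\b{d}$-vectors. Your outline is precisely the argument the reader is being asked to supply, and it is correct: reducing to the multiplicity vector of the distinguished string, identifying the distinguished subcurve with~$\curveof(\distinguishedString{\omega}{F})$ via Remark~\ref{rem:countercurrentOrderOnSurface}, and then reading off (i) from the accordion dictionary and (ii) from the slalom dictionary and duality is exactly what is intended. Your cautionary remarks about the local analysis at~$a$ and about multiplicities are well placed but do not hide any genuine difficulty once the lozenge picture of Definition~\ref{def:surfaceQuiver} is drawn.
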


\begin{remark}
Observe that although we gave interpretations for the $\b{g}$- and~$\b{c}$-vectors both in terms of accordions and slaloms, we consider that the slaloms naturally correspond to~$\b{g}$-vectors, while the accordions are more suited for~$\b{c}$-vectors.
\end{remark}

\begin{proposition}
\label{prop:dVectorsSurface}
Denote by~$X_\deep \eqdef \set{\curveof(a_\peak)}{a \in Q_0}$ the deep facet of the non-crossing complex obtained by slightly rotating~$\dissection_{\bar Q}$ counterclockwise as described in Example~\ref{exm:peakDeepFacets}.
For a walk~$\omega$ of~$\bar Q$ and a vertex~$a \in Q_0$, the $a$-th entry of the $\b{d}$-vector~$\dvector{\omega}$ is either~$-1$ if~$\omega = a_\peak$ or the number of crossings of the curve~$\gamma(\omega)$ with the edge~$\curveof(a_\peak)$ of~$X_\deep$ corresponding to~$a$.
\end{proposition}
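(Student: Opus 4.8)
The plan is to transport the combinatorial definition of $\dvector{\omega}$ (Definition~\ref{def:dVectorsWalks}) to the surface $\surface_{\bar Q}$ through Theorem~\ref{thm:complexesCoincide} and Proposition~\ref{prop:walks=arcs}, and then to recognise the resulting quantity as an intersection number with the reference facet $X_\deep$, exactly in the spirit of the intersection-number description of $\b{d}$-vectors for cluster algebras from surfaces (\cite{FominThurston}). Throughout I identify each walk $\omega$ with its accordion $\curveof(\omega)$, and I use the identity $\curveof(a_\peak) = \vprevious{\edgeof(a)}$ recorded in Example~\ref{exm:peakDeepFacets}, which exhibits $\curveof(a_\peak)$ as the edge $\edgeof(a)$ of $\dissection_{\bar Q}$ slightly rotated, i.e.~as the arc of $X_\deep$ labelled by $a$.

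First I would treat the generic case $\omega \neq a_\peak$. By Definition~\ref{def:dVectorsWalks} the $a$-th coordinate of $\dvector{\omega}$ is the one-sided kissing number $\kn(\omega, a_\deep)$, namely the number of finite substrings lying in $\Sigma_\top(\omega) \cap \Sigma_\bottom(a_\deep)$. Since the deep walk $a_\deep$ has a single corner, its deep at $a$, each such common substring is forced to contain $a$ at its lowest point and to run along $a_\deep$ on either side; thus the kisses of $\omega$ against $a_\deep$ are exactly the places where $\omega$ climbs over the vertex $a$ following the two branches of $a_\deep$. I would then invoke the local dictionary of Lemma~\ref{lem:nonKissing=nonCrossing} (see Figures~\ref{fig:kissings} and~\ref{fig:kissingVSCrossing}): every kiss of this form produces precisely one transverse intersection of $\curveof(\omega)$ with the rotated edge $\edgeof(a)$, and conversely each intersection of $\curveof(\omega)$ with this edge that survives the rotation towards the preceding blossoms comes from such a kiss. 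This yields the equality of $\kn(\omega, a_\deep)$ with the number of crossings of $\curveof(\omega)$ with $\curveof(a_\peak)$. All counts involved are finite by Proposition~\ref{prop:facetsAreFinite} (and by the eventual periodicity of walks), so summing over the intersection points along $\edgeof(a)$ is legitimate.

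Next I would settle the degenerate case $\omega = a_\peak$. Here $\curveof(\omega)$ is itself the arc of $X_\deep$ indexed by $a$, so ``the number of crossings of $\curveof(\omega)$ with $\curveof(a_\peak)$'' is a self-intersection and must be fixed by convention; as in \cite{FominThurston}, the arc of the reference facet receives the value $-1$ in its own coordinate. For a vertex $b \neq a$ I would check that the two descriptions of $\dvector{a_\peak}_b$ agree by using that $\curveof(a_\peak)$ and $\curveof(b_\peak)$ both lie in the non-crossing facet $X_\deep$ and are therefore disjoint, which pins down the remaining coordinates. This reproduces the value $-\b{e}_a$ prescribed by Definition~\ref{def:dVectorsWalks} for the distinguished walk at $a$.

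The hard part will be the bookkeeping in the generic step: matching the one-sided kisses $\kn(\omega, a_\deep)$ with exactly those crossings of $\curveof(\omega)$ and $\edgeof(a)$ that persist once the edge is pushed off to $\curveof(a_\peak) = \vprevious{\edgeof(a)}$. One must verify that the rotation removes precisely the intersections coming from passages of $\omega$ through $a$ that are not peaks over $a$ (straight passages and deeps, which would be recorded by $\edgeof(a)$ but not by the deep walk $a_\deep$), and that no spurious intersection is created near the blossom endpoints. Getting the direction of the rotation right---so that it is the clockwise perturbation $\curveof(a_\peak)$ that is paired with kisses against $a_\deep$, rather than the counterclockwise perturbation $\curveof(a_\deep) = \vnext{\edgeof(a)}$---is the delicate point, and it is here that the peak/deep roles and the sign of the self-intersection must be reconciled with Definition~\ref{def:dVectorsWalks}. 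Once this local model around $\edgeof(a)$ is checked, the proposition follows coordinate by coordinate.
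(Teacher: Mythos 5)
The skeleton of your plan (transport Definition~\ref{def:dVectorsWalks} through Theorem~\ref{thm:complexesCoincide} and identify kisses with crossings) is sound --- and the paper, which explicitly leaves this proof to the reader, clearly intends exactly that route --- but the step you defer as ``the delicate point'' is not delicate: it is false, and no bookkeeping will repair it. Crossings of $\curveof(\omega)$ with the \emph{clockwise} perturbation $\curveof(a_\peak)=\vprevious{\edgeof(a)}$ count, via Lemma~\ref{lem:nonKissing=nonCrossing}, the kisses between $\omega$ and $a_\peak$; since $a_\peak$ has no finite bottom substring (all its arrows point away from the peak), these are precisely the kisses $\kn(a_\peak,\omega)$, i.e.\ the places where $\omega$ \emph{dips under} $a$ along the branches of $a_\peak$ --- not the quantity $\kn(\omega,a_\deep)$ (the places where $\omega$ climbs over $a$) that Definition~\ref{def:dVectorsWalks} prescribes. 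A concrete failure in the quiver $a\xrightarrow{\alpha}b$: the bending walk $\omega$ with peak at $a$ and deep at $b$ satisfies $\kn(\omega,a_\deep)=1$ (the kiss at $\varepsilon_a$), yet $\omega$ and $a_\peak$ are non-kissing, so $\curveof(\omega)$ and $\curveof(a_\peak)$ are disjoint and your claimed equality returns $0$. Your degenerate case suffers from the same defect: Definition~\ref{def:dVectorsWalks} reserves the value $-\b{e}_a$ for the \emph{deep} walk $a_\deep$, whereas for $\omega=a_\peak$ it gives the generic value, with $a$-th entry $\kn(a_\peak,a_\deep)=1$ and (in the example above) $b$-th entry $\kn(a_\peak,b_\deep)=1$, even though $\curveof(a_\peak)$ and $\curveof(b_\peak)$ are disjoint; so the verification you say ``pins down the remaining coordinates'' would in fact expose a contradiction rather than confirm agreement.

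What a proof attempt should have detected is that the proposition as printed carries a peak/deep typo which its own wording resolves: it calls $X_\deep$ the \emph{deep} facet obtained by rotating $\dissection_{\bar Q}$ \emph{counterclockwise}, and by Example~\ref{exm:peakDeepFacets} that facet is $\set{\curveof(a_\deep)}{a\in Q_0}$ with $\curveof(a_\deep)=\vnext{\edgeof(a)}$, not $\set{\curveof(a_\peak)}{a\in Q_0}$. Once $a_\peak$ is replaced by $a_\deep$ throughout, your argument closes in two lines and the rotation bookkeeping you worried about evaporates: every arrow of $a_\deep$ points towards its deep, so $a_\deep$ admits no finite top substring, whence $\kn(a_\deep,\omega')=0$ for every walk $\omega'$ and $\kn(\omega,a_\deep)=\KN(\omega,a_\deep)$, which by the dictionary of Lemma~\ref{lem:nonKissing=nonCrossing} (applied quantitatively, kiss by kiss, as in its proof) is the minimal number of crossings of $\curveof(\omega)$ with $\curveof(a_\deep)$; the case $\omega=a_\deep$ is the conventional $-1$. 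In particular straight passages and dips of $\omega$ at $a$ never cross $\vnext{\edgeof(a)}$ in minimal position, so no intersections need to be ``removed by the rotation'' at all.
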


\subsection{Geometric properties of these vectors}
\label{subsec:geometricProperties}

We now gather some statements about $\b{g}$-, $\b{c}$-, and~$\b{d}$-vectors, essentially borrowed from~\cite[Part.~4]{PaluPilaudPlamondon}.

\begin{proposition}
\label{rem:signCoherence}
The $\b{g}$-, $\b{c}$-, and~$\b{d}$-vectors have the \defn{sign-coherence property}: for any non-kissing facet~$F \in \RNKC$,
\begin{itemize}
\item for~$a \in Q_0$, the~$a$-th coordinates of all $\b{g}$-vectors~$\gvector{\omega}$ for~$\omega \in F$ have the same sign,
\item for any~$\omega \in F$, all coordinates of the $\b{c}$-vector~$\cvector{\omega}{F}$ have the same sign,
\item for any~$\omega \in F$, all coordinates of the $\b{d}$-vector~$\dvector{\omega}$ have the same sign.
\end{itemize}
\end{proposition}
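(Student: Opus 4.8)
The plan is to treat the three families of vectors separately, since the statements for $\b{c}$- and $\b{d}$-vectors are immediate from their definitions, while only the $\b{g}$-vector statement carries real content.

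For the $\b{c}$-vectors I would simply unwind Definition~\ref{def:cVectorsWalks}: $\cvector{\omega}{F} = \distinguishedSign{\omega}{F}\,\multiplicityVector_{\distinguishedString{\omega}{F}}$ is a global scalar $\distinguishedSign{\omega}{F} \in \{\pm 1\}$ times the multiplicity vector of the distinguished substring, whose entries are all nonnegative. Hence every coordinate of $\cvector{\omega}{F}$ has sign $\distinguishedSign{\omega}{F}$ (or is zero). For the $\b{d}$-vectors, Definition~\ref{def:dVectorsWalks} gives the deep walk $a_\deep$ the vector $\dvector{a_\deep} = -\b{e}_a$, all of whose entries are $\le 0$, while every other walk $\omega$ has $\dvector{\omega} = \sum_{a \in Q_0} \kn(\omega, a_\deep)\,\b{e}_a$, whose entries are all nonnegative since they count kisses. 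In both cases the coordinates of a single vector share a sign, so these two bullets require no further work.

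The heart of the proof is the $\b{g}$-vector statement. Fix $a \in Q_0$ and recall that $\gvector{\omega}_a$ equals the number of peaks of $\omega$ at $a$ minus the number of deeps of $\omega$ at $a$ (a finite count, since the corners of an eventually cyclic walk all lie in its finite central part). The key claim I would establish is that, within a single non-kissing facet $F$, the vertex $a$ cannot be a peak of some walk $\omega \in F$ and simultaneously a deep of some walk $\omega' \in F$, even allowing $\omega = \omega'$. Indeed, a peak of $\omega$ at $a$ is exactly a length-zero top substring $\varepsilon_a \in \Sigma_\top(\omega)$, and a deep of $\omega'$ at $a$ is a length-zero bottom substring $\varepsilon_a \in \Sigma_\bottom(\omega')$; the common finite substring $\varepsilon_a$ then witnesses a kiss between $\omega$ and $\omega'$, precisely the vertex-reduced kiss explicitly allowed in Definition~\ref{def:kiss}. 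If $\omega \ne \omega'$ this contradicts that the walks of $F$ are pairwise non-kissing, and if $\omega = \omega'$ it is a self-kiss, which never occurs in a face of the non-kissing complex. Granting the claim, at each vertex $a$ either no walk of $F$ has a deep at $a$, whence $\gvector{\omega}_a \ge 0$ for all $\omega \in F$, or no walk of $F$ has a peak at $a$, whence $\gvector{\omega}_a \le 0$ for all $\omega \in F$; either way the $a$-th coordinates are sign-coherent.

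The only genuine obstacle is conceptual rather than computational, namely recognizing that a peak and a deep at a common vertex constitute a kiss reduced to that vertex; once this is seen, the argument is immediate and uniform across finite and infinite walks. One could alternatively argue geometrically through Proposition~\ref{prop:gVectorsSurface}, reading sign-coherence off the fact that the accordions (or slaloms) of a facet are pairwise non-crossing near the edge $\edgeof(a)$, but the combinatorial kissing argument above is shorter and self-contained, so I would present that one.
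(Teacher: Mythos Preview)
Your proof is correct and follows essentially the same approach as the paper: the $\b{c}$- and $\b{d}$-vector cases are dismissed as immediate from the definitions, and the $\b{g}$-vector case rests on the observation that a peak of~$\omega$ at~$a$ together with a deep of~$\omega'$ at~$a$ constitutes a kiss (reduced to the vertex~$a$), contradicting membership in a non-kissing facet. Your write-up is more explicit than the paper's one-line version, but the argument is the same.
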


\begin{proof}
For the $\b{g}$-vectors, observe that two walks~$\omega$ and~$\omega'$ whose $\b{g}$-vectors~$\gvector{\omega}$ and~$\gvector{\omega'}$ have opposite signs on their~$a$-th coordinate automatically kiss at~$a$.
For $\b{c}$- and $\b{d}$-vectors, the statement is immediate from the definition.
\end{proof}

\begin{proposition}
\label{prop:gvectorscvectorsDualBases}
For any non-kissing facet~$F \in \RNKC$, the set of $\b{g}$-vectors~$\gvectors{F}$ and the set of $\b{c}$-vectors~$\cvectors{F}$ form dual bases. 
\end{proposition}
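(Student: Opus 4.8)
The plan is to show that the $|Q_0|\times|Q_0|$ matrix with entries $\dotprod{\gvector{\omega}}{\cvector{\omega'}{F}}$, indexed by the walks $\omega,\omega'\in F$, is the identity. By Proposition~\ref{prop:purity} a facet $F$ of~$\RNKC$ consists of exactly $|Q_0|$ (bending) walks, which matches $\dim\R^{Q_0}=|Q_0|$. Hence, writing $G$ for the matrix whose columns are the $\gvector{\omega}$ and $C$ for the matrix whose columns are the $\cvector{\omega}{F}$, the identity $G^{\mathrm t}C = \mathrm{Id}$ would force both matrices to be invertible; their columns would then be bases of $\R^{Q_0}$, and $C=(G^{\mathrm t})^{-1}$ would express exactly that $\gvectors{F}$ and $\cvectors{F}$ are dual bases. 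So the whole statement reduces to proving $\dotprod{\gvector{\omega}}{\cvector{\omega'}{F}}=\delta_{\omega,\omega'}$ for all $\omega,\omega'\in F$.

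Unfolding Definitions~\ref{def:gVectorsWalks} and~\ref{def:cVectorsWalks},
\[
\dotprod{\gvector{\omega}}{\cvector{\omega'}{F}} = \distinguishedSign{\omega'}{F}\,\dotprod{\multiplicityVector_{\peaks{\omega}} - \multiplicityVector_{\deeps{\omega}}}{\multiplicityVector_{\distinguishedString{\omega'}{F}}},
\]
so this scalar product is $\distinguishedSign{\omega'}{F}$ times the number of peaks of~$\omega$ minus the number of deeps of~$\omega$ lying on the distinguished substring~$\distinguishedString{\omega'}{F}$, each counted with the multiplicity with which the corresponding vertex occurs along~$\distinguishedString{\omega'}{F}$. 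For the diagonal $\omega=\omega'$, the distinguished substring~$\distinguishedString{\omega}{F}$ is bounded by the two distinguished arrows of~$\omega$ (Proposition~\ref{prop:numberOfDistinguishedArrows}), which point consistently inward when $\distinguishedString{\omega}{F}$ is at the bottom and outward when it is on top; the corners of~$\omega$ it meets therefore alternate between peaks and deeps, and the resulting alternating sum telescopes to~$\distinguishedSign{\omega}{F}$. This yields $\dotprod{\gvector{\omega}}{\cvector{\omega}{F}} = \distinguishedSign{\omega}{F}^2 = 1$. For the off-diagonal $\omega\neq\omega'$, one uses that $\omega$ and~$\omega'$ are non-kissing (both belong to~$F$): a net imbalance of peaks over deeps of~$\omega$ along~$\distinguishedString{\omega'}{F}$ would exhibit a finite common top/bottom substring, i.e.\ a kiss, which is excluded, so the peak and deep contributions cancel and $\dotprod{\gvector{\omega}}{\cvector{\omega'}{F}}=0$. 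These are precisely the local computations carried out for finite-dimensional gentle algebras in \cite[Part~4]{PaluPilaudPlamondon}.

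The only feature of the locally gentle setting not already present in the gentle case is that walks may be infinite. This causes no difficulty: the distinguished substring~$\distinguishedString{\omega'}{F}$ always lies between the two distinguished arrows of Proposition~\ref{prop:numberOfDistinguishedArrows} and is thus finite, while every $\gvector{\omega}$ has finite support because each walk is eventually cyclic (Definition~\ref{def:gVectorsWalks}); consequently both the diagonal and the off-diagonal entries are finite sums depending only on a bounded portion of each walk, and the telescoping and cancellation arguments above are unaffected by the cyclic tails. I expect the main obstacle to be the off-diagonal vanishing: one must verify carefully that the non-kissing hypothesis genuinely forces the peak/deep contributions of~$\omega$ along~$\distinguishedString{\omega'}{F}$ to cancel, and that this local cancellation persists when $\omega$ or~$\omega'$ spirals infinitely around a puncture — the point at which the finiteness of the distinguished substring and the eventual periodicity of the walks must be used together, and where the argument of \cite{PaluPilaudPlamondon} requires the little extra care alluded to at the start of Section~\ref{sec:propertiesOfComplexes}.
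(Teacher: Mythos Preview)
Your proposal is correct and takes essentially the same approach as the paper: the paper's proof is a one-line reduction to \cite[Proposition~4.16]{PaluPilaudPlamondon}, noting only that the distinguished substring is always finite, and you have correctly identified this as the single new ingredient needed in the locally gentle setting while sketching the same scalar-product computation carried out there. Your expansion of the diagonal and off-diagonal cases matches the argument in that reference; the hedging in your final paragraph is unnecessary, since the finiteness of~$\distinguishedString{\omega'}{F}$ is exactly what makes the cited proof go through verbatim.
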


\begin{proof}
Noting that the distinguished string of a walk in a given non-kissing facet is always a finite substring, the proof of~\cite[Proposition 4.16]{PaluPilaudPlamondon} applies \emph{verbatim}.
\end{proof}

\begin{example}
For instance, one can check that~$\gvectors{F} \cdot \transpose{\cvectors{F}} = 1$ in Example~\ref{exm:gcdVectorsWalks}.
\end{example}

Finally, we borrow the following geometric statement from~\cite{PaluPilaudPlamondon}.
We refer to \cite{Ziegler-polytopes} for basic notions on polyhedral geometry (polytopes and normal fans).

\begin{theorem}
\label{thm:fanAssociahedron}
Let~$\bar Q$ be a locally gentle bound quiver such that~$\NKC$ is finite. The collection of cones
\[
\gvectorFan \eqdef \bigset{\R_{\ge0} \gvectors{F}}{F \text{ non-kissing face of } \RNKC}.
\]
is a complete simplicial fan, called the \defn{$\b{g}$-vector fan} of~$\bar Q$.
Moreover, $\gvectorFan$ is the normal fan of the \defn{$\bar Q$-associahedron} defined equivalently as
\begin{itemize}
\item the convex hull of the points~$\point{F} \eqdef \sum_{\omega \in F} \KN(\omega) \, \cvector{\omega}{F}$ for all facets~${F \in \RNKC}$,~or
\item the intersection of the halfspaces~$\HS{\omega} \eqdef \bigset{\b{x} \in \R^{Q_0}}{\dotprod{\gvector{\omega}}{\b{x}} \le \KN(\omega)}$ for all walks~$\omega$ on~$\bar Q$,
\end{itemize}
where $\KN(\omega) \eqdef \sum_{\omega'} \KN(\omega,\omega')$ is the (finite) sum of the kissing numbers of Definition~\ref{def:kissingNumber}.
\end{theorem}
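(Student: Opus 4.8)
The plan is to follow the realization strategy of~\cite[Part~4]{PaluPilaudPlamondon}, exploiting the fact that the hypothesis ``$\NKC$ is finite'' collapses the whole combinatorial picture to a finite one: there are then only finitely many walks on~$\bar Q$, so each total kissing number~$\KN(\omega) = \sum_{\omega'} \KN(\omega,\omega')$ is a finite sum, and each~$\b{g}$-vector~$\gvector{\omega}$ has finite coordinates (as already noted in Definition~\ref{def:gVectorsWalks}, since an infinite walk is eventually cyclic). Thus the only genuinely new feature compared to the gentle case is the presence of \emph{infinite} (eventually cyclic) bending walks among the elements of a facet, and at each step I would check that these behave exactly like the finite ones. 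The argument splits into two parts: first, that~$\gvectorFan$ is a complete simplicial fan; second, that it is the normal fan of the two coinciding descriptions of the associahedron~$\Asso$.

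For the first part I would apply the standard fan criterion for pseudomanifolds of cones. By Proposition~\ref{prop:purity} the reduced complex~$\RNKC$ is pure of dimension~$|Q_0|$, and by Corollary~\ref{coro:thin} it is thin, hence a finite pseudomanifold without boundary. For each facet~$F$ the set~$\gvectors{F}$ is a basis of~$\R^{Q_0}$ by Proposition~\ref{prop:gvectorscvectorsDualBases}, so~$\R_{\ge0}\gvectors{F}$ is a full-dimensional simplicial cone. The key local input is the flip of Proposition~\ref{prop:flip}: flipping a bending walk~$\omega = \rho\sigma\tau$ to~$\omega' = \rho'\sigma\tau'$ produces an adjacent facet~$F'$ with~$F \cap F' = F \ssm \{\omega\}$ containing the walks~$\mu = \rho'\sigma\tau$ and~$\nu = \rho\sigma\tau'$, and comparing peaks and deeps across the four recombined pieces gives the exchange relation
\[
\gvector{\omega} + \gvector{\omega'} = \gvector{\mu} + \gvector{\nu}.
\]
Since the right-hand side lies in~$\R_{\ge0}\gvectors{F \cap F'}$, the vectors~$\gvector{\omega}$ and~$\gvector{\omega'}$ straddle the hyperplane spanned by the wall, so the cones~$\R_{\ge0}\gvectors{F}$ and~$\R_{\ge0}\gvectors{F'}$ lie on opposite sides of it. Together with sign-coherence (Proposition~\ref{rem:signCoherence}) guaranteeing pointedness, these opposite-side relations and the pseudomanifold property are exactly the hypotheses of the fan criterion used in~\cite{PaluPilaudPlamondon}, yielding that~$\gvectorFan$ is a complete simplicial fan.

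For the second part I would realize~$\gvectorFan$ as a normal fan by certifying that the heights~$\KN(\omega)$ define a convex piecewise-linear function. Using that~$\gvectors{F}$ and~$\cvectors{F}$ are dual bases (Proposition~\ref{prop:gvectorscvectorsDualBases}), for~$\omega \in F$ one computes
\[
\dotprod{\gvector{\omega}}{\point{F}} = \sum_{\omega' \in F} \KN(\omega') \, \dotprod{\gvector{\omega}}{\cvector{\omega'}{F}} = \KN(\omega),
\]
so~$\point{F}$ is the unique intersection of the~$|Q_0|$ hyperplanes~$\set{\b{x}}{\dotprod{\gvector{\omega}}{\b{x}} = \KN(\omega)}$ for~$\omega \in F$, hence coincides with the vertex attached to the maximal cone~$\R_{\ge0}\gvectors{F}$. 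By the standard polytopality lemma (a complete simplicial fan carrying a strictly convex height function is the normal fan of a polytope, described simultaneously as the convex hull of its vertices and the intersection of its facet-defining halfspaces), the theorem now reduces to the \emph{convexity inequality} across each wall, which the exchange relation above turns into the purely numerical statement
\[
\KN(\omega) + \KN(\omega') > \KN(\mu) + \KN(\nu)
\]
for every flip. Note also that straight walks contribute only redundant halfspaces, since~$\gvector{\omega} = 0$ and~$\HS{\omega} = \R^{Q_0}$ there, so the active inequalities are indexed by the bending walks, matching the rays of~$\gvectorFan$.

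The main obstacle is precisely this wall inequality on kissing numbers: unlike the equalities above it is not formal, and it is where the combinatorics of kissing genuinely enters, through a double-counting of the kisses reorganized by the flip~$\{\omega,\omega'\} \leftrightarrow \{\mu,\nu\}$. I expect the argument of~\cite[Part~4]{PaluPilaudPlamondon} to transfer once I confirm that this counting is unaffected by the infinite bending walks that can occur in the locally gentle setting --- which holds because finiteness of~$\NKC$ keeps every kissing number and every~$\b{g}$-vector finite, so the comparison reduces, wall by wall, to the finite bookkeeping already controlled in the gentle case.
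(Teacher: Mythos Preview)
Your proposal is correct and follows exactly the approach the paper intends: the paper's own proof is the single sentence ``The proof is identical to that of Theorem~4.17 and Theorem~4.27 of~\cite{PaluPilaudPlamondon}'', and what you have written is a faithful unpacking of that argument (fan criterion via the linear exchange relation~$\gvector{\omega}+\gvector{\omega'}=\gvector{\mu}+\gvector{\nu}$, duality of $\b{g}$- and $\b{c}$-vectors to locate~$\point{F}$, and the strict wall inequality $\KN(\omega)+\KN(\omega')>\KN(\mu)+\KN(\nu)$ for polytopality). One small cosmetic remark: the role you assign to sign-coherence is not pointedness of the cones (that already follows from Proposition~\ref{prop:gvectorscvectorsDualBases}) but rather the identification of an initial seed cone (the positive orthant for the peak facet) needed to launch the fan criterion.
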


\begin{proof}
The proof is identical to that of Theorem 4.17 and Theorem 4.27 of~\cite{PaluPilaudPlamondon}.
\end{proof}

\fref{fig:fanAssociahedron} illustrates Theorem~\ref{thm:fanAssociahedron}.

\begin{figure}[p]
	\capstart
	\centerline{\includegraphics[scale=.5]{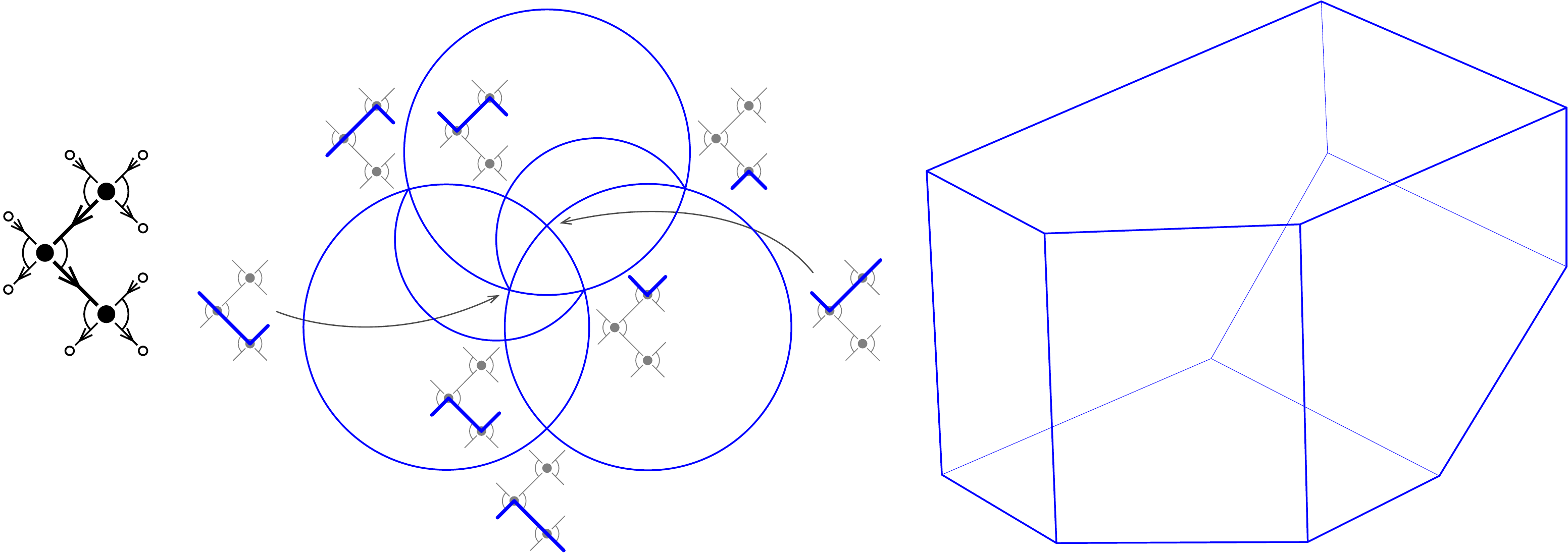}}
	\vspace{.5cm}
	\centerline{\includegraphics[scale=.5]{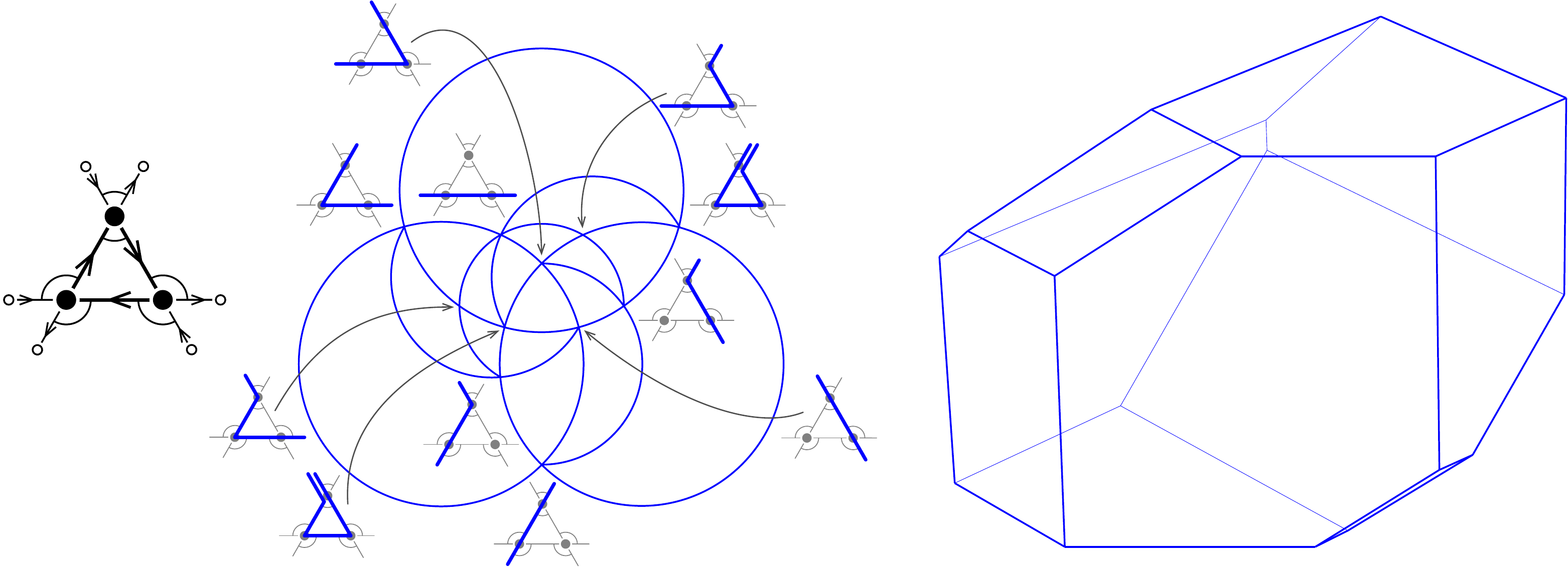}}
	\vspace{.5cm}
	\centerline{\includegraphics[scale=.5]{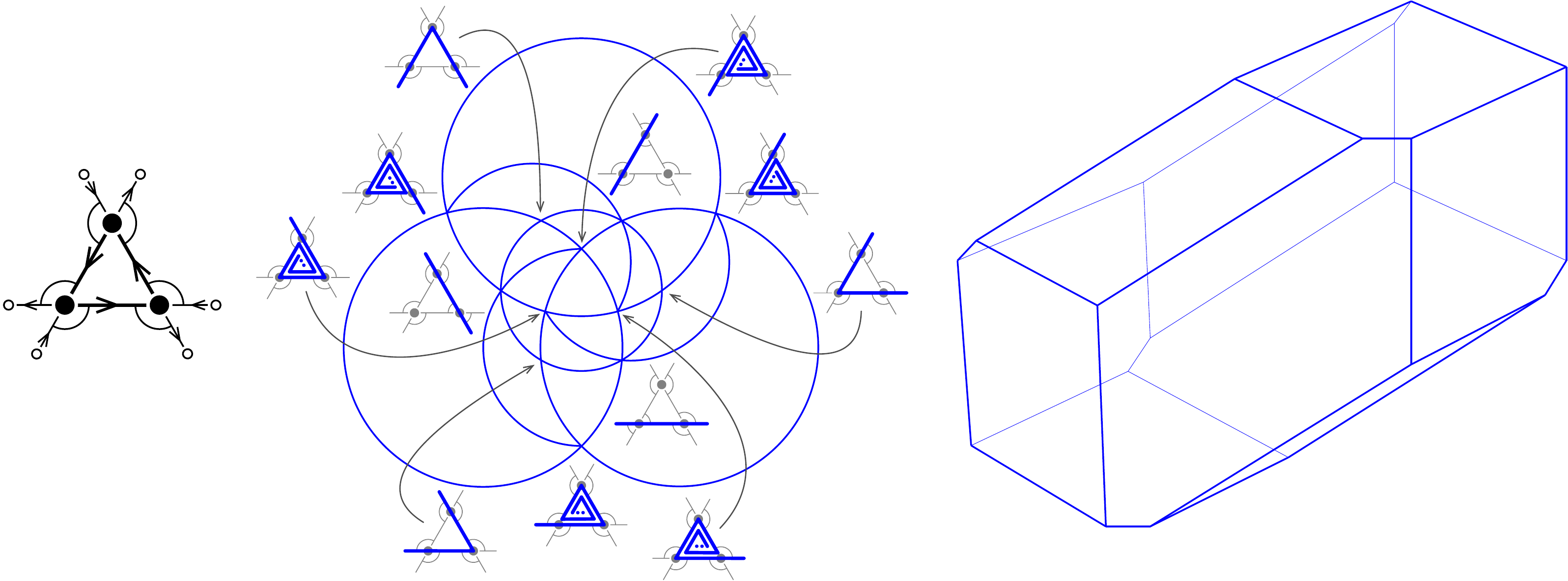}}
	\caption{The $\b{g}$-vector fan~$\gvectorFan$ (left) and the $\bar Q$-associahedron~$\Asso$ (right) of three locally gentle quivers. The first two are gentle and the illustration is borrowed from~\cite{PaluPilaudPlamondon}. The last is locally gentle but not gentle.}
	\label{fig:fanAssociahedron}
\end{figure}


\bibliographystyle{alpha}
\bibliography{accordionComplexSurfaces}
\label{sec:biblio}

\end{document}